\def\D{{\rm d}}
\def\dx{\D x}
\def\F(#1,#2,#3;#4){{}_2 F_1\left(\begin{matrix}#1,\quad #2 \\ #3\end{matrix};#4 \right)}
\def\ip<#1>{\left\langle{#1}\right\rangle}
\crefname{hypothesis}{Hypothesis}{Hypotheses}
\title{Computation of power law equilibrium measures on balls of arbitrary dimension}
\author{Timon S. Gutleb\thanks{Department of Mathematics, Imperial College London, UK
  (\email{t.gutleb18@imperial.ac.uk}).}
\and  Jos\'e A. Carrillo\thanks{Mathematical Institute, University of Oxford, UK 
  (\email{carrillo@maths.ox.ac.uk)}}
  \and Sheehan Olver\thanks{Department of Mathematics, Imperial College London, UK 
  (\email{s.olver@imperial.ac.uk}).} 
  }
\def\supp{{\rm supp}}
\begin{document}

\maketitle

\begin{abstract}
We present a numerical approach for computing attractive-repulsive power law equilibrium measures in arbitrary dimension. We prove new recurrence relationships for radial Jacobi polynomials on $d$-dimensional ball domains, providing a substantial generalization of the work started in \cite{gutleb_computing_2020} for the one-dimensional case based on recurrence relationships of Riesz potentials on arbitrary dimensional balls. Among the attractive features of the numerical method are good efficiency due to recursively generated banded and approximately banded Riesz potential operators and computational complexity independent of the dimension $d$, in stark contrast to the widely used particle swarm simulation approaches for these problems which scale catastrophically with the dimension. We present several numerical experiments to showcase the accuracy and applicability of the method and discuss how our method compares with alternative numerical approaches and conjectured analytical solutions which exist for certain special cases. Finally, we discuss how our method can be used to explore the analytically poorly understood gap formation boundary to spherical shell support.
\end{abstract}

\begin{keywords}
  equilibrium measure, attractive-repulsive, power law kernel, spectral method, disk, ball, Jacobi polynomials, Riesz potential
\end{keywords}

\begin{AMS}
  65N35, 65R20, 65K10
\end{AMS}

\section{Introduction}\label{sec:intro}
In this paper we present a novel banded and approximately banded spectral method for the computation of equilibrium measures with power law kernels on ball domains of arbitrary dimension, with computational cost independent of the dimension. Equilibrium measure problems naturally arise as the continuous limit of particle swarm systems and as such find various applications in the physics and biology, ranging from modelling cellular movement and interactions of charged particles to the flocking behaviour of birds \cite{topaz2006nonlocal,bertozziPRL,parrish1997animal,carrillo2010particle,survey2013,carrillo2018adhesion,carrillo2017review,bertozzi2015ring}. \\ Consider the problem of finding stationary states of a discrete system of $N$ particles with a pairwise interaction potential $K(|x_i-x_j|)$:
\begin{align*}
\frac{\mathrm{d}^2 x_i}{\mathrm{d}t^2} = f\left(\left\lvert\frac{\mathrm{d}x_i}{\mathrm{d}t}\right\rvert\right)\frac{\mathrm{d}x_i}{\mathrm{d}t}- \frac{1}{N}\sum_{j\neq i}\nabla K(|x_i-x_j|),
\end{align*}
where the $f$ term correponds to self-propulsion and friction, see \cite{carrillo_explicit_2017}. In the continuous limit $N\rightarrow \infty$ and in the absence of friction the evolution of the system is governed by the aggregation equation
\begin{align}\label{eq:aggregationevolution}
\rho_t = \nabla \cdot (\rho \nabla K * \rho),
\end{align}
the equilibrium states of which are characterized by non-negative densities $\rho$ such that $$\nabla K * \rho=0$$ on $\supp(\rho)$. The stable stationary states are found among the local minimizers of the energy $\int \rho(K*\rho)\dx$ with a given mass condition $\int \rho \dx = M$. An Euler-Lagrange approach, cf. \cite{balague2013dimensionality,carrillo_explicit_2017}, shows that global minimizers are among non-negative densities satisfying
\begin{align*}
K*\rho = E \quad \text{on} \quad \supp(\rho),
\end{align*}
with the constant $E/2$ given by the minimal total energy, which constitutes a more tractable problem. From a numerical perspective this minimization problem remains extremely difficult, however, as naively it would require us to minimize over the space of all positive densities of unknown radius. For the important special case problem of power law kernels, that is
\begin{align}\label{eq:powerlawkernel}
K(|x-y|) = \frac{|x-y|^\alpha}{\alpha}-\frac{|x-y|^\beta}{\beta},
\end{align}
there are some existence and uniqueness results \cite{lopes2017uniqueness,canizo2015existence,choksi2015minimizers,carrillo_preprint} and for a few specific parameter combinations even analytic solutions \cite{carrillo_explicit_2017,CHM2}. The radial symmetry of these kernels suggests that the equilibrium states are themselves radially symmetric. However, this is not true and the loss of radial symmetry for local and global minimizers is very subtle and not theoretically shown even in the case of the power-law kernels \eqref{eq:powerlawkernel}, see \cite{bertozzi2015ring}. Only very recently \cite{carrillo_preprint}, we have been able to show that the explicit radially symmetric solutions found in particular ranges in \cite{carrillo_explicit_2017} are in fact the global minimizers of the total interaction energy with $K$ given by \eqref{eq:powerlawkernel} with parameters inside the uniqueness range in \cite{lopes2017uniqueness}. The authors in \cite{carrillo_preprint} also give a generic family of kernels leading to non radially symmetric global minimizers. 

In the simplest cases, the support of $\rho$ is just a single interval in one-dimension \cite{carrillo_explicit_2017,gutleb_computing_2020}, a disk in two dimensions and a $d$-dimensional ball in higher dimensions \cite{carrillo_explicit_2017}. For different, in particular higher, powers $\alpha$ and $\beta$ different support domains for the equilibrium measure may be observed when performing particle simulations, such as two interval systems in one-dimension \cite{carrillo_explicit_2017,gutleb_computing_2020}, annuli in two dimensions and hyperspherical shells in higher dimensions \cite{carrillo_explicit_2017}. While this is known from numerical particle swarm simulations, there are as of now very few analytic results on this gap formation phenomenon and to our knowledge the only numerical results dealing with the continuous gap formation is the one-dimensional ultraspherical spectral method described in \cite{gutleb_computing_2020}. The present paper sets the foundations for studying these phenomena in higher dimensions and can be seen as an arbitrary dimensional generalization of \cite{gutleb_computing_2020}, with many of the obtained results mirroring the one-dimensional case. As such, the method is also related to \cite{olver2011computation}, in which the last author introduced a Chebyshev spectral method for the computation of potential theory equilibrium measures with kernels $K_0 = \log(|x-y|)$. 
The path to justifying similar methods in higher dimensions is fundamentally different because the one-dimensional method was able to leverage the structure of Riemann--Liouville fractional integrals. In constrast, their higher dimensional analogues called Riesz potentials are significantly more restrictive.

A lot of the material required to develop this method is dispersed across different subdisciplines and fields. Equilibrium measure problems, themselves often discussed in biology or physics publications due to interest in their applications \cite{CARRILLO201875,mogilner_non-local_1999}, have close ties to mathematical potential theory \cite{saff_logarithmic_1997}. The power law potentials used in this paper are in particular related to Riesz potentials and fractional Laplacians on balls \cite{carrillo_explicit_2017,dyda_fractional_2017}. Much of the groundwork for the modern theory of these concepts was laid by Rubin and Samko \cite{samko_fractional_1993,rubin_fractional_1996} and while some of their papers and books are available in English much of their earlier important work appears to remain untranslated from their Russian originals, see e.g. \cite{karapetyants_radial_1982,rubin_method_1986}. Furthermore, the theory of Zernike polynomials, a disk generalization of Jacobi polynomials, has been developed in tandem with applied fields such as optical abberrations research \cite{born_principles_2019,mahajan_zernike_1994,tango_circle_1977,thibos_standards_2000}, which has led to a number of different notations and conventions for radially shifted Jacobi polynomials for $d$-dimensional balls which feature prominently in our numerical method. As a result of all of the above, we provide a detailed overview of the relevant results in the early sections of the paper, while providing references to both standard monographs and important recent results.

The sections of this paper are organized as follows: Section \ref{sec:intro} serves as an introduction, giving an overview of the required concepts for power law equilibrium measures, sparse spectral methods using Jacobi polynomials in one and higher dimensions as well as the theory of fractional Laplacians and Riesz potentials. After collecting auxilliary lemmas and theorems section \ref{sec:resultsandproofs} presents new results and proofs for recurrence relationships and explicit formulae for power law integrals of Jacobi polynomials and Gaussian hypergeometric functions on $d$-dimensional balls. Section \ref{sec:methoddescription} contains a discussion of how these results may be applied to compute equilibrium measures, discusses the banded and approximately banded structures found for appropriate basis choices and finishes with a discussion of regularization and stability for this approach. Section \ref{sec:numexp} contains extensive numerical validation by means of comparison to special case analytic solutions as well as particle simulations and explores conjectures and other analytically unknown properties such as the uniqueness of solutions and the boundary of the gap forming behaviour for high parameter regimes. We close with a review of the method's properties and future research pathways.

\subsection{Power law equilibrium measures in higher dimensions}\label{sec:introequilibrium}
\begin{definition}[Equilibrium measure]
Given a kernel $K:\mathbb{R}^d \rightarrow \mathbb{R}$, $x,y\in\mathbb{R}^d$ we define the equilibrium mesaure to be the measure $\mathrm{d}\rho = \rho(x) \mathrm{d}x$ such that the expression
\begin{equation}\label{eq:energyfull}
 \frac{1}{2} \int \int K(x-y) \mathrm{d}\rho(x) \mathrm{d}\rho(y)
\end{equation}
is minimized given the mass condition
$$ \int_{\supp(\rho)}\rho(y) \mathrm{d}y = M.$$
When $M=1$ the equilibrium measure is a probability measure.
\end{definition}
Note that the energy as defined in \eqref{eq:energyfull} is a Lyapunov functional of the evolution equation stated in \eqref{eq:aggregationevolution}. For the power law kernel in \eqref{eq:powerlawkernel} one can derive the Euler-Lagrange conditions as outlined above, more details and references on this approach may be found in \cite{balague2013dimensionality,canizo2015existence,carrillo_explicit_2017}. In case the global minimizers are unique \cite{lopes2017uniqueness,carrillo_preprint}, then the equilibrium measure may instead be found by solving the following governing equation
\begin{equation*}
\frac{1}{\alpha}\int_{\supp(\rho)} |x-y|^\alpha \rho(y) dy - \frac{1}{\beta}\int_{\supp(\rho)} |x-y|^\beta \rho(y) dy = E.
\end{equation*}
In any case, they are candidates for being global minimizers of the interaction energy \eqref{eq:energyfull}.
By the radial symmetry of the problem, assuming that the measure $\rho(y)$ is supported on the ball $B_R = \{ x\in \mathbb{R}^d, |x|\leq R \}$, the $d$ dimensional governing equation takes the form \cite{carrillo_explicit_2017}:
\begin{equation*}
\frac{1}{\alpha}\int_{B_R} |x-y|^\alpha \rho(y) dy - \frac{1}{\beta}\int_{B_R} |x-y|^\beta \rho(y) dy = E,
\end{equation*}
as before subject to a mass condition
\begin{align*}
\int_{B_R} \rho(y) dy = M.
\end{align*}
Our spectral method will be developed to find the minimizer among radial densities supported on a ball. General conditions on the interaction potential leading to the radial symmetry of local/global minimizers of \eqref{eq:energyfull} have been recently shown in \cite{carrillo_preprint}.

\subsection{Jacobi polynomials and sparse spectral methods}\label{sec:jacobi}
We say polynomials $p_n(x)$ are orthogonal on a given domain $\Omega \subseteq \mathbb{R}$ with respect to some weight function $w(x)$ if they satisfy
\begin{equation*}
\int_\Omega w(x) p_n(x) p_m(x) \mathrm{d} x = c_{n,m} \delta_{n,m},
\end{equation*}
where $c_{n,m}$ are constants and $\delta_{n,m}$ is the Kronecker delta.
A complete orthogonal polynomial basis  may be used to expand sufficiently smooth functions via
\begin{equation*}
f(x) = \sum_{n=0}^\infty f_n p_n(x)= \mathbf{P}(x)^\mathsf{T} \mathbf{f},
\end{equation*}
with constant coefficients $f_n$ and
\begin{align*}
     \mathbf{P}(x) := \begin{pmatrix}
           p_{0}(x) \\
           p_{1}(x) \\
           \vdots
         \end{pmatrix} &,\hspace{5mm}
     \mathbf{f} := \begin{pmatrix}
           f_{0} \\
           f_{1} \\
           \vdots
         \end{pmatrix}.
\end{align*}
This expansion can be used to numerically approximate functions by only summing over the first $N$ number of terms. There has been much interest in the mathematical properties of such polynomials, leading to a multitude of high quality formula collections \cite{nist_2018} and monographs \cite{dunkl_orthogonal_2014}, to which we refer for details beyond the scope of this paper.\\
Various complete orthogonal polynomial bases, most notably among them the ultraspherical, Jacobi and Chebyshev polynomials, are used to numerically approximate functions and solve differential and integral equations using sparse, in particular banded, operators. A recent in-depth survey may be found in \cite{olver_slevinsky_townsend_2020}. These methods converge fast with competitive computational cost due to sparsity. Open source software implementations are available to perform such computations in the form of ApproxFun.jl \cite{noauthor_juliaapproximation/approxfun.jl_2019} and FastTransforms.jl \cite{slevinsky_conquering_2017,slevinsky_fasttransforms_2020}, which also form the framework within which the method introduced in this paper has been implemented for the numerical experiments in section \ref{sec:numexp}.

In the remainder of this section we will describe properties of one of the most ubiquitously used one-dimensional bases for these methods, the Jacobi polynomials, which also often feature as components in various higher dimensional bases, including ball domains of arbitrary dimension. The Jacobi polynomials are conventionally denoted $P_n^{(a,b)}(x)$ and are orthogonal on the interval $[-1,1]$ with respect to the weight function 
\begin{equation*}
w^{(a,b)}(x)=(1-x)^a (1+x)^b,
\end{equation*}
satisfying
\begin{align}\label{eq:orthogonalityconditionJacobi}
\int_{-1}^1(1-x)^a(1+x)^bP_n^{(a,b)}\dx = \frac{2^{a+b+1} \Gamma (a+n+1) \Gamma (b+n+1)}{n! (a+b+2 n+1) \Gamma (a+b+n+1)}  \delta _{n,m},
\end{align}
with $a	,b>-1$. The special case of $a=b$ corresponds to the ultraspherical or Gegenbauer polynomials with slightly different normalization, while the case $a=b=0$ corresponds to the Legendre polynomials \cite[Table 18.3.1]{nist_2018}. We will find the following explicit representation of the $n$-th Jacobi polynomial to be useful for our purposes \cite[18.5.7]{nist_2018}:
\begin{align}\label{eq:jacobihyper}
P^{(a,b)}_{n}\left(x\right)&=\sum_{k=0}^{n}\frac{{\left(n+a+%
b+1\right)_{k}}{\left(a+k+1\right)_{n-k}}}{k!\;(n-k)!}%
\left(\frac{x-1}{2}\right)^{k}\\ \nonumber &=\frac{{\left(a+1\right)_{n}}}{n!}\F(n+a+b+1,-n,\alpha+1; \frac{1-x}{2}),
\end{align}
where $\F(a,b,c;z)$ denotes the Gaussian hypergeometric function \cite[
15.2.1]{nist_2018} and $(\cdot)_n$ denotes the Pochhammer symbol or rising factorial \cite[5.2.5]{nist_2018}. Being classical orthogonal polynomials, the Jacobi polynomials also satisfy the following three-term recurrence relationship \cite[18.9.2]{nist_2018}:
\begin{equation}\label{eq:jacobirecclassical}
P_{n+1}^{(a,b)}(x)=(A_{n}x+B_{n})P^{(a,b)}_{n}(x)-C_{n}P^{(a,b)}_{n-1}(x),
\end{equation}
with constants
\begin{align*}
A_{n}&=\tfrac{(2n+a+b+1)(2n+a+b+2)}{2(n+1)(n+a+b+1)},\\
B_{n}&=\tfrac{(a^{2}-b^{2})(2n+a+b+1)}{2(n+1)(n+a+b+%
1)(2n+a+b)},\\
C_{n}&=\tfrac{(n+a)(n+b)(2n+a+b+2)}{(n+1)(n+a+b+1)(2%
n+a+b)}.
\end{align*}
This recurrence may be used to define a tridiagonal multiplication operator $\mathrm{X}$ acting on a function expanded in the Jacobi polynomials
\begin{equation*}
    \mathbf{P}(x)^\mathsf{T} \mathrm{X} \mathbf{f} = x f(x).
\end{equation*}
Together with element-wise addition and subtraction, as well as the raising and lowering operators defined below, these properties make up the basic toolbox for function approximation and arithmetic used in sparse spectral methods. The range of recurrences and symmetry properties of the Jacobi polynomials is extensive and we will not attempt to reproduce it fully here. A large collection of these may be found in \cite[18.3--18.18]{nist_2018} and a number of additional ladder operators are listed in \cite{olver_recurrence_2018}. Instead we selectively state some results which we will be using in the development of our method, beginning with the symmetry relation \cite[18.6.1]{nist_2018}:
\begin{equation}\label{eq:jacobisymm}
P_n^{(a,b)}(-x) = (-1)^n P_n^{(b,a)}(x).
\end{equation}
The following two relationships between parameters and degree allow one to take care of additional weight terms \cite[18.9.6]{nist_2018}:
\begin{align}
(1+x)P^{(a,b+1)}_{n}\left(x%
\right)=\tfrac{(n+1)}{(n+\frac{a}{2}+\frac{b}{2}+1)}P^{(a,b)}_{n+1}\left(x\right)+\tfrac{(n+b+1)}{(n+\frac{a}{2}+\frac{b}{2}+1)}P^{(a,%
b)}_{n}\left(x\right),\label{eq:jacobiraisingB}\\
(1-x)P^{(a+1,b)}_{n}\left(x%
\right)=\tfrac{-(n+1)}{(n+\frac{a}{2}+\frac{b}{2}+1)}P^{(a,b)}_{n+1}\left(x\right)+\tfrac{(n+a+1)}{(n+\frac{a}{2}+\frac{b}{2}+1)}P^{(a,%
b)}_{n}\left(x\right).\label{eq:jacobiraisingA}
\end{align}
Basis conversions between different parameter Jacobi polynomials may also be accomplished with recurrence relations such as \cite[18.9.5]{nist_2018}:
\begin{align}\label{eq:basisconversionjacobi1}
P^{(a,b)}_{n}\left(x\right)=\tfrac{(n+a+b+1)}{(2n+a+b+1)}P^{(%
a+1,b)}_{n}\left(x\right)-\tfrac{(n+b)}{(2n+a+b+1)}P^{(a+1,b)}_{n-1}\left(x%
\right),\\
P^{(a,b)}_{n}\left(x\right)=\tfrac{(n+a+b+1)}{(2n+a+b+1)}P^{(%
a,b+1)}_{n}\left(x\right)+\tfrac{(n+a)}{(2n+a+b+1)}P^{(a,b+1)}_{n-1}\left(x%
\right).\label{eq:basisconversionjacobi2}
\end{align}
Boundary evaluations for the Jacobi polynomials can be made using \cite[Table 18.6.1]{nist_2018}:
\begin{align}\label{eq:jacobieval1}
&P_n^{(a,b)}(1)=\tfrac{(a+1)_n}{n!},\\
&P_n^{(a,b)}(-1)=(-1)^n \tfrac{(b+1)_n}{n!}.\label{eq:jacobieval2}
\end{align}
Finally, we have derivatives \cite[18.9.15]{nist_2018}:
\begin{align}\label{eq:derivativesjac1}
&\frac{\D P_{n}^{(a,b)}}{\dx}=\tfrac{(a+b+n+1)}{2} P_{n-1}^{(a+1,b+1)}(x),\\\label{eq:derivativesjac2}
&\frac{\D^2 P_{n}^{(a,b)}}{\dx^2} = \tfrac{(a+b+n+1) (a+b+n+2)}{4}  P_{n-2}^{(a+2,b+2)}(x).
\end{align}
Sometimes applications may instead want to make use of shifted Jacobi polynomials, which via straightforward variable substitutions move the domain of orthogonality to e.g. $[0,1]$ instead. As we will see in the next section, a certain type of shifted Jacobi polynomial, namely $P_n^{(a,b)}(2r^2-1)$, $r\in(0,1)$, will feature prominently in the development of our method for equilibrium measures. We thus also reproduce variants of the above-listed properties for the shifted radial basis $P_n^{(a,b)}(2r^2-1)$ in Appendix \ref{app:radialjacobi} as a reference.

\subsection{Sparse spectral methods on unit balls of arbitrary dimension}\label{sec:spectralondisk}
Substantial progress has been made over the past years in expanding the tools described in the previous section to various higher dimensional domains such as triangles \cite{olver_recurrence_2018,olver_sparse_2019}, disks \cite{vasil_tensor_2016}, wedges \cite{olver_orthogonal_2019}, disk slices and trapeziums \cite{snowball_sparse_2020}, quadratic curves \cite{olver_orthogonal_nodate} as well as surfaces of revolution \cite{olver_orthogonal_2020} using multivariate orthogonal polynomials \cite{dunkl_orthogonal_2014}.\\
The most ubiquitous set of orthogonal polynomials on the disk are the so-called Zernike polynomials \cite{zernike1935hyperspharische,zernike_beugungstheorie_1934}, a set of bivariate orthogonal polynomials typically given in terms of polar coordinates $r$ and $\theta$. The radial part of the Zernike polynomials can be written in terms of shifted Jacobi polynomials and as a result they inherit many of their useful properties. This turns out to be the case for higher dimensional ball domains as well. Due to the Zernike polynomials' numerous applications \cite{wallis_modelling_2002} especially in the study of optical abberations \cite{born_principles_2019,rocha_effects_2007,mahajan_zernike_1994,tango_circle_1977,1992aooe,thibos_standards_2000} and atmospheric wavefronts \cite{noll_zernike_1976,roddier_atmospheric_1990}, one finds that their treatment in the scientific literature is dispersed across different fields in multiple disciplines, with normalization and ordering of the polynomials often inconsistent between different publications, see e.g. \cite{greengard_zernike_2018,noll_zernike_1976,1992aooe}.\\
The spectral method we propose in this paper uses a generalized set of polynomials on the $d$ dimensional unit ball. In two dimensions, this basis is usually referred to as the generalized Zernike polynomials \cite{dunkl_orthogonal_2014,wunsche_generalized_2005,torre_generalized_2008,aharmim_generalized_2015,area_recursive_2017}, or sometimes simply Jacobi polynomials on the unit disk $\mathbb{D} = \{x \in \mathbb{R}^2, |x| \leq 1  \}$. They are orthogonal on $\mathbb{D}$ with respect to the weight function
\begin{equation*}
w^k(x) = (1-|x|^2)^k.
\end{equation*}
The radial part \cite{vasil_tensor_2016} of the generalized Zernike polynomials reads
\begin{align*}
Q^{(k,m)}_{n}(|x|) = |x|^m P_n^{(k,m)}(2|x|^2-1),
\end{align*}
where $P_n^{(k,m)}(2|x|^2-1)$ are shifted one-dimensional Jacobi polynomials as introduced in the previous section. Typically, an additional normalization constant is also present. The angle component of the generalized Zernike polynomials corresponds to Fourier modes, see e.g. \cite{vasil_tensor_2016,dunkl_orthogonal_2014}. For radially symmetric functions only the $m=0$ modes of the basis are relevant, meaning that an appropriate basis of orthogonal polynomials on the $d$-dimensional unit ball is simply given by the polynomials
\begin{align*}
P_n^{(k,\frac{d-2}{2})}(2|x|^2-1),
\end{align*}
which are orthogonal with respect to $(1-|x|^2)^k$. For $k=0$ the above definitions are equivalent to the standard Zernike polynomials, making `generalized Zernike polynomials' an appropriate description. We note that the ordinary Zernike polynomials thus have a similar relationship to the generalized Zernike polynomials as the Legendre polynomials do to the ultraspherical or Jacobi polynomials, cf. \cite{bhatia_circle_1954,wunsche_generalized_2005,vasil_tensor_2016}.\\ As a direct consequence of the generalized Zernike polynomials' defining relationship with shifted Jacobi polynomials, they satisfy a recurrence relationship and various other properties in the radial parameter, see Appendix \ref{app:radialjacobi}.\\

\subsection{The fractional Laplacian of radial functions}\label{sec:laplace}
The Laplace operator and its non-local generalization in fractional calculus are important tools for the development of our method, so we give a brief overview of the relevant results. First, a standard definition:
\begin{definition}[Fractional Laplace operator]\label{def:fraclaplace} We define the negative fractional Laplace operator $(-\Delta)^\frac{\gamma}{2}$ for $\gamma\in(0,2)$ via the following singular integral
\begin{align*}
(-\Delta)^\frac{\gamma}{2}f(x) = \frac{2^\gamma |\Gamma(\frac{d+\gamma}{2})|}{\pi^{\frac{d}{2}}\Gamma({-\frac{\gamma}{2}})} \lim_{\epsilon\rightarrow 0^+} \frac{}{} \int\limits_{\mathbb{R}^d \backslash B_\epsilon}{\frac{f(x)-f(y)}{|x-y|^{d+\gamma}}\,dy},
\end{align*}
where $B_\epsilon = B(0,\epsilon)$ denotes a ball of radius $\epsilon$ around the origin. Equivalently \cite{kwasnicki_ten_2017} with range of validity $\gamma \in (0,d)$ we can write the fractional Laplacian as the inverse of the Riesz potential, thus denoted $(-\Delta)^{-\frac{\gamma}{2}}$:
\begin{align*}
(-\Delta)^{-\frac{\gamma}{2}}f(x)=\frac{\Gamma(\frac{d-\gamma}{2})}{\pi^{\frac{d}{2}}2^\gamma\Gamma(\frac{\gamma}{2})}\int_{\mathbb{R}^d} \frac{f(x-y)}{|y|^{d-\gamma}}\mathrm{d}y=\frac{\Gamma(\frac{d-\gamma}{2})}{\pi^{\frac{d}{2}}2^\gamma\Gamma(\frac{\gamma}{2})}\int_{\mathbb{R}^d} \frac{f(y)}{|x-y|^{d-\gamma}}\mathrm{d}y.
\end{align*}
\end{definition}
Kwaśnicki recently published a mostly self-contained and comprehensive survey of many equivalent definitions of the fractional Laplacian in \cite{kwasnicki_ten_2017}. The Riesz potential will be introduced and discussed in more detail in section \ref{sec:riesztheory}. Without stating the construction explicitly, we note that the definition of the fractional Laplace operator as a singular integral can be extended for higher parameters $\gamma \in (0,k)$ where $k$ is an even integer using certain centered differences of $f$, see \cite{stein_singular_1970,dyda_fractional_2017}.

As we will primarily be interested in functions $f(x)=f(r)$ with $r=|x|$ with $\mathrm{supp}(f)=B_R \subset \mathbb{R}^d$, we now shift focus from the whole space to ball domains and rotationally symmetric functions. For the ordinary Laplace operator $\Delta$ acting on a radially symmetric function one has
\begin{align}\label{eq:radiallaplace}
\Delta f(r) &= \frac{1}{r^{d-1}}\frac{\D}{\D r}\left(r^{d-1}f'(r)\right) \nonumber \\&=\frac{1}{r^{d-1}} \left( (d-1)r^{d-2} f'(r) + r^{d-1} f''(r) \right)  \\&=\frac{d-1}{r}f'(r) + f''(r).\nonumber
\end{align}
A closely related formula for the negative fractional Laplacian is known but due to non-locality involves several non-trivial integrals, see \cite[Lemma 7.1]{garofalo_fractional_2018}. Dyda, Kuznetsov and Kwaśnicki recently made substantial progress in the theory of fractional Laplacians in \cite{dyda_fractional_2017}, in particular also for ball domains. They proved the following result valid for $\gamma > -d$ and $x \in \mathbb{R}^d$:
\begin{align}
\nonumber &\text{If} \quad f(x):=V(x) G^{mn}_{pq} ( 
\,|x|^2; \mathbf{a}; \mathbf{b}), \quad \text{then}\\
&(-\Delta)^{\frac{\gamma}{2}} f(x)=2^{\gamma} V(x) G^{m+1,n+1}_{p+2,q+2}\Big( 
\,|x|^2;\begin{matrix}
1-\tfrac{d + 2l + \gamma}{2}, & \mathbf{a}-\tfrac{\gamma}{2}, & -\tfrac{\gamma}{2} \\ 0, & \mathbf{b}-\tfrac{\gamma}{2}, &1-\tfrac{d + 2l}{2}
\end{matrix}\,
\Big ),
\end{align}
where $G$ is the so-called Meijer G-function \cite[16.17.1]{nist_2018} defined as a significant further generalization of the generalized hypergeometric functions ${}_p F_q$ and $V(x)$ is a solid harmonic polynomial of order $l$, i.e. a solution to Laplace's equation which is polynomial in $|x|$. In particular, if we set $V(x)=1$ then $l=0$. This very general formula can be used to derive several more specialized results. Of interest to us is the following statement which holds true for the unit ball $x \in B_1$ for certain parameter ranges for $\gamma$ \cite[Theorem 3]{dyda_fractional_2017}:
\begin{align}
\nonumber &\text{If} \quad f(x) = (1 - |x|^2)^{\frac{\gamma}{2}} V(x) P^{(\frac{\gamma}{2},\frac{d}{2}+l-1)}_n(2 |x|^2 - 1), \quad \text{then}\\
 &(-\Delta )^{\frac{\gamma}{2}} f(x) = \frac{2^\gamma \Gamma (1 + \tfrac{\gamma }{2} + n) \Gamma (\tfrac{d + 2 l + \gamma }{2} + n)}{n! \, \Gamma (\tfrac{d + 2l}{2} + n)} \, V(x) P^{(\frac{\gamma}{2},\frac{d}{2}+l-1)}_n(2 |x|^2 - 1) \label{eq:eigenjacobi}, 
\end{align}
with $V(x)$ as above and where $P_n^{(a,b)}(x)$ are the Jacobi polynomials. While the authors in \cite{dyda_fractional_2017} state the range of validity of this derived formula as $\gamma >0$, their proof of it can be modified to also work when $\gamma \in (-2,0)$. As we will be needing this additional range we present a proof of this fact on the basis of the general results in \cite{dyda_fractional_2017} in Appendix \ref{app:dydaformula}. Note that \eqref{eq:eigenjacobi} implies that the Jacobi polynomials with the stated parameter choices form a complete orthogonal basis of eigenfunctions for the fractional Laplacian $(-\Delta )^{\frac{\gamma}{2}}$ weighted by $(1 - |x|^2)^{\frac{\gamma}{2}}$.
\subsection{Riesz and power law potentials on balls}\label{sec:riesztheory}
Riesz potentials pervade the wider mathematical context of equilibrium measures, with many results in Riesz potential theory going back to the original study of these constructions by the eponymous M. Riesz \cite{riesz1938integrales,arectification}. Some standard surveys which include modern and historical aspects are \cite{rafeiro_fractional_2016,landkof_foundations_1972,samko_fractional_1993,adams_function_1999}.\\
In the context of mathematical potential theory, a potential $U_K^\rho(x)$ is generally an integral operator
\begin{align*}
U_K^\rho(x) = \int_{\mathbb{R}^d}K(x,y)\mathrm{d}\rho(y) = \int_{\mathrm{supp}(\rho)}K(x,y)\mathrm{d}\rho(y)
\end{align*}
over the support of a measure $\rho$, with a given kernel $K(x,y)$ and where $\mathrm{supp}(\rho) \subseteq \mathbb{R}^d$. When $\mathrm{d}\rho(y) = \rho(y)\mathrm{d}y$ we will instead write it as acting on the density, i.e.
\begin{align*}
U_K[\rho(y)](x) = \int_{\mathbb{R}^d}K(x,y)\rho(y)\mathrm{d}y = \int_{\mathrm{supp}(\rho)}K(x,y)\rho(y)\mathrm{d}y.
\end{align*}
In many applications what one is primarily interested in are so-called convolution kernels $K(x,y) = K(x-y)$ or even more specifically kernels which depend only on the Euclidean distance $K(x,y)=K(|x-y|)$. The Riesz potential is of this kind and includes the well-known logarithmic and classical Newtonian kernels as limiting and special cases respectively, cf. \cite{landkof_foundations_1972}.
\begin{definition}[Riesz kernel]\label{def:rieszkernel}
The Riesz kernel $k_\gamma(|x-y|)$, with $x,y \in \mathbb{R}^d$ is
\begin{align*}
k_\gamma(|x-y|) = \frac{\Gamma(\frac{d-\gamma}{2})}{\pi^{\frac{d}{2}}2^\gamma\Gamma(\frac{\gamma}{2})}\frac{1}{|x-y|^{d-\gamma}}.
\end{align*}
\end{definition}
\begin{definition}[Riesz potential] \label{def:rieszpotential} The Riesz potential of a function $\rho(y)$ with $y \in \mathrm{supp}(\rho) \subseteq \mathbb{R}^d$ is defined as the integral
\begin{align*}
U_{k_\gamma}\rho(x) = (-\Delta )^{-\frac{\gamma}{2}}\rho(x) =  \frac{\Gamma(\frac{d-\gamma}{2})}{\pi^{\frac{d}{2}}2^\gamma\Gamma(\frac{\gamma}{2})}\int_{\mathrm{supp}(\rho)}\frac{1}{|x-y|^{d-\gamma}}\rho(y)\mathrm{d}y.
\end{align*}
\end{definition}
We have already seen in section \ref{sec:laplace} that the Riesz potential is the inverse of the negative fractional Laplacian given that $\gamma \in (0,d)$ \cite{adams_function_1999,kwasnicki_ten_2017}.
The normalization constant in the Riesz kernel is chosen such that the following semi-group property holds for Riesz potentials given the condition $(\gamma+\beta),\gamma, \beta \in (0,d)$, cf. \cite{landkof_foundations_1972}:
\begin{align}\label{eq:semigroup}
U_{k_{\gamma+\beta}}\rho = U_{k_\gamma}U_{k_\beta}\rho,
\end{align}
or alternatively in the notation of the fractional Laplace operator:
\begin{align*}
(-\Delta )^{-\frac{\gamma+\beta}{2}}\rho(x) = (-\Delta )^{-\frac{\gamma}{2}} (-\Delta )^{-\frac{\beta}{2}}\rho(x) = (-\Delta )^{-\frac{\beta}{2}} (-\Delta )^{-\frac{\gamma}{2}} \rho(x).
\end{align*}
Most of the standard results for Riesz potentials, such as those found in \cite{landkof_foundations_1972,samko_fractional_1993}, are only valid for parameters $\gamma \in (0,d)$. As with the fractional Laplacian we have started with definitions and properties valid for the whole space $\mathbb{R}^d$. In the context of equilibrium measures we are interested in Riesz pontentials on balls of radius $R$, i.e. $B_R = \{ x\in \mathbb{R}^d, |x|\leq R \}$ or equivalently with measures where $\mathrm{supp}(\rho)=B_R$. Such potentials have several unique properties compared to those on the whole space and were extensively covered in the seminal works of Rubin and Samko, see e.g. \cite{rubin_fractional_1996,rubin_one-dimensional_1983,samko_fractional_1993,rogosin_role_2013}. Of particular note is \cite{rubin_one-dimensional_1983}, in which Rubin showed that for $\gamma \in (0,d)$ and radially symmetric functions the Riesz potential on a ball of arbitrary dimension can be reduced to two nested one-dimensional fractional Riemann--Liouville integrals and \cite{rubin_fractional_1994,rubin_fractional_1996} in which he introduced two new types of fractional integrals on balls, c.f. \cite{edmunds_ball_2002,samko_fractional_1993}, the composition of which yields the Riesz potential.\\ The power law kernels we are interested in for the purposes of equilibrium measures are Riesz kernels on balls but in the equilibrium measure literature, c.f. \cite{carrillo_explicit_2017,gutleb_computing_2020}, the following notation with different normalization is sometimes preferred and will also be used in this paper henceforth:
\begin{definition}[Power law kernel] \label{def:powerlawkernel}
We define the power law kernel $K^\alpha(|x-y|)$, with $x,y \in \mathbb{R}^d$ by
\begin{align*}
K^\alpha(|x-y|) = \frac{|x-y|^{\alpha}}{\alpha}.
\end{align*}
\end{definition}
\begin{definition}[Power law potential] \label{def:powerlawpotential} The power law potential of a function $\rho(y)$ with $y \in \mathrm{supp}(\rho) \subseteq \mathbb{R}^d$ is defined as the integral
\begin{align*}
U^{\alpha}[\rho(y)](x) =  \frac{1}{\alpha}\int_{\mathrm{supp}(\rho)}|x-y|^{\alpha}\rho(y)\mathrm{d}y.
\end{align*}
\end{definition}
\begin{remark}
The power law kernel as defined in Definition \ref{def:powerlawkernel} corresponds to the Riesz potential in Definition \ref{def:rieszkernel} when $\alpha=\gamma-d$ with different normalization. Note that in contrast to the conventional Riesz potential the normalization in Definition \ref{def:powerlawkernel} is chosen such that a kernel $K^\alpha(|x-y|)$ will always result in an attractive potential while a negative sign $-K^\alpha(|x-y|)$ will result in a repulsive potential, even when the sign of $\alpha$ changes.
\end{remark}
Simple variable substitutions and multiplications with the appropriate constants allow for the conversion of all valid statements for Riesz potentials when $\gamma-d = \alpha \in (-d,0)$. As many interesting equilibrium measure phenomena occur exlusively in high parameter regimes, compare the high parameter results in \cite{carrillo_explicit_2017,gutleb_computing_2020}, we want to loosen the restriction of $\alpha \in (-d,0)$ or equivalently $\gamma \in (0,d)$ but whether this can be done depends fundamentally on the function the potential acts on. We will see important special cases in which this can be done in the next section.

\section{Results for power law potential of radial functions}\label{sec:resultsandproofs}
In \cite{gutleb_computing_2020} the authors derived recurrence relations for left and right-handed Riemann--Liouville fractional integrals in weighted ultraspherical polynomial bases and used them to prove a recurrence relationship for power law kernel integrals in the one-dimensional case. A natural higher dimensional analogue of Riemann--Liouville fractional integrals is found in the Riesz potential. Attempting a straightforward higher dimensional version of the approach in \cite{gutleb_computing_2020} fails because the Riemann--Liouville integrals satisfy properties, e.g. the semi-group property in \eqref{eq:semigroup}, which for Riesz potentials are only valid in very limited parameter ranges. In this section we thus take a different approach, which nevertheless leads to a true generalization of the results in \cite{gutleb_computing_2020} to arbitrary dimension $d$. Throughout this section, we write $$B(x,y) = \frac{\Gamma(x)\Gamma(y)}{\Gamma(x+y)}$$ to denote the Beta function and $B_R = \{ x \in \mathbb{R}^d, |x|\leq R \}$ for a $d$-dimensional ball of radius $R$ centered at the origin. A straightforward rescaling allows us to set the radius of the ball domain to $R = 1$ without loss of generality, which significantly simplifies our notation when dealing with Jacobi polynomials.
\subsection{Auxiliary results}
The purpose of this section is to build and gather the mathematical tools needed to prove our main theorems. The following finite sum expression for hypergeometric ${}_pF_q$ functions can be found in the extensive works of Prudnikov, Brychkov and Marichev \cite[5.3.6.3]{prudnikov2003integrals}:
\begin{align*}
\sum_{k=0}^n (-1)^k &\binom{n}{k} \frac{(1-\mathfrak{b})_k}{(1-\mathfrak{a})_k} {}_{p+m}F_q\left(\begin{matrix}
\mathbf{a}_p,  \quad \Delta(m,\mathfrak{a}-k)\\
\mathbf{b}_q
\end{matrix} ; x \right)  \\
&=\frac{(\mathfrak{b}-\mathfrak{a})_n}{(1-\mathfrak{a})_n} {}_{p+2m}F_{q+m}\left(\begin{matrix}
\mathbf{a}_p, \quad \Delta(m,\mathfrak{b}-\mathfrak{a}+n), \quad \Delta(m,\mathfrak{a}-n)\\
\mathbf{b}_q,  \quad \Delta(m,\mathfrak{a}-\mathfrak{b}-n+1)
\end{matrix}; x\right)
\end{align*}
where $\Delta(k, \mathfrak{a}) := \frac{\mathfrak{a}}{k}, \frac{\mathfrak{a}+1}{k}, ..., \frac{\mathfrak{a}+k-1}{k}$, cf \cite[p.798]{prudnikov2003integrals}. The proof of Theorem \ref{theorem:hypergeomgeneral} will be using the case $p=q=m=1$, which simplifies to:
\begin{align}\label{eq:prudnikovsum}
\sum_{k=0}^n (-1)^k \binom{n}{k} &\frac{(1-\mathfrak{b})_k}{(1-\mathfrak{a})_k} {}_{2}F_1\left( \begin{matrix} a_1,\quad \mathfrak{a}-k\\b_1 \end{matrix};x\right) \\ &= \frac{(\mathfrak{b}-\mathfrak{a})_n}{(1-\mathfrak{a})_n} {}_{3}F_{2}\left(\begin{matrix}a_1,\quad \mathfrak{b}-\mathfrak{a}+n, \quad\mathfrak{a}-n\\ b_1,\quad \mathfrak{a}-\mathfrak{b}-n+1\end{matrix}; x\right).\nonumber
\end{align} 
Next, we discuss some important lemmas which follow from results in potential theory. The following general power law integral with $x=0$ will be useful in determining certain integration constants and previously been used in the derivation of certain analytical power law equilibrium measure results, cf. \cite[Appendix A]{carrillo_explicit_2017}:
\begin{lemma}\label{lemma:ballmonomialx0}
On $B_R$ the power law potential of $(R^2-|y|^2)^{m} |y|^k$ at $x=0$ evaluates:
\begin{align*}
\int_{B_R} (R^2-|y|^2)^{m} |y|^k \mathrm{d}y = R^{k+2m+d}\frac{\pi^{\frac{d}{2}}}{\Gamma(\frac{d}{2})}B\left(\frac{k+d}{2},m+1\right).
\end{align*}
\end{lemma}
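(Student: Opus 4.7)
The plan is to reduce the $d$-dimensional integral to a one-dimensional Beta integral by exploiting the radial symmetry of the integrand. Since the integrand $(R^2-|y|^2)^m |y|^k$ depends only on $|y|$, I would first pass to spherical coordinates $y = r\omega$ with $r=|y|\in[0,R]$ and $\omega\in S^{d-1}$, so that $\mathrm{d}y = r^{d-1}\,\mathrm{d}r\,\mathrm{d}\sigma(\omega)$, and the angular integral produces the surface area of the unit sphere,
\begin{equation*}
|S^{d-1}| = \frac{2\pi^{d/2}}{\Gamma(d/2)}.
\end{equation*}
This reduces the statement to showing
\begin{equation*}
\frac{2\pi^{d/2}}{\Gamma(d/2)} \int_0^R (R^2-r^2)^m r^{k+d-1}\,\mathrm{d}r = R^{k+2m+d}\frac{\pi^{d/2}}{\Gamma(d/2)}\,B\!\left(\tfrac{k+d}{2},m+1\right).
\end{equation*}

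Next I would substitute $u=(r/R)^2$, so $r = R\sqrt{u}$ and $\mathrm{d}r = \tfrac{R}{2\sqrt{u}}\,\mathrm{d}u$, which turns the one-dimensional radial integral into
\begin{equation*}
\int_0^R (R^2-r^2)^m r^{k+d-1}\,\mathrm{d}r = \frac{R^{k+2m+d}}{2}\int_0^1 (1-u)^m u^{(k+d)/2 - 1}\,\mathrm{d}u.
\end{equation*}
The remaining integral is precisely the standard Beta function $B\bigl(\tfrac{k+d}{2},m+1\bigr)$, valid whenever $k+d>0$ and $m>-1$, which are the natural parameter ranges for the statement. Multiplying by the sphere surface area factor yields exactly the right-hand side of the lemma.

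There is no real obstacle here: the argument is a two-step reduction (spherical coordinates, then a Jacobian substitution to the Beta integral), and the only thing to keep track of is the correct collection of powers of $R$ and the $\Gamma(d/2)$ denominator coming from $|S^{d-1}|$. I would keep the calculation compact by noting at the outset that the overall homogeneity in $R$ must give $R^{k+2m+d}$, which provides a useful consistency check on the arithmetic.
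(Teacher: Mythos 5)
Your proof is correct and rests on the same underlying reduction as the paper's: collapse the radially symmetric $d$-dimensional integral to a one-dimensional radial integral and recognize it as a Beta integral. The only difference is that the paper obtains both steps by citing a tabulated formula of Prudnikov--Brychkov--Marichev (via the $d=1$ case and a reduction formula), whereas you carry out the spherical-coordinate change and the substitution $u=(r/R)^2$ explicitly, which is self-contained and arguably cleaner.
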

\begin{proof}
The $d=1$ variant of this formula may be obtained after splitting the expression into two integrals from $0$ to $R$ and $-R$ to $0$ respectively, then using the general formula \cite[3.3.2.4]{prudnikov1986integrals}:
\begin{align*}
\int_0^R (R^\mu-x^\mu)^{\beta-1} x^{\alpha-1}\dx = R^{\mu(\beta-1)+\alpha}\mu^{-1} B\left(\frac{\alpha}{\mu},\beta\right).
\end{align*}
The higher dimensional variant then follows via a reduction formula to the one-dimensional case \cite[3.3.2.4]{prudnikov1986integrals}.
\end{proof}
We prove a generalization of Lemma \ref{lemma:ballmonomialx0} for Jacobi polynomials on the unit ball.
\begin{lemma}\label{lemma:balljacobix0}
On the $d$-dimensional unit ball $B_1$ the power law potential of the weighted Jacobi polynomial $(1-|y|^2)^{m} P_n^{(a,b)}(2|y|^2-1)$ at $x=0$ evaluates:
\begin{align*}
\int_{B_1} (1-|y|^2)^{m} |y|^p  &P_n^{(a,b)}(2|y|^2-1) \mathrm{d}y \\ &=\sum_{k=0}^{n}(-1)^{n+k}\tfrac{{\left(n+a+%
b+1\right)_{k}}{\left(b+k+1\right)_{n-k}}}{k!\;(n-k)!} \frac{\pi^{\frac{d}{2}}}{\Gamma(\frac{d}{2})}B\left(\frac{2k+p+d}{2},m+1\right)\\
&= \tfrac{(-1)^n\pi ^{\frac{d}{2}+1} (\alpha +d) \Gamma \left(\frac{d}{2}+n\right)}{2 \Gamma \left(\frac{d}{2}\right)^2 \Gamma (n+1) \sin \left(\frac{\pi(\alpha +d)}{2}   \right)}{}_3F_2\left(\begin{matrix}-n,\quad n-\frac{\alpha }{2},\quad 1+\frac{\alpha+d}{2}\\2,\quad \frac{d}{2}\end{matrix};1\right).
\end{align*}
\end{lemma}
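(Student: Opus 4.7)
The plan is to reduce the integral to a finite sum of monomial integrals by expanding the Jacobi polynomial explicitly, then invoking Lemma~\ref{lemma:ballmonomialx0} term-by-term.

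First, I would apply the symmetry relation~\eqref{eq:jacobisymm} to write $P_n^{(a,b)}(2|y|^2-1) = (-1)^n P_n^{(b,a)}(1-2|y|^2)$, and then use the explicit polynomial form~\eqref{eq:jacobihyper} evaluated at $x = 1 - 2|y|^2$, so that $(x-1)/2 = -|y|^2$. This produces the finite expansion
\begin{align*}
P_n^{(a,b)}(2|y|^2 - 1) = \sum_{k=0}^{n} (-1)^{n+k}\,\frac{(n+a+b+1)_k\,(b+k+1)_{n-k}}{k!\,(n-k)!}\,|y|^{2k}.
\end{align*}
Substituting into the integral, exchanging the (finite) sum with the integration, and applying Lemma~\ref{lemma:ballmonomialx0} with $R=1$ and monomial exponent $p + 2k$ yields the first equality of the claim directly.

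For the ${}_3F_2$ representation, I would rewrite the constituent factors in the standard hypergeometric shape: using $(b+k+1)_{n-k} = (b+1)_n/(b+1)_k$, the identity $(-1)^k/(n-k)! = (-n)_k/n!$, and the Pochhammer reduction $B\!\left(\tfrac{p+d}{2}+k, m+1\right) = B\!\left(\tfrac{p+d}{2}, m+1\right)\cdot(\tfrac{p+d}{2})_k / (\tfrac{p+d}{2}+m+1)_k$, the sum collapses to a ${}_3F_2(1)$ with upper parameters $\{-n,\,n+a+b+1,\,(p+d)/2\}$ and lower parameters $\{b+1,\,(p+d)/2+m+1\}$, multiplied by an explicit prefactor. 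Matching this against the stated form (expressed through the reparametrisation $\alpha$) is a calculation in Gamma function algebra; the reflection formula $\Gamma(z)\Gamma(1-z) = \pi/\sin(\pi z)$ supplies the $\sin$-factor in the prefactor whenever a Gamma argument passes onto the negative half-line, and the duplication of $\Gamma(d/2)$ in the denominator emerges from combining the volume factor $\pi^{d/2}/\Gamma(d/2)$ of Lemma~\ref{lemma:ballmonomialx0} with the Beta function written in Gamma form.

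The main obstacle is this second step: tracking Pochhammer symbols and Gamma reflections without sign errors while confirming the prefactor matches exactly, including correct cancellation of $\Gamma$-terms against the Beta-function factor produced by Lemma~\ref{lemma:ballmonomialx0}. The first step, by contrast, is a routine substitution with only finite combinatorial bookkeeping. As a sanity check for the resulting ${}_3F_2$, the Prudnikov sum identity~\eqref{eq:prudnikovsum} could be used to verify the final form against the intermediate hypergeometric sum produced by the monomial integration.
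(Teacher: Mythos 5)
Your proposal is correct and follows essentially the same route as the paper: expand the radial Jacobi polynomial via its explicit monomial representation (the paper cites \eqref{eq:radialseriesrep}, which is exactly the expansion you derive from \eqref{eq:jacobisymm} and \eqref{eq:jacobihyper}), integrate term-by-term using Lemma \ref{lemma:ballmonomialx0}, and then recast the finite sum as a ${}_3F_2$ at unit argument via Pochhammer and Beta-function manipulations. Your treatment of the second equality is in fact more explicit than the paper's, which only asserts that the hypergeometric form ``follows by definition of the beta and ${}_3F_2$ hypergeometric function.''
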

\begin{proof}
Via the explicit representation of Jacobi polynomials in \eqref{eq:radialseriesrep} we can expand this integral into a form from which the result follows via Lemma \ref{lemma:ballmonomialx0}.
\begin{align*}
\int_{B_1} (1-|y|^2)^{m} |y|^p &P_n^{(a,b)}(2|y|^2-1) \mathrm{d}y \\&=\sum_{k=0}^{n}(-1)^{n+k}\tfrac{{\left(n+a+%
b+1\right)_{k}}{\left(b+k+1\right)_{n-k}}}{k!\;(n-k)!} \int_{B_1} (1-|y|^2)^{m} |y|^{2k+p} \mathrm{d}y \\ &= \sum_{k=0}^{n}(-1)^{n+k}\tfrac{{\left(n+a+%
b+1\right)_{k}}{\left(b+k+1\right)_{n-k}}}{k!\;(n-k)!} \frac{\pi^{\frac{d}{2}}}{\Gamma(\frac{d}{2})}B\left(\frac{2k+p+d}{2},m+1\right).
\end{align*}
The hypergeometric function variant then follows by definition of the beta and $_3 F_2$ hypergeometric function, see e.g. the properties in \cite[5.12, 16.2]{nist_2018}.
\end{proof}
The following lemma is fundamental for the method introduced in this paper.
\begin{lemma}\label{lemma1}
On $B_R$ the power law potential of the function $(R^2-|y|^2)^{-\frac{\alpha+d}{2}}$, with power $\alpha \in (-d,2-d)$, evaluates to an explicitly known constant: 
\begin{align*}
\int_{B_R} \left\lvert x-y \right\rvert^\alpha (R^2-|y|^2)^{-\frac{\alpha+d}{2}} \mathrm{d}y =\frac{\pi^{\frac{d}{2}+1}}{\Gamma(\frac{d}{2})\sin\left(\frac{(\alpha+d)\pi}{2}\right)}.
\end{align*}
\end{lemma}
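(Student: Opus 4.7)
The plan is to rewrite the integral as a Riesz potential acting on a specific radial weight, and then exploit the Dyda--Kuznetsov--Kwa\'snicki eigenfunction identity \eqref{eq:eigenjacobi} in its simplest special case $n=l=0$, $V\equiv 1$.

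First, I would reduce to the unit ball $R=1$ by the substitution $y=Rz$, $x=Rz'$. The Jacobian contributes $R^d$, the kernel $|x-y|^\alpha$ contributes $R^\alpha$, and the weight $(R^2-|y|^2)^{-(\alpha+d)/2}$ contributes $R^{-(\alpha+d)}$, so the powers of $R$ cancel and it suffices to establish the identity on $B_1$ for an arbitrary $z'\in B_1$. Setting $\gamma:=\alpha+d$, the hypothesis $\alpha\in(-d,2-d)$ becomes $\gamma\in(0,2)$, which keeps us within the validity range of the Riesz potential of Definition \ref{def:rieszpotential}. Comparing prefactors, that definition rearranges to
\begin{align*}
\int_{B_1}|z'-z|^{\alpha}(1-|z|^2)^{-\gamma/2}\,\D z = \frac{\pi^{d/2}\,2^{\gamma}\,\Gamma(\gamma/2)}{\Gamma((d-\gamma)/2)}\,(-\Delta)^{-\gamma/2}\!\left[(1-|\cdot|^2)^{-\gamma/2}\right]\!(z').
\end{align*}

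Next, I would apply \eqref{eq:eigenjacobi} with $n=0$, $l=0$, $V\equiv 1$, so that the Jacobi factor $P_0^{(\gamma/2,\,d/2-1)}$ reduces to $1$, but with the fractional-Laplacian parameter set to $-\gamma\in(-2,0)$. Since \eqref{eq:eigenjacobi} is originally stated only for $\gamma>0$, this is precisely the extended parameter window provided in Appendix \ref{app:dydaformula}. This yields
\begin{align*}
(-\Delta)^{-\gamma/2}\!\left[(1-|z'|^2)^{-\gamma/2}\right] \;=\; \frac{2^{-\gamma}\,\Gamma(1-\gamma/2)\,\Gamma((d-\gamma)/2)}{\Gamma(d/2)},
\end{align*}
which is constant in $z'$. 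Substituting this back cancels the $2^{\pm\gamma}$ and $\Gamma((d-\gamma)/2)$ factors, leaving $\pi^{d/2}\,\Gamma(\gamma/2)\,\Gamma(1-\gamma/2)/\Gamma(d/2)$. One final application of Euler's reflection formula $\Gamma(\gamma/2)\Gamma(1-\gamma/2)=\pi/\sin(\pi\gamma/2)$ collapses this to exactly $\pi^{d/2+1}/(\Gamma(d/2)\sin((\alpha+d)\pi/2))$, matching the claim.

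The main obstacle is the use of \eqref{eq:eigenjacobi} outside the range originally stated by Dyda, Kuznetsov and Kwa\'snicki, namely with a negative fractional-Laplacian parameter. This requires checking that the boundary-singular weight $(1-|z|^2)^{-\gamma/2}$ is still integrable (which holds because $\gamma/2<1$), that the Riesz-potential representation of $(-\Delta)^{-\gamma/2}$ is applicable, and that the eigenvalue expression in \eqref{eq:eigenjacobi} analytically continues into $\gamma\in(-2,0)$. These technicalities are exactly what is deferred to Appendix \ref{app:dydaformula}; once that extension is in hand, the rest of the argument is a bookkeeping exercise on gamma functions.
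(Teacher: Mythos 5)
Your argument is correct, but it is not the route the paper takes: the paper's proof of Lemma~\ref{lemma1} is a one-line citation to the classical potential-theoretic derivation via point inversion (Kelvin transforms) in the appendix of Landkof, whereas you derive the identity as the $n=l=0$, $V\equiv 1$ special case of the Dyda--Kuznetsov--Kwa\'snicki eigenfunction relation \eqref{eq:eigenjacobi}, extended to negative fractional order as in Appendix~\ref{app:dydaformula}. In effect you have rediscovered the $n=0$ instance of Theorem~\ref{theorem:diagonalformula}, which the paper proves later by exactly the mechanism you describe (rescale to $B_1$, identify the integral as a Riesz potential up to the normalizing prefactor, apply the extended eigenrelation with $\gamma=-(\alpha+d)$, and finish with Euler's reflection formula); your gamma-function bookkeeping checks out and reproduces the stated constant. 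There is no circularity, since neither \eqref{eq:eigenjacobi} nor the Appendix~\ref{app:dydaformula} extension relies on Lemma~\ref{lemma1}. The trade-off between the two approaches: the Landkof citation keeps Lemma~\ref{lemma1} as an independent, elementary input that the paper then uses to motivate Lemma~\ref{lemma2} and to anchor the inductive Laplacian-inversion argument, while your derivation unifies it with the later machinery at the cost of making it logically downstream of the Meijer-G-based extension in the appendix --- including the technical point you correctly flag, namely justifying the Riesz-potential identity for the boundary-singular weight $(1-|z|^2)^{-\gamma/2}$ in the window $\gamma\in(0,2)$, which is precisely what that appendix is for.
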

\begin{proof}
A proof of this result based on point inversion (Kelvin transforms) can be found in \cite[Appendix]{landkof_foundations_1972}.
\end{proof}
The next lemma is a direct generalization of Lemma \ref{lemma1}. While the general case proof of this generalization requires some additional work and we thus only provide a reference to its proof, it is nevertheless worthwhile for our later results to sketch how incremental generalizations of Lemma \ref{lemma1} can be obtained as was discussed in \cite{carrillo_explicit_2017}. Keeping $\alpha \in (-d,2-d)$, we note the action of the ordinary Laplace operator $\Delta$ on a power law integral with increased power $\alpha+2$:
\begin{align*}
\Delta \int_{B_R} \left\lvert x-y \right\rvert^{\alpha+2} (R^2-|y|^2)^{-\frac{\alpha+d}{2}} \mathrm{d}y &= (\alpha+d)(\alpha+2) \int_{B_R} \left\lvert x-y \right\rvert^{\alpha} (R^2-|y|^2)^{-\frac{\alpha+d}{2}} \mathrm{d}y
\\&= \frac{(\alpha+d)(\alpha+2)\pi^{\frac{d}{2}+1}}{\Gamma(\frac{d}{2})\sin\left(\frac{(\alpha+d)\pi}{2}\right)}.
\end{align*}
Inverting the Laplace operator here is straightforward and leads to
\begin{align*}
 \int_{B_R} \left\lvert x-y \right\rvert^{\alpha+2} (R^2-|y|^2)^{-\frac{\alpha+d}{2}} \mathrm{d}y = \frac{(\alpha+d)\pi^{\frac{d}{2}+1}}{\Gamma(\frac{d}{2})\sin\left(\frac{(\alpha+d)\pi}{2}\right)}\left(\frac{(\alpha+2)}{2d}|x|^2+ \frac{R^2}{2} \right),
\end{align*}
where the integration constant was fixed by evaluation at $x=0$ using Lemma \ref{lemma:ballmonomialx0} above. Finally replacing $\alpha+2$ with $\alpha$ to retain the same kernel structure leads us to the following result which is now valid for $\alpha \in (2-d,4-d)$:
\begin{align*}
\int_{B_R} \left\lvert x-y \right\rvert^{\alpha} (R^2-|y|^2)^{1-\frac{\alpha+d}{2}} \mathrm{d}y &= -\frac{(\alpha+d-2)\pi^{\frac{d}{2}+1}}{\Gamma(\frac{d}{2})\sin\left(\frac{(\alpha+d)\pi}{2}\right)}\left(\frac{\alpha}{2d}|x|^2+ \frac{R^2}{2} \right).
\end{align*}
Repeating this process we can successively find solutions for higher powers. One finds that these are all special cases of the following general lemma:
\begin{lemma}\label{lemma2}
On $B_R$ the power law potential of the function $(R^2-|y|^2)^{\ell-\frac{\alpha+d}{2}}$, with power $\alpha \in (-d,2+2\ell-d)$, evaluates as follows:
\begin{align*}
\int_{B_R} \left\lvert x-y \right\rvert^\alpha (R^2-|y|^2)^{\ell-\frac{\alpha+d}{2}} \mathrm{d}y &= \frac{\pi^{\frac{d}{2}}R^{2\ell}}{\Gamma\left(\tfrac{d}{2}\right)}B\left(\tfrac{\alpha+d}{2},\tfrac{2\ell+2-\alpha-d}{2}\right){}_2F_1\left(\begin{matrix}-\tfrac{\alpha}{2},\quad -\ell,\\ \tfrac{d}{2}\end{matrix};\tfrac{|x|^2}{R^2}\right).
\end{align*}
If furthermore $\ell \in \mathbb{N}_0$, this hypergeometric function reduces to a polynomial. Explicitly:
\begin{align*}
\int_{B_R} \left\lvert x-y \right\rvert^\alpha (R^2-|y|^2)^{\ell-\frac{\alpha+d}{2}} \mathrm{d}y &= \tfrac{(-1)^\ell \pi\Gamma \left(1+\frac{\alpha }{2}\right) \Gamma \left(\ell-\frac{\alpha }{2}\right)}{\left(\frac{d}{2}\right)_\ell} P_\ell^{(\ell-\frac{\alpha+d}{2},\frac{d-2}{2})}\left(2\frac{|x|^2}{R^2}-1\right).
\end{align*}
\end{lemma}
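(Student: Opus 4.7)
The plan is to prove the hypergeometric form by induction on $\ell\in\mathbb{N}_0$, following the Laplacian scheme sketched immediately after Lemma~\ref{lemma1}, with analytic continuation in $\alpha$ handling the boundary of the admissible parameter range; the polynomial Jacobi form then follows as a corollary via the hypergeometric representation \eqref{eq:jacobihyper}. Throughout, I would denote the left- and right-hand sides of the first equation by $I_\ell(x;\alpha)$ and $J_\ell(x;\alpha)$, respectively. The base case $\ell=0$ is immediate from Lemma~\ref{lemma1}: the hypergeometric factor ${}_2F_1(-\tfrac{\alpha}{2},0;\tfrac{d}{2};z)$ equals $1$, and Euler's reflection formula applied to $\Gamma(\tfrac{\alpha+d}{2})\Gamma(\tfrac{2-\alpha-d}{2})$ collapses the Beta prefactor of $J_0$ into the closed form produced by Lemma~\ref{lemma1}.

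For the inductive step I would assume the result at $\ell-1$ and verify it at $\ell$ by applying $\Delta_x$ to both sides. For $\alpha\in(2-d,\,2\ell+2-d)$ differentiation under the integral sign is justified since $|x-y|^{\alpha-2}$ remains locally integrable; together with the elementary identity $\Delta_x|x-y|^\alpha=\alpha(\alpha+d-2)|x-y|^{\alpha-2}$ and the exponent shift $\ell-\tfrac{\alpha+d}{2}=(\ell-1)-\tfrac{(\alpha-2)+d}{2}$, this yields $\Delta_x I_\ell(x;\alpha)=\alpha(\alpha+d-2)\,I_{\ell-1}(x;\alpha-2)$, which equals $\alpha(\alpha+d-2)\,J_{\ell-1}(x;\alpha-2)$ by the induction hypothesis. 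The main technical step is to verify the matching identity $\Delta_x J_\ell(x;\alpha)=\alpha(\alpha+d-2)\,J_{\ell-1}(x;\alpha-2)$. Writing $z=|x|^2/R^2$ and using the radial Laplacian $\Delta_x=(4/R^2)[z\,\tfrac{\D^2}{\D z^2}+\tfrac{d}{2}\,\tfrac{\D}{\D z}]$, together with a short calculation showing that the Beta-function prefactor ratio in $J_\ell$ comes out to $2\ell/[R^2(\alpha+d-2)]$ when passing from $(\ell,\alpha)$ to $(\ell-1,\alpha-2)$, the task reduces to the contiguous hypergeometric identity
\begin{equation*}
\tfrac{2z}{d}\,\tfrac{\D}{\D z}\,{}_2F_1\bigl(a,b;\tfrac{d}{2}+1;z\bigr)+{}_2F_1\bigl(a,b;\tfrac{d}{2}+1;z\bigr)={}_2F_1\bigl(a,b;\tfrac{d}{2};z\bigr),
\end{equation*}
which follows termwise from the Pochhammer manipulation $(2m+d)/[d\,(\tfrac{d}{2}+1)_m]=1/(\tfrac{d}{2})_m$. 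With that in hand, $I_\ell-J_\ell$ is a radial harmonic function on $B_R$ and therefore constant, and evaluation at $x=0$ via Lemma~\ref{lemma:ballmonomialx0} shows the constant is zero. The restriction $\alpha>2-d$ is then removed by analytic continuation across the strip $\alpha\in(-d,\,2\ell+2-d)$, on which both $I_\ell$ and $J_\ell$ are analytic since the Beta prefactor has no poles inside the strip.

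For the Jacobi reformulation when $\ell\in\mathbb{N}_0$, since $(-\ell)_n=0$ for $n>\ell$ the hypergeometric factor in $J_\ell$ is a polynomial of degree $\ell$ in $z$, so \eqref{eq:jacobihyper} (or equivalently DLMF 18.5.8) identifies it with a scalar multiple of a Jacobi polynomial $P_\ell^{(\cdot,\,(d-2)/2)}(2z-1)$ whose first parameter is pinned down by matching the upper hypergeometric parameter $n+a+b+1$ against $-\alpha/2$; combining the resulting Pochhammer factor with the Beta prefactor of $J_\ell$ via Euler's reflection formula then produces the stated Gamma-prefactor on the right-hand side. The hardest part of the proof will be setting up and verifying the contiguous ${}_2F_1$ identity that forces $J_\ell$ to obey the same Laplacian recursion as $I_\ell$; the remainder is careful but routine bookkeeping with Gamma and Pochhammer symbols.
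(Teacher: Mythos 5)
Your inductive argument is sound as far as it goes, and it is worth noting that it supplies \emph{more} than the paper does: the paper's proof of the first identity is a citation to the works of Huang et al.\ and Biler et al., with only the $\ell=1$ instance of the Laplacian bootstrap sketched in the surrounding text. I checked your two key computations and both are correct: the ratio of the prefactors of $J_\ell(\cdot;\alpha)$ and $J_{\ell-1}(\cdot;\alpha-2)$ is $R^2(\alpha+d-2)/(2\ell)$, and the contiguous identity $\tfrac{2z}{d}\tfrac{\D}{\D z}\,{}_2F_1(a,b;\tfrac{d}{2}+1;z)+{}_2F_1(a,b;\tfrac{d}{2}+1;z)={}_2F_1(a,b;\tfrac{d}{2};z)$ holds termwise, so that $\Delta_x J_\ell(x;\alpha)=\alpha(\alpha+d-2)J_{\ell-1}(x;\alpha-2)$ indeed matches the relation satisfied by $I_\ell$; the conclusion via radial harmonicity, evaluation at $x=0$ with Lemma~\ref{lemma:ballmonomialx0}, and analytic continuation in $\alpha$ is fine.

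The genuine gap is the restriction to $\ell\in\mathbb{N}_0$. The first display of Lemma~\ref{lemma2} is asserted --- and, crucially, used --- for \emph{non-integer} $\ell$: the paper states explicitly after the lemma that it ``does not require $\ell\in\mathbb{Z}$'', and Corollary~\ref{corr:colemma2} is obtained precisely by substituting $\ell=m+\tfrac{\beta-\alpha}{2}$, which is generically irrational; Theorem~\ref{theorem:hypergeomgeneral} and the whole attractive--repulsive construction rest on that corollary. An induction anchored at Lemma~\ref{lemma1} ($\ell=0$) can only ever reach integer $\ell$, because the step lowers $\ell$ by exactly $1$ and there is no base case in $(0,1)$. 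Analytic continuation in $\alpha$ does not help here, and continuation in $\ell$ from the integers would require a Carlson-type growth argument you have not supplied; so either the non-integer case must be taken from the cited literature (as the paper does) or an independent evaluation of $I_\ell$ for $\ell\in(0,1)$ must be added. A smaller point on the polynomial reformulation: carrying out your parameter matching ($n+a+b+1=-\tfrac{\alpha}{2}$, $b+1=\tfrac{d}{2}$, $n=\ell$) yields first Jacobi parameter $-\ell-\tfrac{\alpha+d}{2}$, not the $\ell-\tfrac{\alpha+d}{2}$ printed in the lemma. Your value is in fact the one consistent with the hypergeometric form (test $\ell=1$, $d=2$, $\alpha=1$: the printed expression is negative at $|x|=R$ although the integrand is positive), so you should flag the discrepancy rather than silently reproduce the printed expression, whose stated derivation in the paper additionally routes through the reflection \eqref{eq:jacobisymm}.
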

\begin{proof}
This generalization of Lemma \ref{lemma1} was derived in \cite{huang_explicit_2014,biler_nonlocal_2015,biler_barenblatt_2011} on the basis of the above-mentioned connection to the theory of fractional Laplace operators. The Jacobi polynomial variant with order dependence in the parameter follows from their explicit hypergeometric function expression in \eqref{eq:jacobihyper} combined with \eqref{eq:jacobisymm}.
\end{proof}
The fact that Lemma \ref{lemma2} does not require $\ell \in \mathbb{Z}$ means that we can separate the power of the weight from that of the kernel, a fact we make explicit in the following corollary:
\begin{corollary}\label{corr:colemma2}
On $B_R$ the power law potential of the function $(R^2-|y|^2)^{m-\frac{\alpha+d}{2}}$ with power $\alpha \in(-d,2+2m-d)$, $m\in\mathbb{N}_0$ and $\beta>-d$, evaluates as follows:
\begin{align*}
\int_{B_R} \left\lvert x-y \right\rvert^\beta &(R^2-|y|^2)^{m-\frac{\alpha+d}{2}} \mathrm{d}y =\\ &\frac{\pi^{\frac{d}{2}}R^{2m+\beta-\alpha}}{\Gamma\left(\tfrac{d}{2}\right)}B\left(\tfrac{\beta+d}{2},\tfrac{2m+2-\alpha-d}{2}\right){}_2F_1\left(\begin{matrix}-\tfrac{\beta}{2},\quad -m-\tfrac{\beta-\alpha}{2} \\ \tfrac{d}{2}\end{matrix}; \tfrac{|x|^2}{R^2}\right).
\end{align*}
\begin{proof}
Starting with $\int_{B_R} \left\lvert x-y \right\rvert^\beta (R^2-|y|^2)^{\ell-\frac{\beta+d}{2}} \mathrm{d}y$ and setting $\ell=m+\frac{\beta-\alpha}{2}$, the result follows directly from Lemma \ref{lemma2}.
\end{proof}
\end{corollary}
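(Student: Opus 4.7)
The plan is to reduce Corollary \ref{corr:colemma2} to Lemma \ref{lemma2} by a single substitution of the auxiliary index $\ell$, exploiting the fact that Lemma \ref{lemma2} holds for non-integer $\ell$ in general (the integer case was only used to obtain the Jacobi polynomial closed form).

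First I would rename the exponent in the kernel in Lemma \ref{lemma2} to avoid a clash of notation: writing Lemma \ref{lemma2} with the kernel power called $\beta$ (instead of $\alpha$) and the free parameter still called $\ell$, it reads
\begin{align*}
\int_{B_R} |x-y|^\beta (R^2-|y|^2)^{\ell-\frac{\beta+d}{2}} \mathrm{d}y = \frac{\pi^{\frac{d}{2}}R^{2\ell}}{\Gamma(\tfrac{d}{2})} B\!\left(\tfrac{\beta+d}{2},\tfrac{2\ell+2-\beta-d}{2}\right) {}_2F_1\!\left(\begin{matrix}-\tfrac{\beta}{2},\; -\ell \\ \tfrac{d}{2}\end{matrix};\tfrac{|x|^2}{R^2}\right),
\end{align*}
valid whenever $\beta \in (-d,2+2\ell-d)$.

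Next I would choose $\ell$ so that the weight on the left-hand side matches the weight appearing in the corollary, that is, so that $\ell - \tfrac{\beta+d}{2} = m - \tfrac{\alpha+d}{2}$. This gives the identification $\ell = m + \tfrac{\beta-\alpha}{2}$, which is the substitution indicated in the statement. Plugging this in, one finds $2\ell = 2m + \beta - \alpha$ so that $R^{2\ell}=R^{2m+\beta-\alpha}$; the second Beta argument becomes $\tfrac{2m+2-\alpha-d}{2}$; and the second numerator parameter of ${}_2F_1$ becomes $-m-\tfrac{\beta-\alpha}{2}$. These are exactly the quantities appearing in the corollary, so the identity follows once the range is verified.

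The only thing requiring any real care is the range of validity, but this is routine: Lemma \ref{lemma2}'s hypothesis $\beta \in (-d,2+2\ell-d)$ becomes, after substituting $\ell = m + \tfrac{\beta-\alpha}{2}$, the pair of conditions $\beta > -d$ and $\alpha < 2+2m-d$, which together with $m \in \mathbb{N}_0$ are precisely the hypotheses imposed in Corollary \ref{corr:colemma2}. There is no substantive obstacle to this argument; the entire content is the bookkeeping of the parameter shift, and in particular one does not need the Jacobi polynomial form of Lemma \ref{lemma2}, only its hypergeometric form which is valid for the non-integer $\ell$ produced by the substitution.
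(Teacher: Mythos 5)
Your proposal is correct and is essentially identical to the paper's own proof: both apply Lemma \ref{lemma2} with the kernel power $\beta$ and the substitution $\ell = m + \tfrac{\beta-\alpha}{2}$, and the parameter bookkeeping (including the translation of the validity range into $\beta>-d$ and $\alpha<2+2m-d$) checks out. No further comment is needed.
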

\subsection{Laplacians of shifted Jacobi polynomials}
This section collects further useful lemmas related to recurrences to compute the ordinary Laplace operator of various radial hypergeometric functions, including shifted Jacobi polynomials.
\begin{lemma}\label{lemma:laplacejacobi}
The Laplace operator $\Delta$ acting on the shifted Jacobi polynomials $P_n^{(a,b)}(2|y|^2-1)$ on the $d$-dimensional unit ball, $y \in B_R = \{ x\in \mathbb{R}^d, |x|\leq R \}$, evaluates:
\begin{align*}
\Delta P_n^{(a,b)}(2|y|^2-1) &= 2d (a+b+n+1) P_{n-1}^{(a+1,b+1)}\left(2 |y|^2-1\right)\\
&+4 (a+b+n+1)(a+b+n+2) |y|^2 P_{n-2}^{(a+2,b+2)}\left(2 |y|^2-1\right).
\end{align*}
\end{lemma}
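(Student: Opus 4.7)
The plan is to reduce the computation to a one-dimensional calculation by exploiting the radial symmetry of $P_n^{(a,b)}(2|y|^2-1)$ and then apply the standard derivative identities for Jacobi polynomials listed earlier in the excerpt.

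First, since the argument $2|y|^2-1$ depends only on $r=|y|$, the function $f(r) := P_n^{(a,b)}(2r^2-1)$ is radial, so I can invoke the radial Laplacian formula \eqref{eq:radiallaplace}:
\begin{equation*}
\Delta f(r) = f''(r) + \frac{d-1}{r} f'(r).
\end{equation*}
Writing $g(x) := P_n^{(a,b)}(x)$ with $x = 2r^2-1$, the chain rule gives $f'(r) = 4r\, g'(x)$ and $f''(r) = 16r^2\, g''(x) + 4\, g'(x)$. Substituting into the radial Laplacian, the $1/r$ factor cancels cleanly:
\begin{equation*}
\Delta f(r) = 16 r^2\, g''(x) + 4\, g'(x) + 4(d-1)\, g'(x) = 16 r^2\, g''(x) + 4d\, g'(x).
\end{equation*}

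Next, I substitute the Jacobi derivative identities \eqref{eq:derivativesjac1} and \eqref{eq:derivativesjac2}, which give
\begin{equation*}
g'(x) = \tfrac{a+b+n+1}{2}\, P_{n-1}^{(a+1,b+1)}(x), \qquad g''(x) = \tfrac{(a+b+n+1)(a+b+n+2)}{4}\, P_{n-2}^{(a+2,b+2)}(x).
\end{equation*}
Plugging these in and replacing $r^2$ by $|y|^2$ and $x$ by $2|y|^2-1$ collapses the constants to $2d(a+b+n+1)$ and $4(a+b+n+1)(a+b+n+2)$, which is exactly the claimed identity.

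There is essentially no obstacle here: the only place one has to be careful is the cancellation of the apparent singularity $1/r$ in the radial Laplacian, which works because $f'(r)$ carries a factor of $r$ from the chain rule. Everything else is a one-line substitution of known identities, so the argument is short and clean; the main value of stating it as a lemma is to set up the next step of the method, where this radial Laplacian will be composed with the explicit power-law integrals from Lemma~\ref{lemma2} and its corollary.
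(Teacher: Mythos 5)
Your proof is correct and follows exactly the route the paper indicates: apply the radial Laplacian formula \eqref{eq:radiallaplace} together with the Jacobi derivative identities (\ref{eq:derivativesjac1}--\ref{eq:derivativesjac2}). The chain-rule bookkeeping and the cancellation of the $1/r$ factor check out, and the constants match the stated result.
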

\begin{proof}
This is easily obtained via the representation for the Laplace operator on radial functions in \eqref{eq:radiallaplace} in combination with the derivative relations for Jacobi polynomials in (\ref{eq:derivativesjac1}--\ref{eq:derivativesjac2}).
\end{proof}
Note that the terms on the right-hand side of Lemma \ref{lemma:laplacejacobi} can be transformed into a basis with consistent parameters by using shift operators such as those listed in (\ref{eq:basisconversionjacobi1}--\ref{eq:basisconversionjacobi2}). Unfortunately, when doing so the expressions do not generally simplify further and end up involving several Jacobi polynomials of different polynomial orders. That being said, we can obtain significant further simplification for certain choices of $a$ and $b$. Specifically, this is the case for the radially symmetric Jacobi polynonimals with basis parameters as seen in section \ref{sec:spectralondisk}.
\begin{lemma}\label{lemma:diagjacobilaplace}
The Laplace operator $\Delta$ acting on the shifted Jacobi polynomials $P_n^{(a,\frac{d-2}{2})}(2|y|^2-1)$ on the $d$-dimensional unit ball $B_1 = \{ x\in \mathbb{R}^d, |x|\leq 1 \}$ evaluates:
\begin{align*}
\Delta P_{n}^{\left(a,\frac{d-2}{2}\right)}\left(2 |y|^2-1\right) &= (2n+d-2) (2n+2a+d) P_{n-1}^{(a+2,\frac{d-2}{2})}(2|y|^2-1).
\end{align*}
\end{lemma}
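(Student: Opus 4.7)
The plan is to deduce Lemma \ref{lemma:diagjacobilaplace} directly from Lemma \ref{lemma:laplacejacobi} by a change of Jacobi parameters using only the ladder identities already catalogued in Section \ref{sec:jacobi}; no further analytic input is needed. Write $s = 2|y|^2 - 1$, so that $1 + s = 2|y|^2$, and let $A = n + a + \tfrac{d}{2}$. Substituting $b = \tfrac{d-2}{2}$ into Lemma \ref{lemma:laplacejacobi} gives
\[
\Delta P_n^{(a,\frac{d-2}{2})}(s) = 2dA\, P_{n-1}^{(a+1,\frac{d}{2})}(s) + 2A(A+1)(1+s)\, P_{n-2}^{(a+2,\frac{d+2}{2})}(s),
\]
while the target right-hand side equals $2A(2n+d-2)\, P_{n-1}^{(a+2,\frac{d-2}{2})}(s)$, because $2n+2a+d = 2A$. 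After cancelling the common factor $2A$, the claim reduces to
\[
d\, P_{n-1}^{(a+1,\frac{d}{2})}(s) + (A+1)(1+s)\, P_{n-2}^{(a+2,\frac{d+2}{2})}(s) = (2n+d-2)\, P_{n-1}^{(a+2,\frac{d-2}{2})}(s).
\]

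To establish this reduced identity, first apply the raising relation \eqref{eq:jacobiraisingB} with $(n,a,b) \mapsto (n-2,\,a+2,\,\tfrac{d}{2})$ to rewrite $(1+s)P_{n-2}^{(a+2,\frac{d+2}{2})}(s)$ as a linear combination of $P_{n-1}^{(a+2,\frac{d}{2})}(s)$ and $P_{n-2}^{(a+2,\frac{d}{2})}(s)$. Independently, apply the basis conversion \eqref{eq:basisconversionjacobi1} with $(n,a,b) \mapsto (n-1,\,a+1,\,\tfrac{d}{2})$ to expand $P_{n-1}^{(a+1,\frac{d}{2})}(s)$ in the very same pair of polynomials. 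Collecting coefficients over the common denominator $n+A = 2n + a + \tfrac{d}{2}$, and using the elementary simplification $2(A+1) - d = 2(n+a+1)$ on the $P_{n-2}$ coefficient, the left-hand side becomes
\[
\frac{2n+d-2}{n+A}\Bigl[(A+1)\, P_{n-1}^{(a+2,\frac{d}{2})}(s) + (n+a+1)\, P_{n-2}^{(a+2,\frac{d}{2})}(s)\Bigr].
\]

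It then remains to recognise the bracket divided by $n+A$ as $P_{n-1}^{(a+2,\frac{d-2}{2})}(s)$: this is exactly the complementary basis conversion \eqref{eq:basisconversionjacobi2} with $(n,a,b) \mapsto (n-1,\,a+2,\,\tfrac{d-2}{2})$, under which $n + a + b + 1 = A + 1$ and $2n + a + b + 1 = n + A$, so that both coefficients match on the nose. Substituting closes the argument. The proof is essentially algebraic bookkeeping, and the only real obstacle is tracking the three distinct shifted index sets $(a+1,\tfrac{d}{2})$, $(a+2,\tfrac{d}{2})$, $(a+2,\tfrac{d+2}{2})$, and $(a+2,\tfrac{d-2}{2})$ consistently; I would tabulate the substitutions in each invoked identity before collecting terms.
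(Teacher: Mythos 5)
Your proposal is correct and follows essentially the same route as the paper, which simply states that the result follows from Lemma \ref{lemma:laplacejacobi} via the basis conversions (\ref{eq:basisconversionjacobi1}--\ref{eq:basisconversionjacobi2}); your additional use of \eqref{eq:jacobiraisingB} to absorb the $|y|^2$ factor is the natural (and necessary) step to complete that sketch, and your coefficient bookkeeping checks out, including the identification $2(A+1)-d=2(n+a+1)$ and the final recognition of the bracket as $P_{n-1}^{(a+2,\frac{d-2}{2})}$ via \eqref{eq:basisconversionjacobi2}.
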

\begin{proof}
This can be obtained from Lemma \ref{lemma:laplacejacobi} via (\ref{eq:basisconversionjacobi1}--\ref{eq:basisconversionjacobi2}).
\end{proof}
For the case $a=0$ and $d=2$, i.e. for the case of ordinary Zernike polynomials on the unit disk, similar results to Lemma \ref{lemma:diagjacobilaplace} for the Laplacian and inverse Laplacian were discussed in \cite{janssen_zernike_2014} while also including their angular components.\\

\subsection{Power law potentials of Jacobi polynomials on the unit ball}\label{sec:subsecpowerbanded}
With all of the above results, we are now ready to investigate the action of the ball Riesz potential operator on radially symmetric functions expanded in orthogonal polynomials on arbitrary dimensional balls, where the radial part is expanded in terms of one-dimensional Jacobi polynomials $P_n^{(a,b)}(2r^2-1)$ with $r^2=|y|^2$ as discussed in sections \ref{sec:jacobi} and \ref{sec:spectralondisk}. First we prove the following result in low parameter ranges:
\begin{theorem}\label{theorem:diagonalformula}
On the $d$-dimensional unit ball $B_1 = \{ x \in \mathbb{R}^d, |x|\leq 1 \}$ the power law potential of the $n$-th radial Jacobi polynomial $P_n^{\left(-\frac{\alpha+d}{2},\frac{d-2}{2}\right)}(2|y|^2-1)$ weighted with $(1-|y|^2)^{-\frac{\alpha+d}{2}}$ evaluates explicitly as follows in the parameter range $\alpha \in (-d,2-d)$:
\begin{align*}
\int_{B_1}|x-y|^\alpha &(1-|y|^2)^{-\frac{\alpha+d}{2}} P_n^{(-\frac{\alpha+d}{2},\frac{d-2}{2})}(2|y|^2-1) \mathrm{d}y\\ 
&= \tfrac{ \pi ^{\frac{d}{2}} \Gamma \left(\frac{\alpha+d}{2}\right) \Gamma \left(n-\frac{\alpha }{2}\right) \Gamma \left(1 - \frac{\alpha+d }{2} + n\right)}{\Gamma \left(-\frac{\alpha }{2}\right) \Gamma \left(\frac{d}{2}+n\right) n!}P_n^{(-\frac{\alpha+d}{2},\frac{d-2}{2})}(2|x|^2-1)\\ 
&=-\tfrac{\alpha  \pi ^{\frac{d}{2}+1} \left(1-\frac{\alpha }{2}\right)_{n-1} \left(1-\frac{\alpha+d}{2}\right)_n}{2 \sin \left(\frac{\pi  (\alpha +d)}{2} \right) \Gamma \left(\frac{d}{2}+n\right) n!}P_n^{(-\frac{\alpha+d}{2},\frac{d-2}{2})}(2|x|^2-1).
\end{align*}
\end{theorem}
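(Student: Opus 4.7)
The key observation is that the integral on the left-hand side is, up to an explicit normalization, the action of the Riesz potential $(-\Delta)^{-\sigma/2}$ on the weighted Jacobi polynomial, where $\sigma := \alpha + d$. By hypothesis $\alpha \in (-d, 2-d)$, so $\sigma \in (0,2)$; this is exactly the regime in which the Dyda--Kuznetsov--Kwa\'snicki eigenfunction relation \eqref{eq:eigenjacobi}, read backwards, applies to a Jacobi polynomial with singular weight $(1-|y|^2)^{-\sigma/2}$.

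The plan is as follows. First I would substitute $\gamma \mapsto -\sigma = -(\alpha+d)$ in \eqref{eq:eigenjacobi}, taking $l = 0$ and $V \equiv 1$ to restrict to the radial setting. Since $\sigma \in (0,2)$, the substituted parameter $-\sigma$ lies in $(-2,0)$, which is precisely the extended range established in Appendix \ref{app:dydaformula}. This yields the eigenrelation
\begin{equation*}
(-\Delta)^{-\sigma/2}\!\left[(1-|y|^2)^{-\sigma/2} P_n^{(-\sigma/2,(d-2)/2)}(2|y|^2-1)\right] = \lambda_n\, P_n^{(-\sigma/2,(d-2)/2)}(2|x|^2-1),
\end{equation*}
with the explicit eigenvalue
\begin{equation*}
\lambda_n = \frac{2^{-\sigma}\,\Gamma(n+1-\sigma/2)\,\Gamma(n+(d-\sigma)/2)}{n!\,\Gamma(n+d/2)}.
\end{equation*}

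Second, I would rewrite the left-hand side using Definition \ref{def:rieszpotential}: the operator $(-\Delta)^{-\sigma/2}$ is the Riesz potential with kernel $|x-y|^{-(d-\sigma)} = |x-y|^{\alpha}$ and normalization constant $\Gamma((d-\sigma)/2)/(\pi^{d/2} 2^{\sigma}\Gamma(\sigma/2))$. Clearing this constant transforms the eigenrelation into an identity for the power law integral of the theorem statement, with the scalar
\begin{equation*}
\frac{\pi^{d/2}\,\Gamma\bigl(\tfrac{\alpha+d}{2}\bigr)\,\Gamma\bigl(n - \tfrac{\alpha}{2}\bigr)\,\Gamma\bigl(n + 1 - \tfrac{\alpha+d}{2}\bigr)}{\Gamma(-\tfrac{\alpha}{2})\,\Gamma(n+\tfrac{d}{2})\,n!}
\end{equation*}
as the coefficient. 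This is the first form of the answer in the statement.

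For the second form, I would apply Euler's reflection formula $\Gamma(z)\Gamma(1-z) = \pi/\sin(\pi z)$ with $z = (\alpha+d)/2$ to the pair $\Gamma((\alpha+d)/2)\,\Gamma(n+1-(\alpha+d)/2)$, together with the Pochhammer rewrites $\Gamma(n-\alpha/2)/\Gamma(-\alpha/2) = (-\alpha/2)\,(1-\alpha/2)_{n-1}$ and $\Gamma(n+1-(\alpha+d)/2)/\Gamma(1-(\alpha+d)/2) = (1-(\alpha+d)/2)_n$. This produces the sine-form constant in the theorem. The main obstacle is nothing conceptual but rather correctly identifying which parameter substitution into the extended eigenfunction theorem reproduces the Jacobi polynomial $P_n^{(-(\alpha+d)/2,(d-2)/2)}$ on both sides, and carefully matching the Riesz normalization with the Dyda--Kuznetsov--Kwa\'snicki normalization constant; the remaining Gamma-function manipulations are routine.
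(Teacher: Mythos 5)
Your proposal is correct and follows essentially the same route as the paper: specialize the Dyda--Kuznetsov--Kwa\'snicki eigenrelation \eqref{eq:eigenjacobi} to $V\equiv 1$, $l=0$, substitute $\gamma=-(\alpha+d)$ using the extended range from Appendix \ref{app:dydaformula}, rescale by the Riesz-versus-power-law normalization constant, and obtain the sine form via the reflection identity (which the paper phrases through the Beta function $B\bigl(\tfrac{\alpha+d}{2},1-\tfrac{\alpha+d}{2}\bigr)=\pi/\sin\bigl(\tfrac{\pi(\alpha+d)}{2}\bigr)$). All constants you report, including the eigenvalue and the final prefactor, match the paper's.
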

\begin{proof}
There are a number of ways to derive this result from what we have so far discussed, the most straightforward of which is to use results based on those of Dyda, Kuznetsov and Kwaśnicki \cite[Theorem 3]{dyda_fractional_2017} which we reproduced in \eqref{eq:eigenjacobi} and extended in Appendix \ref{app:dydaformula}. When $\gamma \in (0,d)$ the Riesz potential is the inverse of the fractional Laplace operator, motivating the use of \eqref{eq:eigenjacobi} to find a related result for the Riesz potential. With $V(x)=1$ and thus $l=0$ the expression reads
\begin{align*}
(-\Delta )^{\frac{\gamma}{2}} (1 - |x|^2)^{\frac{\gamma}{2}} &P^{(\frac{\gamma}{2},\frac{d-2}{2})}_n(2 |x|^2 - 1) \\&= \tfrac{2^\gamma \Gamma \left(1 + \frac{\gamma }{2} + n\right) \Gamma \left(\frac{d + \gamma }{2} + n\right)}{\, \Gamma \left(\frac{d}{2} + n\right)n!} \, P^{(\frac{\gamma}{2},\frac{d-2}{2})}_n(2 |x|^2 - 1).
\end{align*}
Setting $\gamma = -(\alpha+d)$ then leads to
\begin{align*}
(-\Delta )^{-\frac{\alpha+d}{2}} (1 - |x|^2)^{-\frac{\alpha+d}{2}} &P^{(-\frac{\alpha+d}{2},\frac{d-2}{2})}_n(2 |x|^2 - 1) \\&= \tfrac{ \Gamma \left(1 - \frac{\alpha+d }{2} + n\right) \Gamma \left(n-\frac{\alpha }{2}\right)}{ 2^{\alpha+d} \, \Gamma \left(\frac{d}{2} + n\right)n!} \, P^{(-\frac{\alpha+d}{2},\frac{d-2}{2})}_n(2 |x|^2 - 1).
\end{align*}
The inherited validity of this expression is $\alpha \in (-d,2-d)$, see Appendix \ref{app:dydaformula}. The above result holds for Riesz potentials, which as we saw in section \ref{sec:riesztheory} has different normalization constants to the equivalent power law potentials we are working with here. Taking a glance at Definitions \ref{def:fraclaplace} and \ref{def:rieszpotential}, this difference can be accounted for by simply multiplying both sides in the above equation by $\tfrac{  \pi^{\frac{d}{2}} 2^{\alpha+d}\Gamma(\frac{\alpha+d}{2})}{\Gamma(-\frac{\alpha}{2})}$. Carrying out the appropriate cancellations we are then left with the first stated result.\\
To obtain the second variation from the previous result, note that
\begin{align*}
\tfrac{ \pi ^{\frac{d}{2}} \Gamma \left(\frac{\alpha+d}{2}\right) \Gamma \left(n-\frac{\alpha }{2}\right) \Gamma \left(1 - \frac{\alpha+d }{2} + n\right)}{\Gamma \left(-\frac{\alpha }{2}\right) \Gamma \left(\frac{d}{2}+n\right) n!}=-\tfrac{\alpha  \pi ^{\frac{d}{2}} B\left(\frac{\alpha+d}{2},1-\frac{\alpha+d}{2}\right) \left(1-\frac{\alpha }{2}\right)_{n-1} \left(1-\frac{\alpha+d}{2}\right)_n}{2 \Gamma \left(\frac{d}{2}+n\right) n!},
\end{align*}
which can easily be verified by expanding the beta function and Pochhammer symbols into their representations in terms of Gamma functions, see \cite[5.2.5,5.12.1]{nist_2018}. The second variation then immediately follows from the fact \cite[5.5.3]{nist_2018} that
\begin{align*}
B\left(\tfrac{\alpha+d}{2},1-\tfrac{\alpha+d}{2}\right) = \tfrac{\pi}{ \sin \left(\frac{\pi  (\alpha +d)}{2} \right)}.
\end{align*}
Note that this is the same factor that already appeared in Lemma \ref{lemma1}.
\end{proof}
\begin{corollary}\label{corr:diagonality}
On the $d$-dimensional unit ball $B_1 = \{ x \in \mathbb{R}^d, |x|\leq 1 \}$ the power law potential of $(1-|y|^2)^{-\frac{\alpha+d}{2}}P_{n+1}^{\left(-\frac{\alpha+d}{2},\frac{d-2}{2}\right)}(2|y|^2-1)$ can be computed using a three term recurrence relationship:
\begin{align*}
\int_{B_1}|x-y|^\alpha &(1-|y|^2)^{-\frac{\alpha+d}{2}} P_{n+1}^{(-\frac{\alpha+d}{2},\frac{d-2}{2})}(2|y|^2-1) \mathrm{d}y =\\
((2|x|^2-1)\kappa_A + \kappa_B)&\int_{B_1}|x-y|^\alpha (1-|y|^2)^{-\frac{\alpha+d}{2}} P_n^{(-\frac{\alpha+d}{2},\frac{d-2}{2})}(2|y|^2-1) \mathrm{d}y \\-\kappa_C &\int_{B_1}|x-y|^\alpha (1-|y|^2)^{-\frac{\alpha+d}{2}} P_{n-1}^{(-\frac{\alpha+d}{2},\frac{d-2}{2})}(2|y|^2-1) \mathrm{d}y,
\end{align*}
where
\begin{align*}
&\kappa_A =-\tfrac{(4 n-\alpha ) (-\alpha +4 n+2) (\alpha +d-2 n-2)}{8 (n+1)^2 (d+2 n)},\\
&\kappa_B =-\tfrac{(\alpha +2) (\alpha +2 d-2) (4 n-\alpha ) (\alpha +d-2 n-2)}{8 (n+1)^2 (d+2 n) (-\alpha +4 n-2)},\\
&\kappa_C =-\tfrac{(-\alpha +2 n-2) (-\alpha +4 n+2) (\alpha +d-2 n-2) (\alpha +d-2 n)^2}{8 n (n+1)^2 (d+2 n) (-\alpha +4 n-2)}.
\end{align*}
\end{corollary}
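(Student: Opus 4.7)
The plan is to reduce this corollary to the classical three-term recurrence satisfied by the Jacobi polynomials themselves, using Theorem \ref{theorem:diagonalformula} as the crucial diagonalization step. Introduce the shorthand
\[
I_n(x) := \int_{B_1} |x-y|^\alpha (1-|y|^2)^{-\frac{\alpha+d}{2}} P_n^{(-\frac{\alpha+d}{2},\frac{d-2}{2})}(2|y|^2-1)\,\mathrm{d}y.
\]
By Theorem \ref{theorem:diagonalformula}, $I_n(x) = c_n\, P_n^{(-\frac{\alpha+d}{2},\frac{d-2}{2})}(2|x|^2-1)$, where
\[
c_n = -\frac{\alpha\, \pi^{\frac{d}{2}+1} \left(1-\tfrac{\alpha}{2}\right)_{n-1} \left(1-\tfrac{\alpha+d}{2}\right)_n}{2 \sin\!\left(\tfrac{\pi(\alpha+d)}{2}\right) \Gamma\!\left(\tfrac{d}{2}+n\right) n!}.
\]

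Next, I would invoke the classical Jacobi three-term recurrence \eqref{eq:jacobirecclassical} with parameters $a=-\tfrac{\alpha+d}{2}$, $b=\tfrac{d-2}{2}$, and argument $z = 2|x|^2-1$. Since the polynomial in the integrand coincides with the one in the explicit evaluation of $I_n$, multiplying the recurrence through by $c_{n+1}$ and re-expressing $P_n$ and $P_{n-1}$ in terms of $I_n$ and $I_{n-1}$ yields
\[
I_{n+1}(x) = \frac{c_{n+1}}{c_n}\bigl(A_n(2|x|^2-1) + B_n\bigr) I_n(x) - \frac{c_{n+1}}{c_{n-1}} C_n\, I_{n-1}(x),
\]
so that $\kappa_A = (c_{n+1}/c_n) A_n$, $\kappa_B = (c_{n+1}/c_n) B_n$, and $\kappa_C = (c_{n+1}/c_{n-1}) C_n$.

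The remaining work is pure bookkeeping. I would specialize the Jacobi recurrence coefficients $A_n,B_n,C_n$ from \eqref{eq:jacobirecclassical} to $(a,b)=(-\tfrac{\alpha+d}{2},\tfrac{d-2}{2})$, noting the useful simplifications $a+b+1 = -\tfrac{\alpha}{2}$ and $a^2-b^2 = \tfrac{(\alpha+2)(\alpha+2d-2)}{4}$. For the ratios $c_{n+1}/c_n$ and $c_{n+1}/c_{n-1}$ I would use the Pochhammer step identities $(x)_{n+1}/(x)_n = x+n$ together with $\Gamma(\tfrac{d}{2}+n+1)/\Gamma(\tfrac{d}{2}+n) = \tfrac{d}{2}+n$, which give the compact formulas
\[
\frac{c_{n+1}}{c_n} = \frac{(2n-\alpha)(2n+2-\alpha-d)}{2(d+2n)(n+1)},
\]
and an analogous expression for $c_{n+1}/c_{n-1}$. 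Multiplying by the explicit $A_n,B_n,C_n$ and collecting common factors produces the stated forms for $\kappa_A$, $\kappa_B$, $\kappa_C$.

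The main (and only) obstacle is algebraic rather than conceptual: one must track the pairwise cancellations between Pochhammer numerators and denominators carefully enough to exhibit the factors $(4n-\alpha)$, $(4n+2-\alpha)$, $(\alpha+d-2n-2)$ and $(\alpha+d-2n)^2$ appearing in the final statement, and to verify that the normalization constants from $c_n$ cancel exactly against the sine and gamma prefactors so that no transcendental term survives.
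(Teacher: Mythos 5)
Your proposal is correct and follows exactly the paper's own (one-line) proof: apply Theorem \ref{theorem:diagonalformula} to write each integral as $c_n P_n^{(-\frac{\alpha+d}{2},\frac{d-2}{2})}(2|x|^2-1)$ and then invoke the classical three-term recurrence \eqref{eq:jacobirecclassical}, with the $\kappa$'s arising as $A_n,B_n,C_n$ rescaled by the ratios of the $c_n$'s. Your ratio $c_{n+1}/c_n = \tfrac{(2n-\alpha)(2n+2-\alpha-d)}{2(d+2n)(n+1)}$ is correct and reproduces the stated $\kappa_A$, so the remaining bookkeeping goes through as you describe.
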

\begin{proof}
This immediately follows from expanding the Jacobi polynomial on the right-hand side of the formula given in Theorem \ref{theorem:diagonalformula} using the classical three-term recurrence for Jacobi polynomials in \eqref{eq:jacobirecclassical}.
\end{proof}
\begin{remark}
We can easily convince ourselves that the diagonality result in Corollary \ref{corr:diagonality} cannot in general be true for higher parameters $\alpha>2-d$, i.e. that diagonal operators only exist for  $\alpha \in (-d,2-d)$. First, note that in order for solutions to exist when $\alpha>2-d$ we need the weight parameter $\ell$ to satisfy $\ell>0$ in Lemma \ref{lemma2}. Obviously one necessary condition for the operator to be diagonal is that the $0$-th order polynomial is mapped to a constant but the right-hand side of Lemma \ref{lemma2} tells us that this only occurs when $\ell=0$, as otherwise the hypergeometric function (or equivalently the Jacobi polynomial) is not constant with respect to $|x|$. Corollary \ref{corr:diagonality} covers precisely the case $\ell =0$.
\end{remark}
While the operators cannot be diagonal in higher parameter regimes, we can nevertheless obtain \emph{banded} operators with explicit elements which is similarly efficient for computing purposes. The following theorem shows how to obtain higher bandwidth operators for higher parameter ranges and is the fundamental result for our sparse spectral method:

\begin{theorem}\label{theorem:recurrencen1case}
On the $d$-dimensional unit ball $B_1 = \{ x \in \mathbb{R}^d, |x|\leq 1 \}$ the power law potential of the $n$-th radial Jacobi polynomial $P_n^{\left(1-\frac{\alpha+d}{2},\frac{d-2}{2}\right)}(2|y|^2-1)$ weighted with $(1-|y|^2)^{1-\frac{\alpha+d}{2}}$ evaluates as follows in the parameter range $\alpha \in (2-d,4-d)$:
\begin{align*}
\int_{B_1}|x-y|^\alpha (1-|y|^2)^{1-\frac{\alpha+d}{2}} P_n^{(1-\frac{\alpha+d}{2},\frac{d-2}{2})}(2|y|^2-1) \mathrm{d}y &=\kappa_a P_{n-1}^{\left(1-\frac{\alpha+d}{2},\frac{d-2}{2}\right)}\left(2 |x|^2-1\right)
\\&+\kappa_b P_{n}^{\left(1-\frac{\alpha+d}{2},\frac{d-2}{2}\right)}\left(2 |x|^2-1\right)
\\ &+\kappa_c P_{n+1}^{\left(1-\frac{\alpha+d}{2},\frac{d-2}{2}\right)}\left(2 |x|^2-1\right),
\end{align*}
where the constants are given by
\begin{align*}
&\kappa_a = -\tfrac{4 \pi ^{d/2} \Gamma \left(\frac{\alpha+d}{2}\right) \Gamma \left(n-\frac{\alpha}{2}\right) \Gamma \left(n-\frac{\alpha+d}{2}+2\right)}{(\alpha-4 n-2) (\alpha-4 n) \Gamma \left(-\frac{\alpha}{2}\right) \Gamma (n+1) \Gamma \left(\frac{d}{2}+n-1\right)},\\
&\kappa_b = \tfrac{8 \pi ^{d/2} \Gamma \left(\frac{\alpha+d}{2}\right) \Gamma \left(n-\frac{\alpha}{2}+1\right) \Gamma \left(n-\frac{\alpha+d}{2}+2\right)}{ (\alpha-4 n) (\alpha-4 (n+1)) \Gamma \left(-\frac{\alpha}{2}\right) \Gamma (n+1) \Gamma \left(\frac{d}{2}+n\right)},\\
&\kappa_c =-\tfrac{4 \pi ^{d/2} \Gamma \left(\frac{\alpha+d}{2}\right) \Gamma \left(n-\frac{\alpha}{2}+2\right) \Gamma \left(n-\frac{\alpha+d}{2}+2\right)}{(\alpha-4 n-2) (\alpha-4 (n+1)) \Gamma \left(-\frac{\alpha}{2}\right) \Gamma (n+1) \Gamma \left(\frac{d}{2}+n+1\right)}.
\end{align*}
\end{theorem}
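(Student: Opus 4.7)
The plan is to mimic the strategy sketched between Lemmas \ref{lemma1} and \ref{lemma2}: apply the Laplacian in $x$ to drop the kernel exponent by two and land in the diagonal regime already handled by Theorem \ref{theorem:diagonalformula}, then invert the Laplacian using a polynomial antiderivative supplied by Lemma \ref{lemma:diagjacobilaplace}, and finally re-express the answer in the target Jacobi basis via two iterations of the parameter-raising identity \eqref{eq:basisconversionjacobi1}.

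Denote the left-hand side by $I_n(x)$. Since $\Delta_x|x-y|^\alpha=\alpha(\alpha+d-2)|x-y|^{\alpha-2}$ and $|x-y|^{\alpha-2}$ is locally integrable precisely when $\alpha>2-d$, one may differentiate under the integral. Using the observation that $1-\tfrac{\alpha+d}{2}=-\tfrac{(\alpha-2)+d}{2}$ shifts the weight and the Jacobi parameter in lockstep, this yields
\begin{align*}
\Delta_x I_n(x)=\alpha(\alpha+d-2)\int_{B_1}|x-y|^{\alpha-2}(1-|y|^2)^{-\frac{(\alpha-2)+d}{2}}P_n^{\left(-\frac{(\alpha-2)+d}{2},\frac{d-2}{2}\right)}(2|y|^2-1)\,\D y.
\end{align*}
With $\alpha-2\in(-d,2-d)$ the right-hand side falls squarely under Theorem \ref{theorem:diagonalformula}, so $\Delta_x I_n(x)$ is an explicit constant multiple of $P_n^{(1-\frac{\alpha+d}{2},\frac{d-2}{2})}(2|x|^2-1)$. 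Lemma \ref{lemma:diagjacobilaplace} applied with $a=-1-\tfrac{\alpha+d}{2}$ supplies the matching antiderivative,
\begin{align*}
\Delta P_{n+1}^{\left(-1-\frac{\alpha+d}{2},\frac{d-2}{2}\right)}(2|x|^2-1)=(2n+d)(2n-\alpha)\,P_n^{\left(1-\frac{\alpha+d}{2},\frac{d-2}{2}\right)}(2|x|^2-1),
\end{align*}
so that $I_n(x)=c_n P_{n+1}^{(-1-\frac{\alpha+d}{2},\frac{d-2}{2})}(2|x|^2-1)+H(|x|)$ for an explicit constant $c_n$ and a radial harmonic $H$. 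Expanding the integrand in powers of $(1-|y|^2)$ via \eqref{eq:jacobihyper} and applying Corollary \ref{corr:colemma2} term by term shows that $I_n$ is a polynomial in $|x|^2$, hence regular at the origin; the only radial harmonic functions regular there are constants, so $H\equiv C$ for some constant $C$.

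To put the answer in the advertised basis I apply \eqref{eq:basisconversionjacobi1} twice (with $b=\tfrac{d-2}{2}$), shifting $a$ from $-1-\tfrac{\alpha+d}{2}$ up to $1-\tfrac{\alpha+d}{2}$. Each application widens the degree support by one, so the composition sends $P_{n+1}^{(-1-\frac{\alpha+d}{2},\frac{d-2}{2})}(2|x|^2-1)$ into an explicit linear combination of $P_{n+1}$, $P_n$, and $P_{n-1}$ in the parameter $(1-\tfrac{\alpha+d}{2},\tfrac{d-2}{2})$, which is precisely the three-term shape claimed. The residual $C$ is then pinned down by evaluating both sides at $x=0$: Lemma \ref{lemma:balljacobix0} (with $m=1-\tfrac{\alpha+d}{2}$, $p=\alpha$, $a=1-\tfrac{\alpha+d}{2}$, $b=\tfrac{d-2}{2}$) gives $I_n(0)$ as a ${}_3F_2$, while the right-hand side at $x=0$ is computed via $P_m^{(a,b)}(-1)=(-1)^m(b+1)_m/m!$ from \eqref{eq:jacobieval2}; the expectation is $C=0$, leaving exactly the stated coefficients $\kappa_a,\kappa_b,\kappa_c$.

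The main obstacle is purely computational: carrying the explicit constants through Theorem \ref{theorem:diagonalformula}, Lemma \ref{lemma:diagjacobilaplace}, and the two applications of \eqref{eq:basisconversionjacobi1}, simplifying the resulting tangle of Gamma and Pochhammer factors, and above all demonstrating that the ${}_3F_2$ produced by Lemma \ref{lemma:balljacobix0} at $x=0$ collapses to the exact value required for $C=0$. The Prudnikov--Brychkov--Marichev identity \eqref{eq:prudnikovsum} recalled at the start of this section is the natural candidate for effecting that final hypergeometric reduction.
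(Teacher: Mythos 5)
Your proposal is correct and follows essentially the same route as the paper: reduce to the diagonal case of Theorem \ref{theorem:diagonalformula} by applying $\Delta_x$ to drop the kernel power by two, invert the Laplacian via Lemma \ref{lemma:diagjacobilaplace}, move to the target basis with two applications of \eqref{eq:basisconversionjacobi1}, and kill the integration constant by evaluating at $x=0$ with Lemma \ref{lemma:balljacobix0}. The only (harmless) deviation is the order of operations: you invert the Laplacian first, passing through the antiderivative $P_{n+1}^{(-1-\frac{\alpha+d}{2},\frac{d-2}{2})}$ whose first parameter lies in $(-3,-2)$, and raise parameters afterwards, whereas the paper deliberately raises the parameters of the right-hand side \emph{before} inverting so that every Jacobi parameter stays above $-1$; your version still goes through because Lemma \ref{lemma:diagjacobilaplace} and \eqref{eq:basisconversionjacobi1} are polynomial identities with coefficients rational in the parameters, but that justification should be made explicit.
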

\begin{proof}
First we note how the $d$-dimensional ordinary Laplace operator $\Delta_x$ in $x$ acts on power law integrals with increased power $\alpha+2$, where $\alpha\in(-d,2-d)$ as before:
\begin{align*}
\Delta_x \int_{B_1}&|x-y|^{\alpha+2} (1-|y|^2)^{-\frac{\alpha+d}{2}} P_{n}^{(-\frac{\alpha+d}{2},\frac{d-2}{2})}(2|y|^2-1) \mathrm{d}y = \\&(\alpha+d)(\alpha+2)\int_{B_1}|x-y|^{\alpha} (1-|y|^2)^{-\frac{\alpha+d}{2}} P_{n}^{(-\frac{\alpha+d}{2},\frac{d-2}{2})}(2|y|^2-1) \mathrm{d}y =\\
&-\tfrac{(\alpha+d)(\alpha+2)\alpha  \pi ^{\frac{d}{2}} B\left(\frac{\alpha+d}{2},1-\frac{\alpha+d}{2}\right) \left(1-\frac{\alpha }{2}\right)_{n-1} \left(1-\frac{\alpha+d}{2}\right)_n}{2 \Gamma \left(\frac{d}{2}+n\right) n!}P_n^{(-\frac{\alpha+d}{2},\frac{d-2}{2})}(2|x|^2-1).
\end{align*}
The final step makes use of Theorem \ref{theorem:diagonalformula}. Using Lemma \ref{lemma:diagjacobilaplace} to invert the Laplacian in this form is ill-advised, as it will reduce the first parameter of the Jacobi polynomial on the right-hand side by $2$ with no guarantee that the first Jacobi parameter will remain greater than $-1$. To avoid this happening, we first raise the basis parameters using the shift operators in (\ref{eq:basisconversionjacobi1}--\ref{eq:basisconversionjacobi2}). This first yields
\begin{align*}
P_n^{(-\frac{\alpha+d}{2},\frac{d-2}{2})}(2|x|^2-1) &= \tfrac{\alpha -2 n}{\alpha -4 n} P_n^{\left(1-\frac{\alpha+d}{2},\frac{d-2}{2}\right)}\left(2 |x|^2-1\right) \\&+\tfrac{d+2 n-2}{\alpha -4n}P_{n-1}^{\left(1-\frac{\alpha+d}{2},\frac{d-2}{2}\right)}\left(2 |x|^2-1\right),
\end{align*}
and in a second application
\begin{align*}
P_n^{(-\frac{\alpha+d}{2},\frac{d-2}{2})}(2|x|^2-1) = &\tfrac{(d+2 n-4) (d+2 n-2)}{(4 n-\alpha )(-\alpha +4 n-2)}P_{n-2}^{\left(2-\frac{\alpha+d}{2},\frac{d-2}{2}\right)}\left(2 |x|^2-1\right)
\\+&\tfrac{2 (d+2 n-2) (\alpha-2 n)}{(\alpha -4 n)^2-4}P_{n-1}^{\left(2-\frac{\alpha+d}{2},\frac{d-2}{2}\right)}\left(2 |x|^2-1\right)\\
+&\tfrac{(\alpha -2 n) (\alpha -2 n-2)}{(4 n-\alpha ) (-\alpha +4 n+2)}P_n^{\left(2-\frac{\alpha+d}{2},\frac{d-2}{2}\right)}\left(2 |x|^2-1\right).
\end{align*}
With this conversion and because of the radial symmetry of the integrals we can invert the Laplacian obtaining
\begin{align*}
\int_{B_1}|x-y|^{\alpha+2} (1-|y|^2)^{-\frac{\alpha+d}{2}} &P_{n}^{\left(-\frac{\alpha+d}{2},\frac{d-2}{2}\right)}(2|y|^2-1) \mathrm{d}y =\\ c_a &P_{n-1}^{\left(-\frac{\alpha+d}{2},\frac{d-2}{2}\right)}\left(2 |x|^2-1\right)
+c_b P_{n}^{\left(-\frac{\alpha+d}{2},\frac{d-2}{2}\right)}\left(2 |x|^2-1\right)
\\ +c_c &P_{n+1}^{\left(-\frac{\alpha+d}{2},\frac{d-2}{2}\right)}\left(2 |x|^2-1\right)
+ c_d,
\end{align*}
with constants
\begin{align*}
&c_a = -\tfrac{4 \pi ^{d/2} \Gamma \left(\frac{1}{2} (d+\alpha +2)\right) \Gamma \left(n-\frac{\alpha }{2}-1\right) \Gamma \left(n-\frac{\alpha+d}{2}+1\right)}{\Gamma \left(-\frac{\alpha }{2}-1\right) (-\alpha +4 n-2) (4 n-\alpha ) \Gamma (n+1) \Gamma \left(\frac{d}{2}+n-1\right)},\\
&c_b = \tfrac{8 \pi ^{d/2} \Gamma \left(\frac{1}{2} (d+\alpha +2)\right) \Gamma \left(n-\frac{\alpha }{2}\right) \Gamma \left(n-\frac{\alpha+d}{2}+1\right)}{ \Gamma \left(-\frac{\alpha }{2}-1\right) \left((\alpha -4 n)^2-4\right) \Gamma(n+1) \Gamma \left(\frac{d}{2}+n\right)},\\
&c_c = -\tfrac{4 \pi ^{d/2} \Gamma \left(\frac{1}{2} (d+\alpha +2)\right) \Gamma \left(n-\frac{\alpha }{2}+1\right) \Gamma \left(n-\frac{\alpha+d}{2}+1\right)}{\Gamma \left(-\frac{\alpha }{2}-1\right)  (-\alpha +4 n+2) (4 n-\alpha ) \Gamma (n+1) \Gamma \left(\frac{d}{2}+n+1\right)}.
\end{align*}
Finally, the constant term $c_d$ may be fixed by evaluating the integral expression at $x=0$, which can be done using Lemma \ref{lemma:balljacobix0}. Using the evaluation operations in (\ref{eq:jacobieval1}--\ref{eq:jacobieval2}) then leaves us with the following equation:
\begin{align*}
(-1)^n &\left(-\frac{(\frac{d}{2})_{n-1}}{(n-1)!}c_a+\frac{(\frac{d}{2})_{n}}{n!}c_b-\frac{(\frac{d}{2})_{n+1}}{(n+1)!}c_c\right)+c_d =\\& \sum_{k=0}^{n}(-1)^{n+k}\tfrac{{\left(n-\frac{\alpha+d}{2}+%
\frac{d-2}{2}+1\right)_{k}}{\left(\frac{d-2}{2}+k+1\right)_{n-k}}}{k!\;(n-k)!} \frac{\pi^{\frac{d}{2}}}{\Gamma(\frac{d}{2})}B\left(\tfrac{2k+\alpha+2+d}{2},1-\tfrac{\alpha+d}{2}\right),
\end{align*}
which can be solved for $c_d$ for all $n$. The result is as nice as one could hope for:
\begin{align*}
c_d = 0.
\end{align*}
Now replacing $\alpha+2$ with $\alpha$ to retain the same form for the kernel as before yields the desired result, with inherited range of validity $\alpha \in (2-d,4-d)$.
\end{proof}
\begin{remark}
The method used to prove Theorem \ref{theorem:recurrencen1case} can be used in a straightforward way to find banded operators for all $\ell \in \mathbb{N}_0$ for the power law potential
\begin{align*}
\int_{B_1}|x-y|^\alpha (1-|y|^2)^{\ell-\frac{\alpha+d}{2}} P_n^{(\ell-\frac{\alpha+d}{2},\frac{d-2}{2})}(2|y|^2-1) \mathrm{d}y,
\end{align*}
where the bandwidth increases with $\ell$, starting from the diagonal case in Theorem \ref{theorem:diagonalformula} with $\ell=0$. The inherited range of validity for the successively obtained expressions is $(2\ell-d,2+2\ell-d)$ and the respective operators each have exactly $2\ell+1$ bands in our basis. We omit these further explicit derivations, as the involved expressions become lengthy and do not require any new ideas.
\end{remark}

\subsection{Decoupling the weight and kernel power}
When dealing with both an attractive and a repulsive power law integral at the same time, we need additional tools to treat both simultaneously in one consistent basis. We begin by proving a result for power law potentials of weighted monomials on balls:
\begin{lemma}\label{lemma3}
On $B_R$ the power law potential of $|y|^{2k} (1-|y|^2)^{\ell-\frac{\alpha+d}{2}}$ with $k>0$ and $l\in\mathbb{N}_0$ satisfies the following recurrence relation:
\begin{align*}
\int_{B_R} \left\lvert x-y  \right\rvert^\alpha |y|^{2k} &(R^2-|y|^2)^{\ell-\frac{\alpha+d}{2}} \mathrm{d}y =\\ R^2 &\int_{B_R} \left\lvert x-y \right\rvert^\alpha |y|^{2(k-1)} (R^2-|y|^2)^{\ell-\frac{\alpha+d}{2}} \mathrm{d}y \\ - &\int_{B_R} \left\lvert x-y \right\rvert^\alpha |y|^{2(k-1)} (R^2-|y|^2)^{\ell+1-\frac{\alpha+d}{2}} \mathrm{d}y.
\end{align*}
\begin{proof}
Expanding one of the powers of the weight in the right-most term with the highest weight leads directly to
\begin{align*}
\int_{B_R} \left\lvert x-y \right\rvert^\alpha |y|^{2(k-1)} &(R^2-|y|^2)^{\ell+1-\frac{\alpha+d}{2}} \mathrm{d}y =\\ R^2 &\int_{B_R} \left\lvert x-y \right\rvert^\alpha |y|^{2(k-1)} (R^2-|y|^2)^{\ell-\frac{\alpha+d}{2}} \mathrm{d}y \\ - &\int_{B_R} \left\lvert x-y  \right\rvert^\alpha |y|^{2k} (R^2-|y|^2)^{\ell-\frac{\alpha+d}{2}} \mathrm{d}y.
\end{align*}
\end{proof}
\end{lemma}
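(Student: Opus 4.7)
The plan is to establish the recurrence by a short algebraic manipulation of the integrand, followed by linearity of the integral. No new machinery from potential theory or the Jacobi polynomial identities earlier in the paper is required; the statement is essentially a telescoping identity for the two-parameter family of integrands indexed by $k$ and $\ell$, and it serves to trade one power of the monomial $|y|^{2k}$ for one extra power of the weight $(R^{2}-|y|^{2})$.

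Concretely, I would first factor one power of $|y|^{2}$ out of $|y|^{2k}$ and then apply the trivial identity $|y|^{2} = R^{2}-(R^{2}-|y|^{2})$ to obtain the pointwise relation
\[
|y|^{2k}(R^{2}-|y|^{2})^{\ell-\frac{\alpha+d}{2}} = R^{2}\,|y|^{2(k-1)}(R^{2}-|y|^{2})^{\ell-\frac{\alpha+d}{2}} - |y|^{2(k-1)}(R^{2}-|y|^{2})^{\ell+1-\frac{\alpha+d}{2}}.
\]
Multiplying both sides by the kernel $|x-y|^{\alpha}$ and integrating over $B_R$ then produces the claimed three-term identity directly from linearity of the integral. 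Equivalently, one could derive the same recurrence in reverse by expanding one factor of $(R^{2}-|y|^{2})$ in the term with the highest weight exponent, which makes the telescoping structure transparent.

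The only thing one must verify is that each of the three resulting integrals is individually well-defined, so that the split by linearity is legitimate. Under the assumptions $k>0$ and $\ell\in\mathbb{N}_{0}$ together with $\alpha$ in a range for which Lemma \ref{lemma2} and Corollary \ref{corr:colemma2} apply, the weight exponents $\ell-\tfrac{\alpha+d}{2}$ and $\ell+1-\tfrac{\alpha+d}{2}$ give integrable boundary behaviour, and the kernel singularity $|x-y|^{\alpha}$ is locally integrable in dimension $d$. There is thus no genuine obstacle in the proof; the content of the lemma is purely structural, providing the reduction from mixed monomial-and-weight integrals back to the pure weighted integrals already evaluated in the preceding subsection, and this is precisely the step needed to treat attractive and repulsive power laws simultaneously in one basis.
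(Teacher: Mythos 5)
Your proof is correct and is essentially the same as the paper's: the pointwise identity $|y|^{2}=R^{2}-(R^{2}-|y|^{2})$ combined with linearity of the integral is exactly the ``expanding one power of the weight in the highest-weight term'' manipulation the paper uses, just read in the opposite direction (as you yourself note). The remark on integrability of each term is a reasonable extra precaution but adds nothing beyond what the paper already assumes.
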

\begin{remark}
Lemma \ref{lemma3} tells us that on $B_R$ the Riesz potential of finite sums of terms of the form $|y|^{2k} (R^2-|y|^2)^{\ell-\frac{\alpha+d}{2}}$ for varying $k$ can be reduced to a sum depending exclusively on solutions of the $k=0$ case in Lemma \ref{lemma1} with increasing weight parameters. For example, the case $k=1$ may be evaluated as follows:
\begin{align*}
\int_{B_R} \left\lvert x-y \right\rvert^\alpha |y|^2 &(R^2-|y|^2)^{\ell-\frac{\alpha+d}{2}} \mathrm{d}y =\\ &\frac{\pi^{\frac{d}{2}}R^{2l+2}}{\Gamma\left(\tfrac{d}{2}\right)}B\left(\tfrac{\alpha+d}{2},\tfrac{2\ell+2-\alpha-d}{2}\right){}_2F_1\left(\begin{matrix}-\tfrac{\alpha}{2}, \quad -\ell, \\ \tfrac{d}{2}\end{matrix};\tfrac{|x|^2}{R^2}\right).\\
- &\frac{\pi^{\frac{d}{2}}R^{2l+2}}{\Gamma\left(\tfrac{d}{2}\right)}B\left(\tfrac{\alpha+d}{2},\tfrac{2\ell+4-\alpha-d}{2}\right){}_2F_1\left(\begin{matrix}-\tfrac{\alpha}{2}, \quad -\ell-1, \\ \tfrac{d}{2}\end{matrix};\tfrac{|x|^2}{R^2}\right).
\end{align*}
Note that even in the general case, we can evaluate all the terms on the right-hand side given $\alpha \in (-d,2+2\ell-d)$ and that if $\ell$ is chosen such that the $k=0$ term is a polynomial, i.e. $\ell\in\mathbb{N}$, then all higher order terms are also polynomials.
\end{remark}
The above results could in principle be used to design a dense spectral method for equilibrium measures in a basis of weighted monomials but such methods would be computationally expensive as well as significantly less robust. The above is easily modified to instead give a generic Jacobi polynomial recurrence:
\begin{lemma}\label{lemma:genericcasejacobi}
On the unit ball $B_1$, the Jacobi polynomials $P_n^{\left(\ell-\frac{\alpha+d}{2},\frac{d-2}{2}\right)}(2|y|^2-1)$ with weight $(1-|y|^2)^{\ell-\frac{\alpha+d}{2}}$ satisfy the following two-term recurrence relationship:
\begin{align*}
\int_{B_1}|x&-y|^\beta (1-|y|^2)^{\ell-\frac{\alpha+d}{2}} P_{n+1}^{(\ell-\frac{\alpha+d}{2},\frac{d-2}{2})}(2|y|^2-1) \mathrm{d}y =\\ & \tfrac{2n+2\ell-\alpha-d+2}{2n+2} \int_{B_1}|x-y|^\beta (1-|y|^2)^{\ell-\frac{\alpha+d}{2}} P_{n}^{(\ell-\frac{\alpha+d}{2},\frac{d-2}{2})}(2|y|^2-1) \mathrm{d}y\\
&-\tfrac{4 n +2 \ell+2-\alpha}{2 n+2} \int_{B_1}|x-y|^\beta (1-|y|^2)^{\ell+1-\frac{\alpha+d}{2}} P_{n}^{(\ell+1-\frac{\alpha+d}{2},\frac{d-2}{2})}(2|y|^2-1) \mathrm{d}y.
\end{align*}
\end{lemma}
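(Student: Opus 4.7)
The plan is to reduce this lemma to a purely pointwise identity between weighted shifted Jacobi polynomials, and then integrate against the kernel $|x-y|^\beta$. Specifically, observe that every integral appearing in the statement has the form $\int_{B_1}|x-y|^\beta \Phi(y)\,\mathrm{d}y$, so if I can prove that the integrands (i.e.\ the $\Phi(y)$'s) satisfy the same linear relation pointwise in $y$, the lemma follows by linearity of the integral. No potential-theoretic content is actually needed.

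First I would invoke the raising identity (\ref{eq:jacobiraisingA}) with parameters $a \mapsto \ell-\frac{\alpha+d}{2}$ and $b \mapsto \frac{d-2}{2}$, rearranged so that $P_{n+1}^{(a,b)}$ is isolated on the left-hand side:
\begin{equation*}
P_{n+1}^{(a,b)}(z) \;=\; \tfrac{n+a+1}{n+1}\,P_n^{(a,b)}(z) \;-\; \tfrac{2n+a+b+2}{2(n+1)}(1-z)\,P_n^{(a+1,b)}(z).
\end{equation*}
Next I would specialise to $z = 2|y|^2-1$, using $1-z = 2(1-|y|^2)$, so that the factor $(1-z)$ converts to $2(1-|y|^2)$ and effectively increases the weight power by one. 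Multiplying the whole identity through by $(1-|y|^2)^{\ell-\frac{\alpha+d}{2}}$ then produces an identity of the form
\begin{equation*}
(1-|y|^2)^{\ell-\frac{\alpha+d}{2}} P_{n+1}^{(a,b)}(2|y|^2-1) = A\,(1-|y|^2)^{\ell-\frac{\alpha+d}{2}}P_n^{(a,b)}(2|y|^2-1) - B\,(1-|y|^2)^{\ell+1-\frac{\alpha+d}{2}}P_n^{(a+1,b)}(2|y|^2-1),
\end{equation*}
with $A = \frac{n+a+1}{n+1}$ and $B = \frac{2n+a+b+2}{n+1}$ (the factor of $2$ arising from $1-z=2(1-|y|^2)$ has been absorbed into $B$). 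Substituting $a=\ell-\frac{\alpha+d}{2}$ and $b=\frac{d-2}{2}$ gives $A=\frac{2n+2\ell-\alpha-d+2}{2n+2}$ and $B=\frac{4n+2\ell+2-\alpha}{2n+2}$, exactly the coefficients in the claimed statement.

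Finally, multiply this pointwise identity by $|x-y|^\beta$ and integrate over $B_1$; by linearity the three integrals recombine into precisely the recurrence in the lemma. The only place where care is needed is the bookkeeping step between $A$, $B$ and the reparametrisation $a=\ell-\frac{\alpha+d}{2}$, $b=\frac{d-2}{2}$, since the factor $2$ coming from the change of variables $1-z = 2(1-|y|^2)$ must be tracked carefully; this is the sole \emph{subtlety}, but it is routine algebra rather than a genuine obstacle. Note that no restriction on $\beta$ (beyond integrability, $\beta>-d$) or on the parameter ranges of $\alpha$ and $\ell$ is needed for this step, because the argument is purely a pointwise rewriting of the integrand followed by integration.
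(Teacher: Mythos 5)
Your proposal is correct and follows essentially the same route as the paper's own proof: both isolate $P_{n+1}^{(a,b)}$ from the raising identity \eqref{eq:jacobiraisingA}, substitute $x\to 2|y|^2-1$ so that $1-x=2(1-|y|^2)$ raises the weight power by one, and conclude by linearity of the integral, arriving at the same coefficients $\tfrac{2n+2\ell-\alpha-d+2}{2n+2}$ and $\tfrac{4n+2\ell+2-\alpha}{2n+2}$. The factor-of-two bookkeeping you flag is handled identically in the paper, so there is nothing to add.
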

\begin{proof}
This is a consequence of the well known weight reduction recurrence relationship of the Jacobi polynomials in (\ref{eq:jacobiraisingB}--\ref{eq:jacobiraisingA}). In particular, with $x\rightarrow \left(2|y|^2-1\right)$ the recurrence in \eqref{eq:jacobiraisingA} takes the following form:
\begin{align*}
(1-|y|^2)P_n^{(a+1,b)}(2|y|^2-1)&=\tfrac{-(n+1)}{2(n+\frac{a}{2}+\frac{b}{2}+1)}P_{n+1}^{(a,b)}(2|y|^2-1)\\&+\tfrac{(n+a+1)}{2(n+\frac{a}{2}+\frac{b}{2}+1)}P_{n}^{(a,b)}(2|y|^2-1),
\end{align*}
which with some straightforward rearranging turns into
\begin{align*}
P_{n+1}^{(a,b)}(2|y|^2-1)&=\tfrac{(n+a+1)}{(n+1)}P_{n}^{(a,b)}(2|y|^2-1)\\&-\tfrac{2(n+\frac{a}{2}+\frac{b}{2}+1)}{(n+1)}(1-|y|^2)P_n^{(a+1,b)}(2|y|^2-1).
\end{align*}
The result then follows from linearity after plugging this into the weighted power law integral with appropriate parameters $(a,b)$.
\end{proof}
As before, advancing to step $n+1$ using this recurrence uses knowledge of the previous step $n$ in the same basis as well as one with higher weight parameter, meaning that the computational cost of this recurrence does not scale linearly with $n$. Normally such a general recurrence relationship which holds true irrespective of the structure of the kernel would be of little use, as the solution for high orders requires the luxury of extensive knowledge about initial results with higher weight terms, in particular for the $0$-th order terms. In the present case of the power law potential, however, exactly this knowledge is given by Lemma \ref{lemma2}. In conjunction with Theorem \ref{theorem:recurrencen1case} we could thus in principle use this to compute equilibrium measures even in the attractive-repulsive case where two potentials with different powers are present, while remaining in a consistent polynomial basis. In practice however this recurrence would also be a bad choice as it scales poorly with $n$ and suffers from numerical instability as the polynomial orders increase. To circumvent this problem, we first take a detour and look for a direct generic solution to the power law integral of Jacobi polynomials:
\begin{theorem}\label{theorem:hypergeomgeneral}
On the $d$-dimensional unit ball $B_1$ the power law potential, with power $\alpha \in(-d,2+2m-d)$, $m\in\mathbb{N}_0$ and $\beta>-d$, of the $n$-th weighted radial Jacobi polynomial $$(1-|y|^2)^{m-\frac{\alpha+d}{2}}P_n^{(m-\frac{\alpha+d}{2},\frac{d-2}{2})}(2|y|^2-1)$$ reduces to a Gaussian hypergeometric function as follows:
\begin{align*}
\int_{B_1}&|x-y|^\beta (1-|y|^2)^{m-\frac{\alpha+d}{2}} P_n^{(m-\frac{\alpha+d}{2},\frac{d-2}{2})}(2|y|^2-1) \mathrm{d}y\\
&= \tfrac{\pi ^{d/2} \Gamma \left(1+\frac{\beta}{2}\right) \Gamma \left(\frac{\beta+d}{2}\right) \Gamma \left(m+n-\frac{\alpha+d}{2}+1\right)}{\Gamma \left(\frac{d}{2}\right) \Gamma (n+1) \Gamma \left(\frac{\beta}{2}-n+1\right) \Gamma \left(\frac{\beta-\alpha}{2}+m+n+1\right)}{}_2F_1\left(\begin{matrix}n-\frac{\beta}{2}, \quad -m-n+\frac{\alpha-\beta}{2} \\\frac{d}{2}\end{matrix};|x|^2\right).
\end{align*}
\begin{proof}
Using the explicit representation found on the second line of \eqref{eq:radialseriesrep}, the left hand side power law integral can be rewritten
\begin{align*}
&\int_{B_1}|x-y|^\beta (1-|y|^2)^{m-\frac{\alpha+d}{2}} P_n^{(m-\frac{\alpha+d}{2},\frac{d-2}{2})}(2|y|^2-1) \mathrm{d}y \\
&= \tfrac{\Gamma \left(m+n-\frac{\alpha+d}{2}+1\right)}{n! \Gamma \left(m+n-\frac{\alpha}{2}\right)} \sum_{k=0}^n (-1)^k \binom{n}{k}\tfrac{\Gamma \left(m+n+k-\frac{\alpha}{2}\right)}{\Gamma \left(m+k-\frac{\alpha+d}{2}+1\right)} \int_{B_1}|x-y|^\beta (1-|y|^2)^{k+m-\frac{\alpha+d}{2}} \mathrm{d}y.
\end{align*}
Applying Corollary \ref{corr:colemma2}, we can resolve the integral despite the power of the weight and kernel being independent to obtain:
\begin{align*}
\tfrac{\pi ^{\frac{d}{2}}\Gamma \left(m+n-\frac{\alpha+d}{2}+1\right)}{n! \Gamma(\frac{d}{2}) \Gamma \left(m+n-\frac{\alpha}{2}\right)} \sum_{k=0}^n (-1)^k \binom{n}{k}\tfrac{\Gamma \left(m+n+k-\frac{\alpha}{2}\right)\Gamma \left(\frac{d+\beta}{2}\right)}{\Gamma \left(k+m+\frac{\beta-\alpha}{2}+1\right)} {}_2F_1\left(\begin{matrix}-\tfrac{\beta}{2}, -m-k-\tfrac{\beta-\alpha}{2} \\ \tfrac{d}{2}\end{matrix} ;|x|^2\right)
\end{align*}
In the above we expanded the Beta function into its Gamma function representation and performed the then obvious cancellation of terms. With some careful algebraic manipulations this can then be cast into the following form
\begin{align*}
\kappa_{m,n}^{\alpha,\beta,d} \sum_{k=0}^n (-1)^k \binom{n}{k}\tfrac{\left(1-\left(-n-m+\frac{\alpha}{2}+1\right)\right)_k}{\left(1-\left(-m-\frac{\beta-\alpha}{2}\right)\right)_k} {}_2F_1\left(\begin{matrix}-\tfrac{\beta}{2},\quad -m-k-\tfrac{\beta-\alpha}{2}\\\tfrac{d}{2}\end{matrix};|x|^2\right),
\end{align*}
with constant
\begin{align*}
\kappa_{m,n}^{\alpha,\beta,d} = \tfrac{\pi ^{\frac{d}{2}}\Gamma \left(m+n-\frac{\alpha+d}{2}+1\right)\Gamma \left(\frac{d+\beta}{2}\right)\Gamma \left(-\frac{\alpha}{2}+m+n\right)}{n! \Gamma(\frac{d}{2}) \Gamma \left(m+n-\frac{a}{2}\right)\Gamma \left(\frac{\beta-\alpha}{2}+m+1\right)}.
\end{align*}
In this form it is clear that the appearing sum is of the form of Equation \eqref{eq:prudnikovsum} with $\mathfrak{a}=-m-\tfrac{\beta-\alpha}{2}$ and $\mathfrak{b}=-n-m+\frac{\alpha}{2}+1$. Evaluation via Equation \eqref{eq:prudnikovsum} then yields
\begin{align}\label{eq:prudnikovfinalstep}
\nonumber \kappa_{m,n}^{\alpha,\beta,d} &\frac{(\mathfrak{b}-\mathfrak{a})_n}{(1-\mathfrak{a})_n} {}_3F_2\left(\begin{matrix}-\frac{\beta}{2},\quad \mathfrak{a}-\mathfrak{b}+1,\quad \mathfrak{a}-n \\ \frac{d}{2},\quad \mathfrak{a}-\mathfrak{b}-n+1\end{matrix};|x|^2\right)\\
&= \kappa_{m,n}^{\alpha,\beta,d} \frac{(1 + \frac{\beta}{2} - n)_n}{(\frac{\beta-\alpha}{2}+m+1)_n}{}_3F_2\left(\begin{matrix}-\frac{\beta}{2},\quad n-\frac{\beta}{2},\quad -m-n+\frac{\alpha-\beta}{2}\\ \frac{d}{2},\quad -\frac{\beta}{2}\end{matrix};|x|^2\right).
\end{align}
In general, any hypergeometric $_p F_q$ function with an equal upper and lower parameter reduces to a $_{p-1} F_{q-1}$ function, a fact that is straightforwardly seen from their series representation. Explicitly for the above appearing $_3F_2$ we have:
\begin{align*}
{}_3F_2\left(\begin{matrix}a_1, \quad a_2, \quad a_3\\c_1,\quad a_3\end{matrix};z\right) = {}_2F_1\left(\begin{matrix}a_1,\quad a_2 \\c_1\end{matrix};z\right),
\end{align*}
where we remind ourselves that $_p F_q$ functions are symmetric when exchanging upper with other upper parameters and likewise for the lower parameters.
Applying this to \eqref{eq:prudnikovfinalstep} and simplifying the constants concludes the proof of the theorem.
\end{proof}
\end{theorem}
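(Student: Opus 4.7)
The plan is to reduce the integral to a finite sum of simpler power-law integrals of the weight alone (with shifted exponent), evaluate each of them via Corollary \ref{corr:colemma2}, recognize the resulting sum as an instance of the Prudnikov--Brychkov--Marichev identity \eqref{eq:prudnikovsum}, and then collapse the resulting ${}_3F_2$ to a ${}_2F_1$ via a coincidence of upper and lower parameters.

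First I would use the explicit finite-sum representation of the shifted Jacobi polynomial (the radial analogue of \eqref{eq:jacobihyper}, written in the form recorded as equation \eqref{eq:radialseriesrep} and used already in the proof of Lemma~\ref{lemma:balljacobix0}) to expand $P_n^{(m-\frac{\alpha+d}{2},\frac{d-2}{2})}(2|y|^2-1)$ into a sum $\sum_{k=0}^{n}c_{n,k}(1-|y|^2)^{k}$ with explicit Pochhammer coefficients $c_{n,k}$. Combining this sum with the outer weight $(1-|y|^2)^{m-\frac{\alpha+d}{2}}$ converts the target integral into a linear combination of pure weight integrals
\[
\int_{B_1}|x-y|^\beta (1-|y|^2)^{k+m-\frac{\alpha+d}{2}}\,\mathrm{d}y ,
\]
whose parameters still lie in the admissible range thanks to the hypothesis $\alpha\in(-d,2+2m-d)$.

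Next, Corollary~\ref{corr:colemma2} evaluates each such integral in closed form as a Beta prefactor times a ${}_2F_1\!\left(\begin{smallmatrix}-\beta/2,\ -m-k-(\beta-\alpha)/2\\ d/2\end{smallmatrix};|x|^2\right)$. Substituting back and regrouping, the sum takes the shape
\[
\kappa_{m,n}^{\alpha,\beta,d}\sum_{k=0}^{n}(-1)^{k}\binom{n}{k}\frac{(1-\mathfrak{b})_{k}}{(1-\mathfrak{a})_{k}}\,{}_2F_1\!\left(\begin{matrix}-\tfrac{\beta}{2},\ \mathfrak{a}-k\\ \tfrac{d}{2}\end{matrix};|x|^2\right),
\]
with the identifications $\mathfrak{a}=-m-\tfrac{\beta-\alpha}{2}$ and $\mathfrak{b}=1-n-m+\tfrac{\alpha}{2}$; the bulk of the work here is bookkeeping in the Gamma/Pochhammer factors to verify that the ratio $(1-\mathfrak{b})_k/(1-\mathfrak{a})_k$ indeed reproduces the coefficients coming out of the Jacobi expansion. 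This identification is the step I expect to be the main obstacle: it requires careful algebra with Pochhammer symbols and a judicious choice of which Gamma factors to pull into the outer constant $\kappa_{m,n}^{\alpha,\beta,d}$.

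With the sum cast in this canonical form, the Prudnikov--Brychkov--Marichev identity \eqref{eq:prudnikovsum} (the $p=q=m=1$ case already displayed) converts it into a single ${}_3F_2$ in $|x|^2$. A direct computation of $\mathfrak{a}-\mathfrak{b}+1$, $\mathfrak{a}-n$ and $\mathfrak{a}-\mathfrak{b}-n+1$ shows that one of the upper parameters, namely $-\beta/2$, coincides with the lower parameter $\mathfrak{a}-\mathfrak{b}-n+1=-\beta/2$, so the ${}_3F_2$ collapses to a ${}_2F_1\!\left(\begin{smallmatrix}n-\beta/2,\ -m-n+(\alpha-\beta)/2\\ d/2\end{smallmatrix};|x|^2\right)$ via term-by-term cancellation in the defining series. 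Finally, collecting $\kappa_{m,n}^{\alpha,\beta,d}$ with the ratio $(\mathfrak{b}-\mathfrak{a})_n/(1-\mathfrak{a})_n$ and simplifying the resulting product of Gammas (using $(1+\beta/2-n)_n = \Gamma(1+\beta/2)/\Gamma(1+\beta/2-n)$ and the obvious cancellation of $\Gamma(m+n-\alpha/2)$) yields the stated prefactor, concluding the proof.
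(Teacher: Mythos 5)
Your proposal follows the paper's proof essentially verbatim: expand the shifted Jacobi polynomial via \eqref{eq:radialseriesrep}, evaluate each resulting weight integral with Corollary \ref{corr:colemma2}, identify the sum as the $p=q=m=1$ Prudnikov identity \eqref{eq:prudnikovsum} with the same $\mathfrak{a}$ and $\mathfrak{b}$, and collapse the ${}_3F_2$ to a ${}_2F_1$ through the coincidence of the parameter $-\beta/2$ above and below. The approach and all key identifications are correct and match the paper's argument.
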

\begin{remark}\label{rem:genericweightparam}
The specific form of Theorem \ref{theorem:hypergeomgeneral} was chosen to suit our needs but it is easy to see that a simple substitution yields the generic weight parameter form
\begin{align*}
\int_{B_1}&|x-y|^\beta (1-|y|^2)^{\lambda} P_n^{(\lambda,\frac{d-2}{2})}(2|y|^2-1) \mathrm{d}y\\
&= \tfrac{\pi ^{d/2} \Gamma \left(1+\frac{\beta}{2}\right) \Gamma \left(\frac{\beta+d}{2}\right) \Gamma \left(\lambda+n+1\right)}{\Gamma \left(\frac{d}{2}\right) \Gamma (n+1) \Gamma \left(\frac{\beta}{2}-n+1\right) \Gamma \left(\frac{\beta+d}{2}+\lambda+n+1\right)}{}_2F_1\left(\begin{matrix} n-\frac{\beta}{2},\quad -\lambda-n-\frac{\beta+d}{2} \\ \frac{d}{2}\end{matrix};|x|^2\right).
\end{align*}
without any change to the proof.
\end{remark}
The importance of Theorem \ref{theorem:hypergeomgeneral} for our method is that it can be used in conjunction with highly efficient and accurate implementations for the computation of the Gaussian hypergeometric functions, as implemented in HypergeometricFunctions.jl \cite{juliamathhypergeometricfunctions_2020}, to compute the operator entries even when $\alpha \neq \beta$. The specific hypergeometric function algorithm we relied on for the numerical experiments in this paper is the one discussed in \cite{pearson_numerical_2017}. This can be done either by directly expanding the $n$-th variant of the formula or preferably by using the following recurrence relationship where the basis and kernel powers are now decoupled:
\begin{corollary}\label{cor:generalrecurrence}
On the unit ball $B_1$, the power law integral of the Jacobi polynomials $P_n^{\left(m-\frac{\alpha+d}{2},\frac{d-2}{2}\right)}(2|y|^2-1)$ with weight $(1-|y|^2)^{m-\frac{\alpha+d}{2}}$, $\alpha \in(-d,2+2m-d)$ and $\beta >-d$ satisfies the following three term recurrence relationship:
\begin{align*}
\int_{B_1} |x-y|^\beta &(1-|y|^2)^{m-\frac{\alpha+d}{2}} P_{n+1}^{(m-\frac{\alpha+d}{2},\frac{d-2}{2})}(2|y|^2-1) \mathrm{d}y\\ &= (\mathfrak{c}_a |x|^2 + \mathfrak{c}_b) \int_{B_1}|x-y|^\beta (1-|y|^2)^{m-\frac{\alpha+d}{2}} P_n^{(m-\frac{\alpha+d}{2},\frac{d-2}{2})}(2|y|^2-1) \mathrm{d}y \\ &+ \mathfrak{c}_c \int_{B_1}|x-y|^\beta (1-|y|^2)^{m-\frac{\alpha+d}{2}} P_{n-1}^{(m-\frac{\alpha+d}{2},\frac{d-2}{2})}(2|y|^2-1) \mathrm{d}y,
\end{align*}
where
\begin{align*}
\mathfrak{c}_a &= -\tfrac{(-\alpha +2 m+4 n) (-\alpha +2 m+4 n+2) (\alpha +d-2 (m+n+1))}{2 (n+1) (-\alpha +\beta +2 m+2 n+2) (-\alpha +\beta +d+2 m+2 n)},\\
\mathfrak{c}_b &= -\tfrac{(-\alpha +2 m+4 n) (\alpha +d-2 (m+n+1)) (d (-\alpha +2 \beta +2 m+2)-2 (2 n-\beta ) (-\alpha +\beta +2 m+2 n))}{2 (n+1) (-\alpha +2 m+4 n-2) (-\alpha +\beta +2 m+2 n+2) (-\alpha +\beta +d+2 m+2 n)}, \\
\mathfrak{c}_c &= \tfrac{(-\beta +2 n-2) (\beta +d-2 n) (-\alpha +2 m+4 n+2) (\alpha +d-2 (m+n)) (\alpha +d-2 (m+n+1))}{4 n (n+1) (-\alpha +2 m+4 n-2) (-\alpha +\beta +2 m+2 n+2) (-\alpha +\beta +d+2 m+2 n)}.
\end{align*}
\end{corollary}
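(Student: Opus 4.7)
The plan is to apply Theorem \ref{theorem:hypergeomgeneral} to each of the three integrals appearing in the recurrence so that the whole identity becomes a contiguous relation among Gaussian hypergeometric functions. Writing
\[
I_n:=\int_{B_1}|x-y|^\beta(1-|y|^2)^{m-\frac{\alpha+d}{2}}P_n^{(m-\frac{\alpha+d}{2},\frac{d-2}{2})}(2|y|^2-1)\,\mathrm{d}y,
\]
Theorem \ref{theorem:hypergeomgeneral} yields $I_n=\kappa_n\,G_n(|x|^2)$, where
\[
G_n(z):=\F(n-\tfrac{\beta}{2},-m-n+\tfrac{\alpha-\beta}{2},\tfrac{d}{2};z),
\]
and $\kappa_n$ is the explicit Gamma-function prefactor from that theorem. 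Substituting this into the target identity and dividing through by $\kappa_{n+1}$ reduces the claim to showing
$G_{n+1}(z)=(\tilde{\mathfrak c}_a z+\tilde{\mathfrak c}_b)G_n(z)+\tilde{\mathfrak c}_c G_{n-1}(z)$,
that is, a three-term contiguous relation among ${}_2F_1$'s whose upper parameters jump by $(+1,-1)$ and $(-1,+1)$ as $n\mapsto n\pm 1$, with the lower parameter $d/2$ fixed.

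Next I would establish this contiguous relation. The structural fact to exploit is that $a_n+b_n=\alpha/2-\beta-m$ is independent of $n$, so the two upper parameters of $G_n$ move along a fixed anti-diagonal. A relation of the required shape can be assembled by composing two of Gauss's fifteen contiguous relations \cite[\S 15.5]{nist_2018}, for example one shifting only $a$ and one shifting only $b$, and then eliminating the off-anti-diagonal contiguous functions that appear in the intermediate step so that only $F(a+1,b-1;c;z)$, $F(a,b;c;z)$ and $F(a-1,b+1;c;z)$ survive. A self-contained alternative is to substitute the defining series $G_n(z)=\sum_k\frac{(a_n)_k(b_n)_k}{(d/2)_k\,k!}z^k$ into the ansatz $G_{n+1}=(A_n z+B_n)G_n+C_n G_{n-1}$ and match coefficients of $z^k$; applying $(x+1)_k=\frac{x+k}{x}(x)_k$ to both $a_n$ and $b_n$ (and the corresponding identity for shifts by $-1$) collapses the coefficient-matching condition to a polynomial identity in $k$ that is straightforward to solve for $A_n,B_n,C_n$ as rational functions of $n,m,\alpha,\beta,d$.

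The remaining step is bookkeeping. Applying $\Gamma(x+1)=x\Gamma(x)$ to the explicit form of $\kappa_n$ gives closed forms for the shift ratios $\kappa_n/\kappa_{n+1}$ and $\kappa_{n-1}/\kappa_{n+1}$; multiplying the contiguous relation through by these ratios converts the triple $(\tilde{\mathfrak c}_a,\tilde{\mathfrak c}_b,\tilde{\mathfrak c}_c)$ into the stated constants $(\mathfrak c_a,\mathfrak c_b,\mathfrak c_c)$. The range of validity $\alpha\in(-d,2+2m-d)$ is inherited from Theorem \ref{theorem:hypergeomgeneral} with no extra work, so no additional analytic input is required. The main obstacle is thus not any subtle step but the rational-function bookkeeping: combining Gauss's contiguous relations and multiplying by the $\kappa$-ratios leaves sizeable rational expressions in $n,m,\alpha,\beta,d$, and verifying that they collapse into the compact closed forms for $\mathfrak c_a,\mathfrak c_b,\mathfrak c_c$ quoted in the statement is most reliably carried out in a symbolic algebra system, mirroring the strategy already used to pass from Theorem \ref{theorem:diagonalformula} to Corollary \ref{corr:diagonality}.
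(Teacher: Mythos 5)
Your proposal follows the paper's own route exactly: apply Theorem \ref{theorem:hypergeomgeneral} to each integral, observe that the upper parameters of the resulting ${}_2F_1$'s shift by $(+1,-1)$ as $n\mapsto n+1$ with the lower parameter $d/2$ fixed, invoke the corresponding contiguous three-term relation, and absorb the ratios of the Gamma-function prefactors into the constants. The paper simply quotes the needed $(a+1,b-1)\leftrightarrow(a,b)\leftrightarrow(a-1,b+1)$ relation explicitly (noting it is derivable from the standard contiguous relations, as you describe), so your argument is correct and essentially identical.
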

\begin{proof}
This is obtained by plugging the appropriate parameters obtained in Theorem \ref{theorem:hypergeomgeneral} into the following recurrence relationship for Gaussian hypergeometric functions:
\begin{align*}
\, _2F_1\left(\begin{matrix}a+1,\quad b-1 \\ c\end{matrix};z\right) &= \tfrac{(a-b) \left(z \left((a-b)^2-1\right)+2 a b-a c-b c+c\right)}{a (a-b-1) (b-c)}\, _2F_1\left(\begin{matrix}a,\quad b\\c\end{matrix};z\right)\\
 &-\tfrac{b (a-b+1) (a-c)}{a (a-b-1) (b-c)}\, _2F_1\left(\begin{matrix}a-1,\quad b+1 \\ c\end{matrix};z\right).
\end{align*}
The fact that there exist recurrence relationships for Gaussian hypergeometric functions of form $_2F_1\left( \begin{matrix}a+\epsilon_1 n, \quad b+\epsilon_2 n \\ c+\epsilon_3 n\end{matrix};z\right)$ with $\epsilon_i \in \{-1,0,1\}$ which can be derived from contiguous relations \cite[15.5.11--15.5.18]{nist_2018} is widely known, cf. \cite{gil2006abc,gil2007numerically,pearson2017numerical} and the references therein, and Mathematica code for the symbolic generation of such recurrences is available \cite{ibrahim2008contiguous}.
\end{proof}
\begin{remark}\label{rem:remarkbanded}
The special case where the kernel and weight terms are related via $\alpha = \beta$ results in a polynomial right hand side, meaning by Theorem \ref{theorem:hypergeomgeneral} the operator must have finitely many non-zero subdiagonal bands and by the three term recurrence in Corollary \ref{cor:generalrecurrence} it then also has finitely many superdiagonal non-zero bands. The bandwidth of the operator in the polynomial case for any specific parameter $\alpha=\beta$ can be determined by the order of the right hand side polynomial when $n=0$. The minimal bandwidth that is achievable using Jacobi polynomials is determined by the range of validity $\beta \in(-d,2+2m+\beta-\alpha-d)$, i.e. for $\alpha = \beta$ and $\alpha \in (-d,2-d)$ we can get diagonal operators, for $\alpha \in (2-d,4-d)$ we can at best get tridiagonal operators and so on, consistent with the results of the previous sections.
\end{remark}
\subsection{The mass condition in arbitrary dimension}\label{sec:themasscondition}
As mentioned in section \ref{sec:introequilibrium}, equilibrium measure problems of the form discussed in this paper only become well-posed with the addition of a mass condition, i.e. by demanding that the equilibrium measure must satisfy
\begin{align*}
\int_{B_R} \rho(y) \D y = M
\end{align*}
on its domain of support $B_R$, the $d$-dimensional ball of radius $R$. In the computational setting this will require us to be able to evaluate the unit ball integral of sufficiently well-behaved functions expanded in a weighted basis of radial Jacobi polynomials. 
\begin{lemma}
Let $\rho(y)=\rho(|y|^2)$, $y\in B_1$ be a function such that $\int_{B_1} \rho(y) \mathrm{d}y = M_1$, with $M_1$ constant. Furthermore, we assume that its expansion
$$\rho(y) = \sum_{n=0}^\infty \rho_n (1-|y|^2)^a P_n^{(a,\frac{d-2}{2})}(2|y|^2-1)$$
in weighted radial Jacobi polynomials on the $d$-dimensional unit ball $B_1$ exists and satisfies $\int_{B_1} \sum_{n=0}^{\infty} |\rho_n (1-|y|^2)^a P_n^{(a,\frac{d-2}{2})}(2|y|^2-1)| \mathrm{d}y < \infty$. Then the value of $M_1$ is determined entirely by the $0$-th coefficient, that is:
\begin{align*}
M_1 = \int_{B_1} \rho(y) \D y = \frac{\pi^{\frac{d}{2}}\Gamma (a+1)}{\Gamma \left(a+\frac{d}{2}+1\right)} \rho_0.
\end{align*}
\end{lemma}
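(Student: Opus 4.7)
The plan is to reduce the $d$-dimensional ball integral to a one-dimensional integral over the Jacobi weight $(1-t)^a(1+t)^{(d-2)/2}$ and then exploit orthogonality of the Jacobi polynomials against the constant $P_0^{(a,(d-2)/2)} \equiv 1$, so that only the $n=0$ term survives.

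Concretely, I would first pass to spherical coordinates, using the fact that the surface area of the unit sphere in $\mathbb{R}^d$ equals $2\pi^{d/2}/\Gamma(d/2)$, and since $\rho$ is radial,
\begin{equation*}
\int_{B_1}\rho(y)\,\D y = \frac{2\pi^{d/2}}{\Gamma(d/2)}\int_0^1 \rho(r)\,r^{d-1}\,\D r.
\end{equation*}
Next I would apply the substitution $t = 2r^2-1$, so that $1-r^2=(1-t)/2$, $r^2=(1+t)/2$, and $r^{d-1}\,\D r = \frac{1}{4}\big(\tfrac{1+t}{2}\big)^{(d-2)/2}\,\D t$. This converts the radial weight to exactly the Jacobi weight $(1-t)^a(1+t)^{(d-2)/2}$ up to an overall factor of $2^{-(a+d/2+1)}$.

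Then I would interchange the sum and integral, justified by the hypothesis that $\int_{B_1}\sum_n |\rho_n(1-|y|^2)^a P_n^{(a,(d-2)/2)}(2|y|^2-1)|\,\D y <\infty$ (Fubini--Tonelli). Applying the orthogonality relation \eqref{eq:orthogonalityconditionJacobi} with $b=(d-2)/2$ against $P_0^{(a,(d-2)/2)}\equiv 1$, every $n\geq 1$ term vanishes, and the $n=0$ term contributes the normalization
\begin{equation*}
\int_{-1}^1 (1-t)^a(1+t)^{(d-2)/2}\,\D t = \frac{2^{a+d/2}\,\Gamma(a+1)\,\Gamma(d/2)}{\Gamma(a+d/2+1)}.
\end{equation*}

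The only remaining step is routine bookkeeping of the constants: combining the spherical-shell factor $2\pi^{d/2}/\Gamma(d/2)$, the Jacobian factor $2^{-(a+d/2+1)}$, and the Beta-function value above, the $\Gamma(d/2)$ and the powers of $2$ cancel, leaving exactly $\rho_0\,\pi^{d/2}\Gamma(a+1)/\Gamma(a+d/2+1)$. I do not anticipate any genuine obstacle; the only thing to be careful about is tracking the factor of $2$ correctly through the substitution and ensuring the integrability hypothesis is invoked explicitly when swapping the sum with the integral.
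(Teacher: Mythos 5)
Your proposal is correct and follows essentially the same route as the paper's proof: hyperspherical coordinates, the substitution $t=2r^2-1$ to recover the Jacobi weight $(1-t)^a(1+t)^{(d-2)/2}$, a Fubini--Tonelli justification for swapping the sum and the integral, and orthogonality against $P_0^{(a,\frac{d-2}{2})}\equiv 1$ so that only the $n=0$ term survives. Your bookkeeping of the constants (the factor $2^{-(a+d/2+1)}$ from the Jacobian and the normalization $2^{a+d/2}\Gamma(a+1)\Gamma(d/2)/\Gamma(a+d/2+1)$) also matches the paper's computation exactly.
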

\begin{proof}
The domain and radial symmetry of this problem suggests the use of hyperspherical coordinates:
\begin{align*}
M_1 = \int_{B_1} \rho(y) dy &= \sum_{n=0}^\infty \rho_{n} \int_{B_1} (1-|y|^2)^a P_n^{(a,\frac{d-2}{2})}(2|y|^2-1) dy\\
&=\sum_{n=0}^\infty \rho_{n} \int_{S^{d-1}} \int_{r=0}^1 (1-r^2)^a P_n^{(a,\frac{d-2}{2})}(2r^2-1) r^{d-1} dr d\sigma(\omega)\\
&= \sigma(S^{d-1}) \sum_{n=0}^\infty \rho_{n} \int_{r=0}^1 (1-r^2)^a P_n^{(a,\frac{d-2}{2})}(2r^2-1) r^{d-1} dr,
\end{align*}
where $\sigma(S^{d-1}) = \frac{2\pi^\frac{d}{2}}{\Gamma(\frac{d}{2})}$ is the surface area of the $d-1$ dimensional sphere. The exchange of integration and infinite sum in the first line is justified by the Fubini-Tonelli theorem. The integral in the resulting expression can be evaluated easily by reversing the quadratic shift of the Jacobi polynomials via transformation $2r^2-1 = t$:
\begin{align*}
\sum_{n=0}^\infty \rho_{n} \int_0^1 (1-r^2)^a &P_n^{(a,\frac{d-2}{2})}(2r^2-1) r^{d-1} dr \\&= \frac{1}{4} \sum_{n=0}^\infty \rho_{n} \int_{-1}^1 \frac{(1-t)^a}{2^a}\frac{(1+t)^{\frac{d-2}{2}}}{2^{\frac{d-2}{2}}} P_n^{(a,\frac{d-2}{2})}(t) dt\\
 &= 2^{\frac{-2-2a-d}{2}} \sum_{n=0}^\infty \rho_{n} \int_{-1}^1 (1-t)^a (1+t)^{\frac{d-2}{2}} P_n^{(a,\frac{d-2}{2})}(t) dt\\
 &= \frac{\Gamma (a+1) \Gamma \left(\frac{d}{2}\right)}{2 \Gamma \left(a+\frac{d}{2}+1\right)} \rho_0,
\end{align*}
where the last equality relies on the classical orthogonality condition of the Jacobi polynomials in Equation \eqref{eq:orthogonalityconditionJacobi} where we remind ourselves that $P_0(t) = 1$. Combining this with the prior expression yields the stated result.
\end{proof}
\section{Description of the numerical method}\label{sec:methoddescription}
In the previous sections we derived recurrence relationships and explicit representations of the power law potential of Jacobi polynomials on $d$ dimensional balls. We now detail how these results can be used to produce banded and approximately banded operators to efficiently compute the power law potential of a given rotationally symmetric function $f(r)$ and then discuss how these tools can be used to solve equilibrium measure problems.
\subsection{Operator sparsity structure for classical Jacobi polynomials}\label{sec:sparsitystructure}
We have seen in section \ref{sec:subsecpowerbanded} that we can achieve banded power law integral operators acting on coefficient vectors of weighted radial Jacobi polynomials by making an appropriate basis choice. In particular we have seen that for a given power $\alpha \in (2\ell-d,2+2\ell-d)$ with $\ell \in \mathbb{N}_0$ in $d$ dimensions, the choice of basis $$(1-|y|^2)^{\ell-\frac{\alpha+d}{2}}P_n^{(\ell-\frac{\alpha+d}{2},\frac{d-2}{2})}(2|y|^2-1)$$ gives banded operators with exactly $2\ell+1$ bands. We present some illustrative examples of this in terms of matrix spy plots in Figure \ref{fig:spy_banded}. Using Theorem \ref{theorem:diagonalformula}, Theorem \ref{theorem:recurrencen1case} and the straightforward generalizations one can generate these operators with very high efficiency.\\
\begin{figure}
     \subfloat[$\alpha = -\pi, d = 5$]
    {{ \centering \includegraphics[width=4cm]{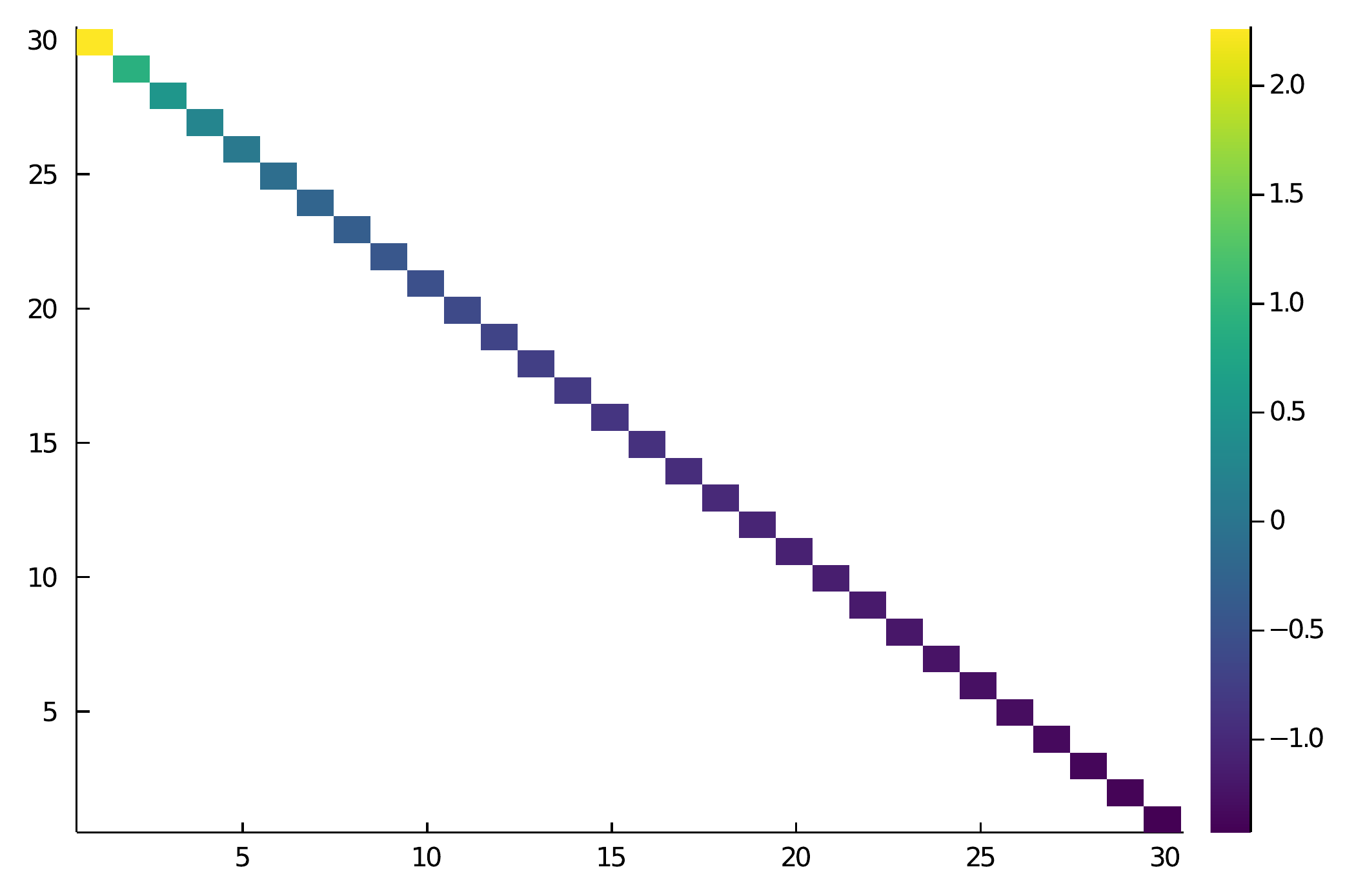} }}
         \subfloat[$\alpha = 0.5, d = 2$]
    {{ \centering \includegraphics[width=4cm]{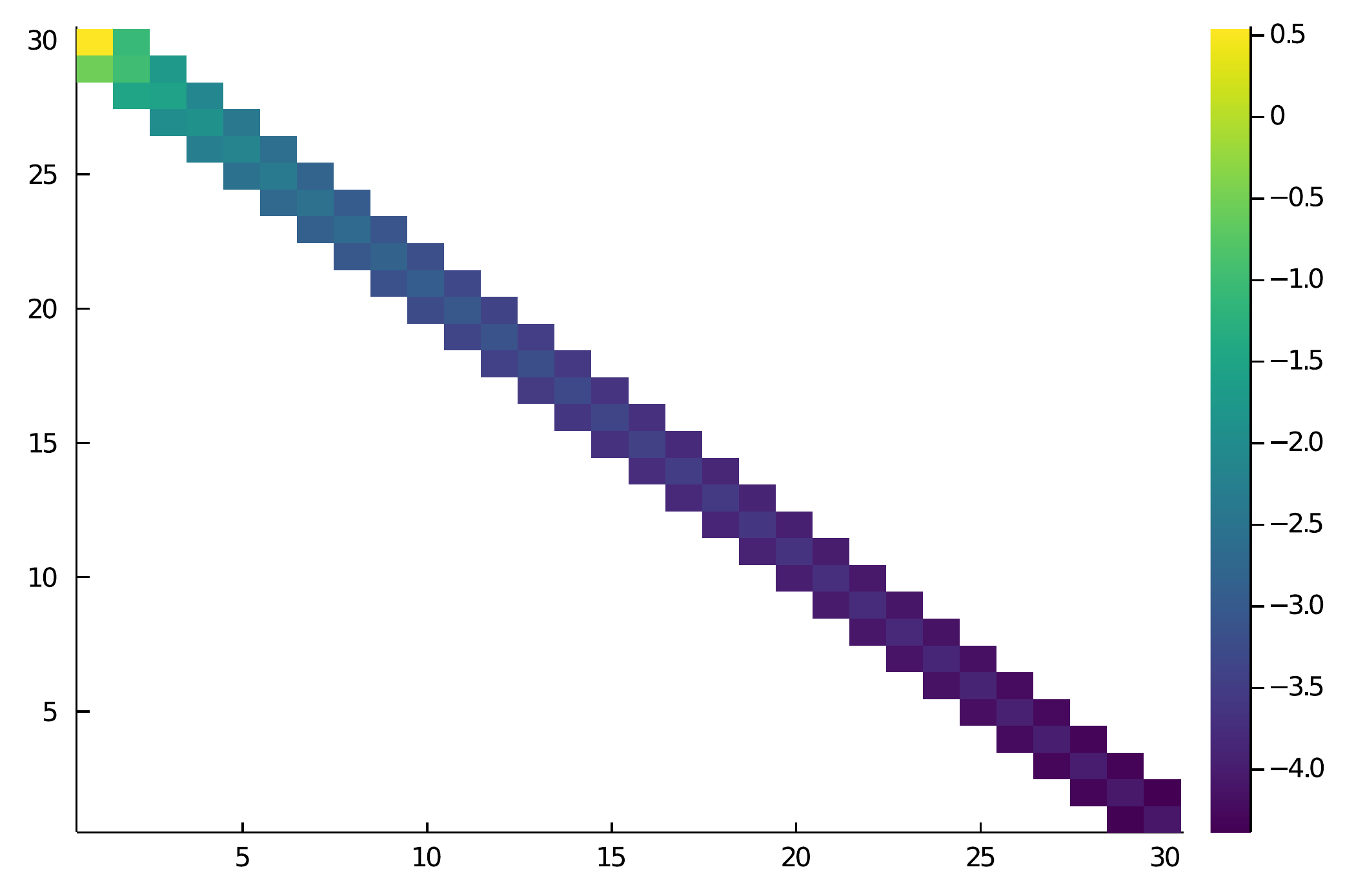} }}
             \subfloat[$\alpha = 3.6, d = 3$]
    {{ \centering \includegraphics[width=4cm]{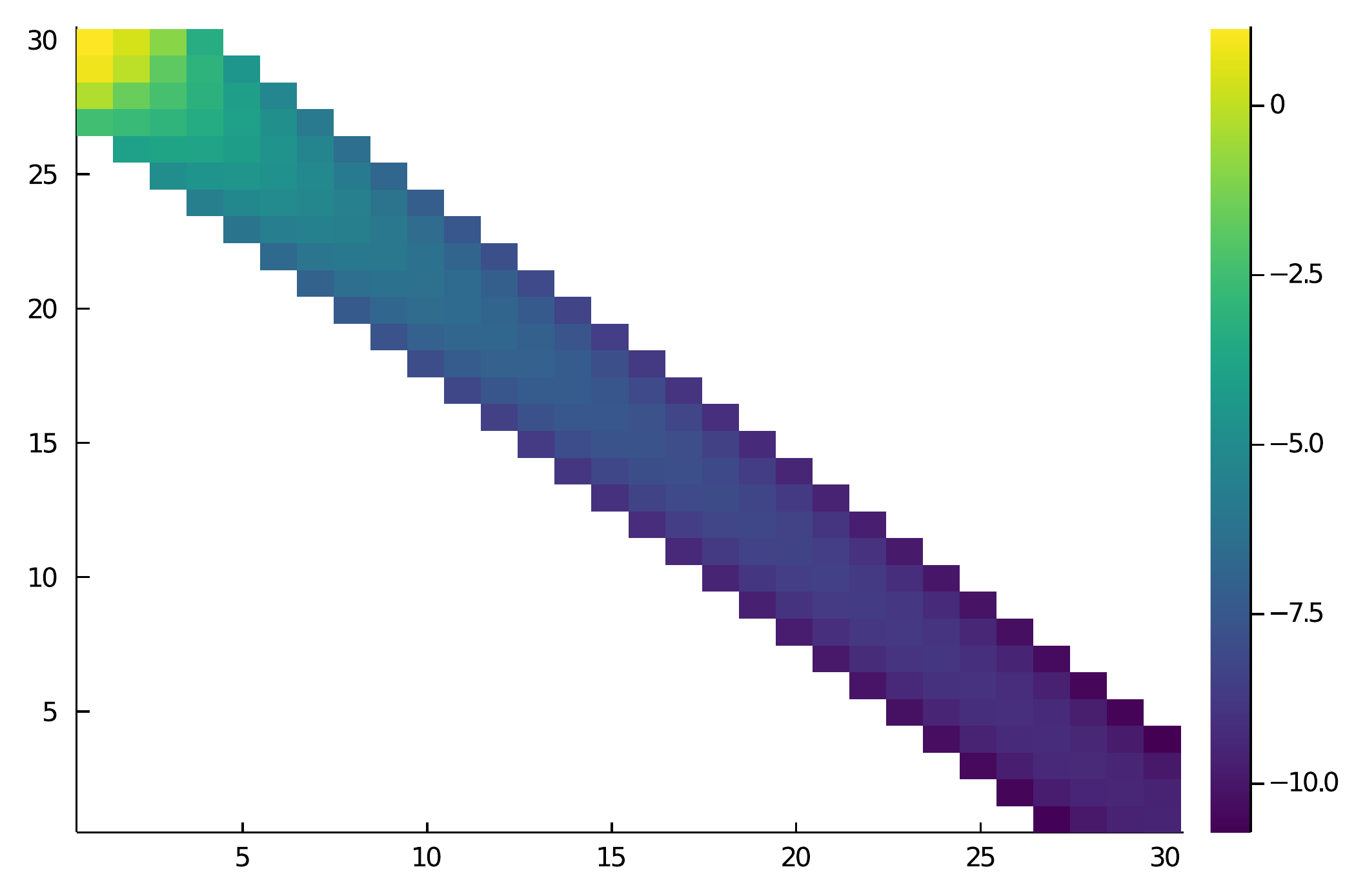} }}
    \caption{Banded operator spy plots for various values of $\alpha$ in different dimensions. The legend is logarithmic and indicates the order of magnitude of the entries.}%
    \label{fig:spy_banded}
    \end{figure}
An important and interesting special case occurs when $\alpha$ is an even integer, as can e.g. be seen from the results in Theorem \ref{theorem:hypergeomgeneral} and Corollary \ref{cor:generalrecurrence}, namely the power law operator only has finitely many non-zero values, the exact number depending only on the value of $\alpha$, see the example matrix spy plots in Figure \ref{fig:spy_evenint}. This is consistent with what was shown in one-dimension for ultraspherical polynomials in \cite{gutleb_computing_2020}. In the attractive-repulsive case this special case result means that we can obtain a simultaneously banded operator for any $\beta$ for which solutions exist.\\
\begin{figure}
     \subfloat[$\alpha = 4, d = 2$]
    {{ \centering \includegraphics[width=4cm]{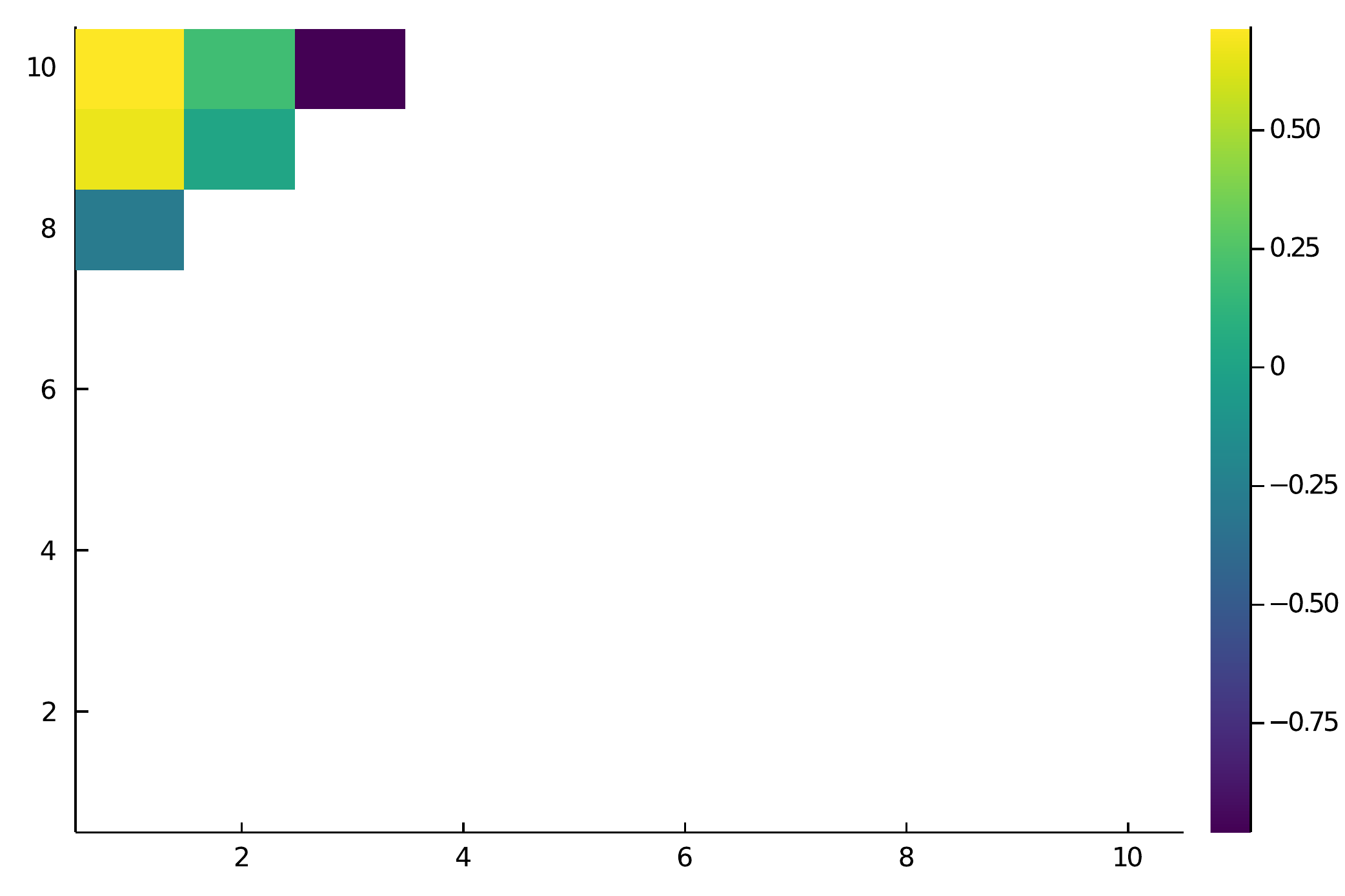} }}
         \subfloat[$\alpha = 6, d = 3$]
    {{ \centering \includegraphics[width=4cm]{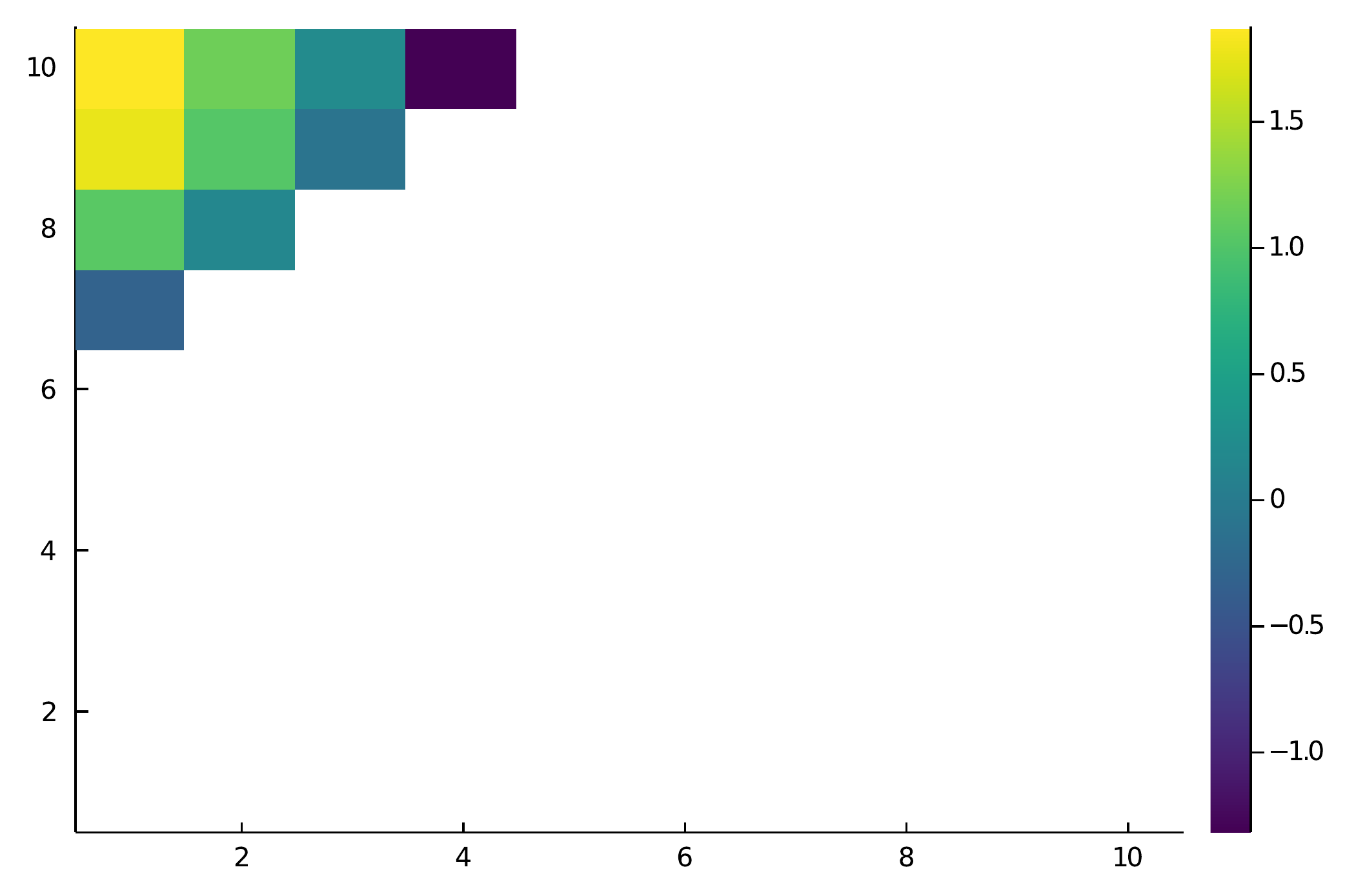} }}
             \subfloat[$\alpha = 4, d = 4$]
    {{ \centering \includegraphics[width=4cm]{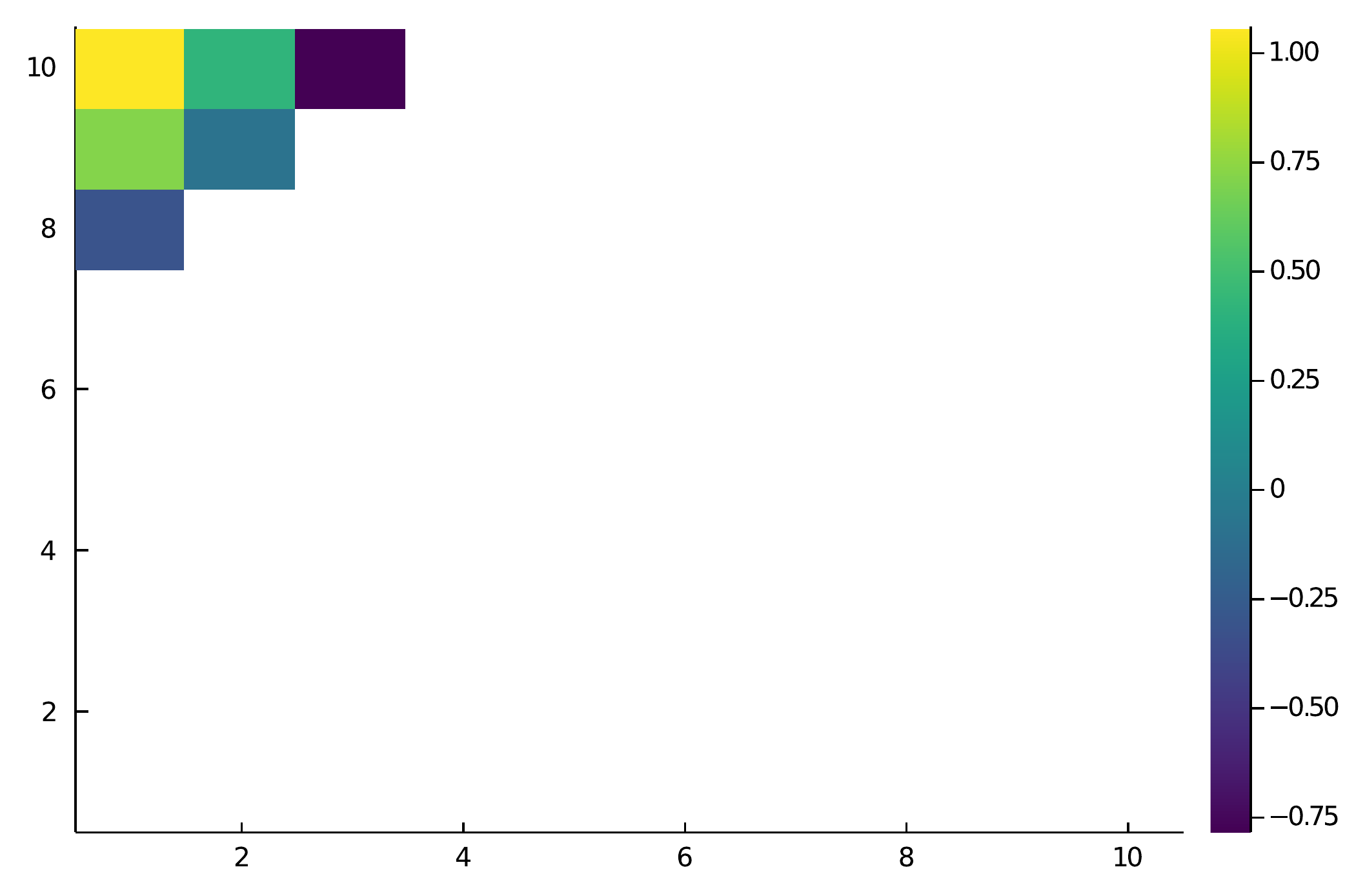} }}
    \caption{Operator spy plots for even integer $\alpha$ in different dimensions. The legend is logarithmic and indicates the order of magnitude of the entries.}%
    \label{fig:spy_evenint}%
\end{figure}
When considering the general attractive-repulsive equilibrium measure problem, one has to consider both power law integrals in a consistent basis in order to apply spectral methods. To do this, we choose the basis such that the attractive $\alpha$-operator is banded as above, then generate the attractive $\beta$-operator in the same basis by using Theorem \ref{theorem:hypergeomgeneral} and Corollary \ref{cor:generalrecurrence}. Again consistent with what was previously observed in one-dimension for ultraspherical polynomials \cite{gutleb_computing_2020}, these operators are approximately banded and decay off the main band, see the example matrix spy plots in Figure \ref{fig:spy_approxbanded}. Improving on the understanding of the one-dimensional case, the general expression obtained in Theorem \ref{theorem:hypergeomgeneral} explains the approximate bandedness as each column of the operator represents the coefficients of an expansion of the polynomial order dependent Gaussian hypergeometric function in a Jacobi polynomial basis. As we have obtained a recurrence relationship for this general case it is possible in practice to compute a banded approximation for a chosen bandwidth without the wasteful step of having to compute the dense operator first. That being said, suitable convergence is generally obtained already for sizes were dense matrix computations are still feasible.
\begin{figure}
     \subfloat[$\alpha = -\pi, \beta =-2.4 , d = 5$]
    {{ \centering \includegraphics[width=4cm]{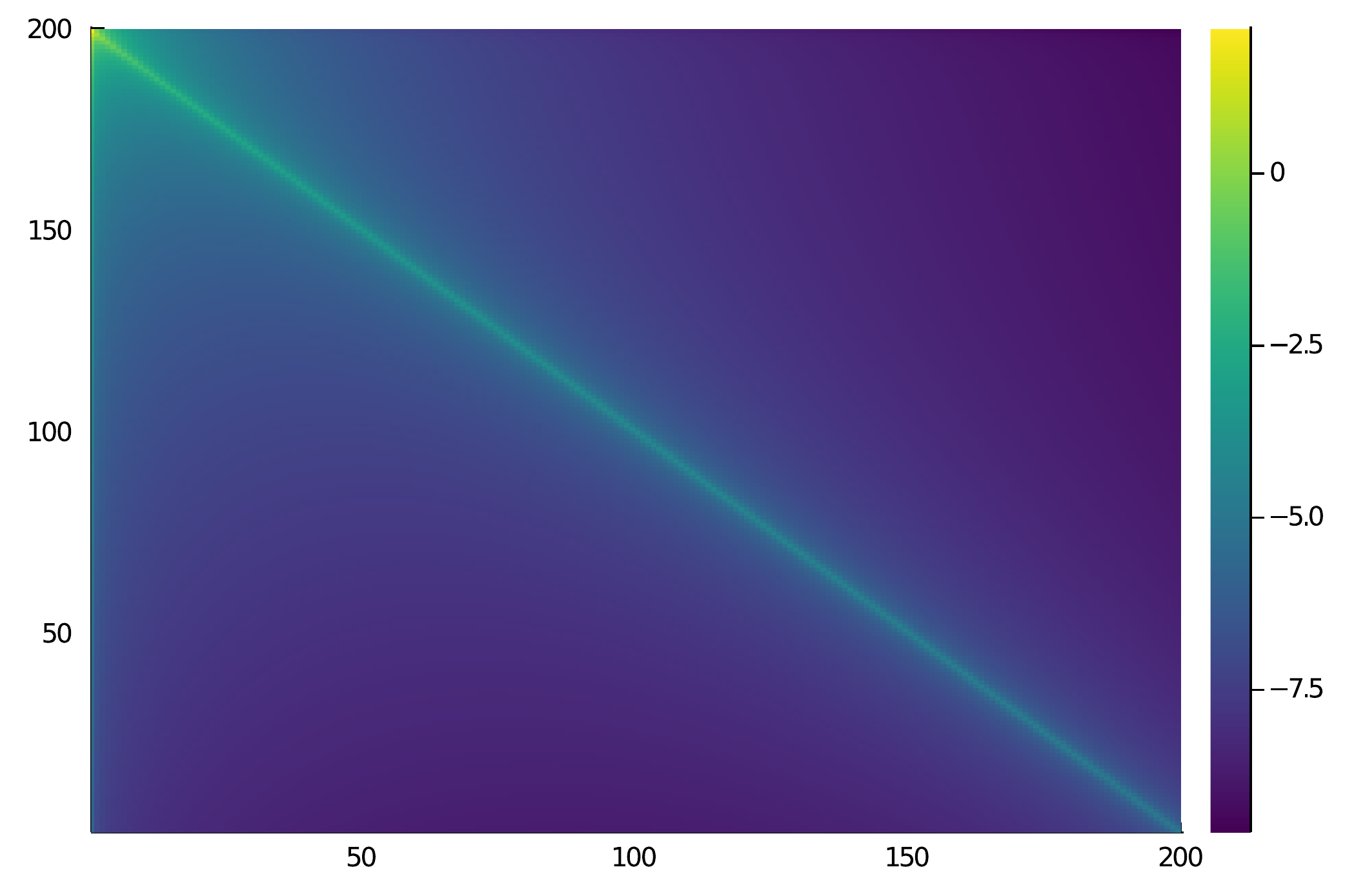} }}
         \subfloat[$\alpha = 1.9, \beta = 0.5, d = 2$]
    {{ \centering \includegraphics[width=4cm]{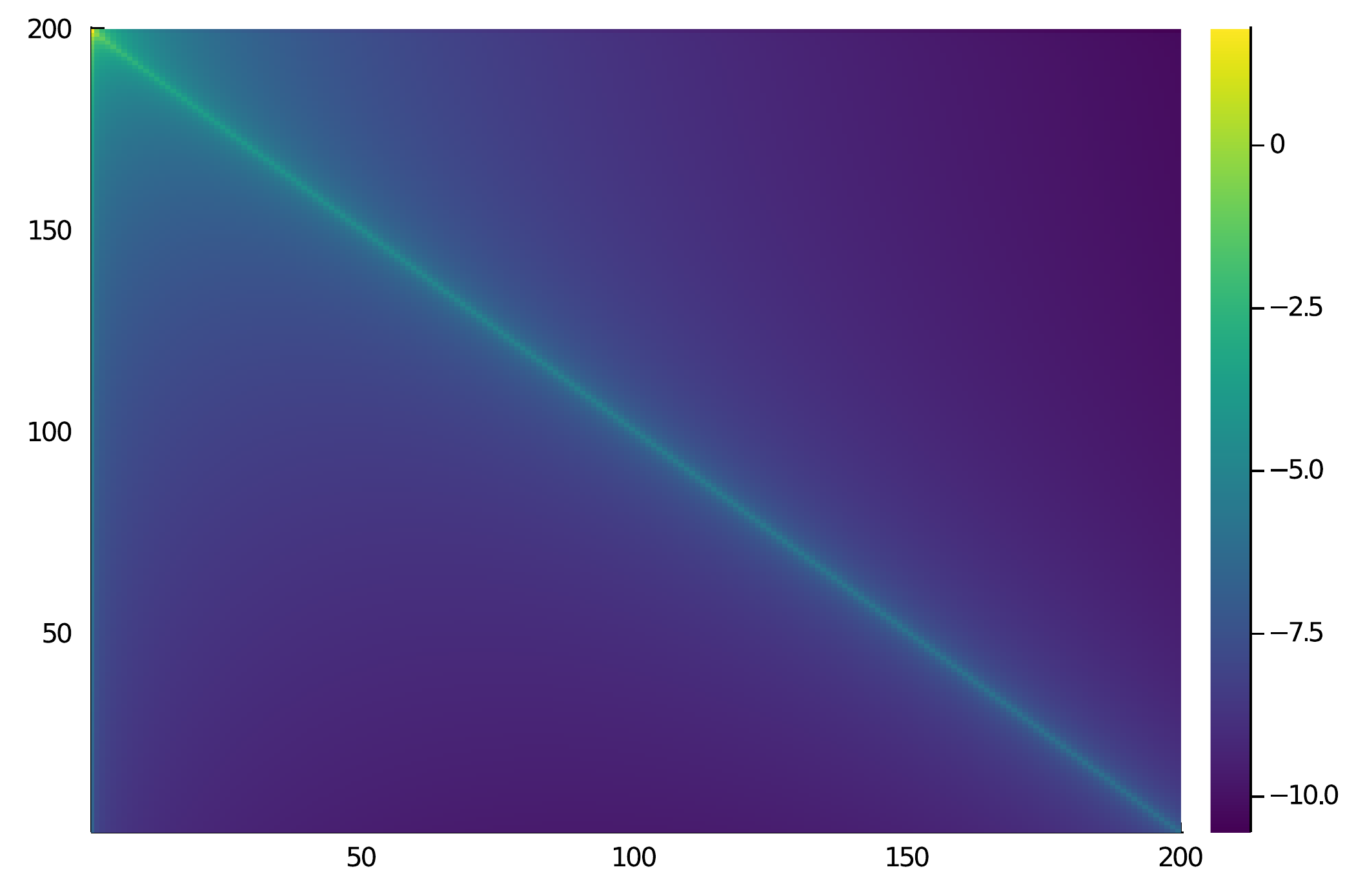} }}
             \subfloat[$\alpha = 3.76, \beta = -\frac{2}{3}, d = 3$]
    {{ \centering \includegraphics[width=4cm]{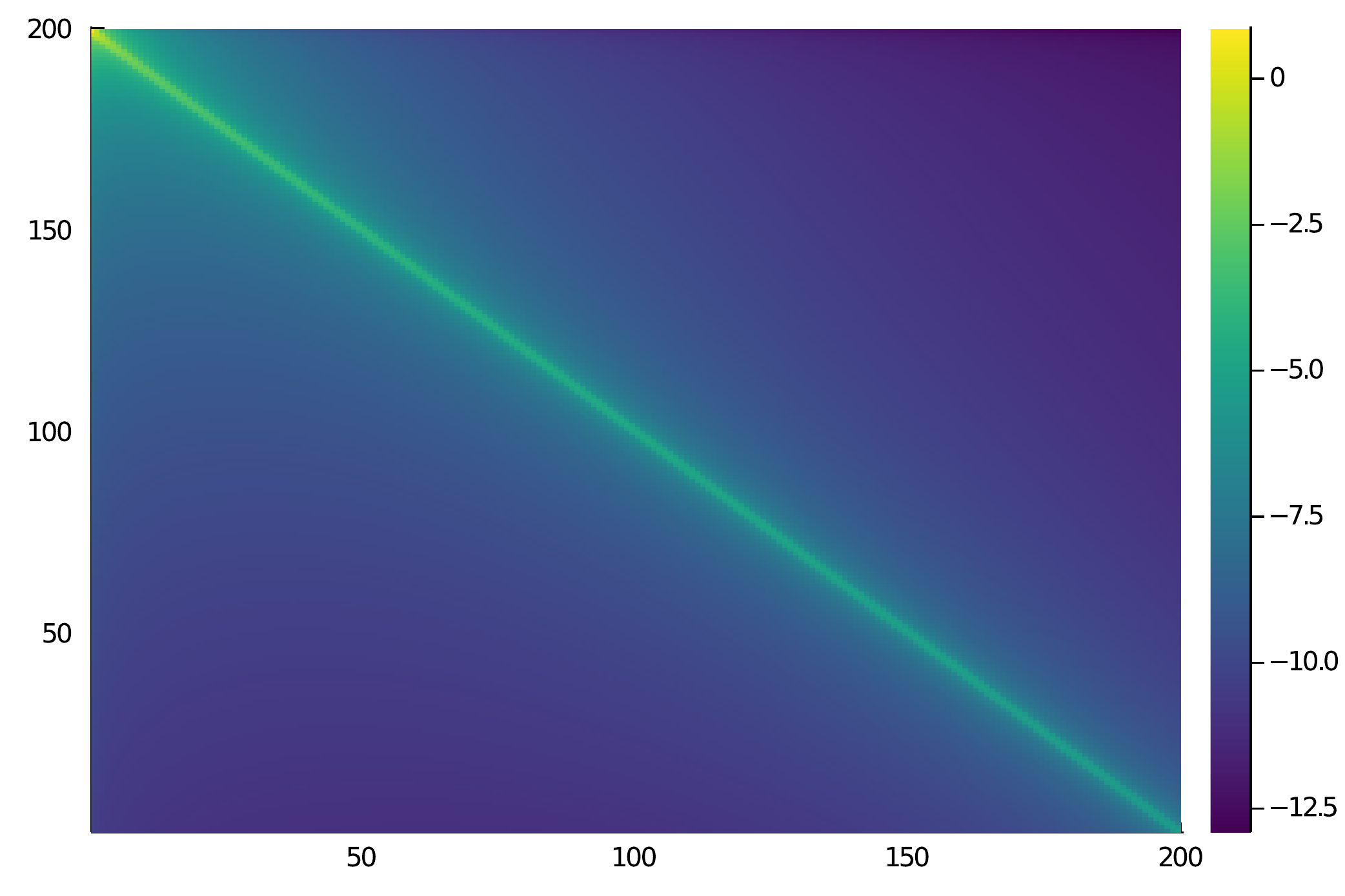} }}
    \caption{Spy plots for $\beta$-operators for various values of $\beta$ in different dimensions with the basis chosen such the $\alpha$-operator is banded. The legend is logarithmic and indicates the order of magnitude of the entries.}%
    \label{fig:spy_approxbanded}
    \end{figure}

\subsection{Application to equilibrium measure problems}
With the established operator structure and recurrences to efficiently generate them, the application of these results to higher dimensional equilibrium measure problems is mostly analogous to that of the one-dimensional ultraspherical method in \cite{gutleb_computing_2020}. As such, we will slightly abbreviate this discussion by omitting the question of solving equilibrium measure problems with only one power and an external potential, which works analogously to what is discussed in \cite[section 3.1]{gutleb_computing_2020}, and instead focus on the significantly more challenging problem of attractive-repulsive systems with vanishing external potential.\\

We remind ourselves of the problem we intend to solve: Find the positive density $\rho(\bar{y})$ which minimizes the scalar energy
\begin{equation*}
\frac{1}{\alpha}\int_{B_R} |\bar{x}-\bar{y}|^\alpha \rho(\bar{y}) d\bar{y} - \frac{1}{\beta}\int_{B_R} |\bar{x}-\bar{y}|^\beta \rho(\bar{y}) d\bar{y} = E,
\end{equation*}
Note both $\rho$ and the radius $R$ of its support $B_R$ are unknown. To be able to use our Jacobi polynomial results we thus first normalize this expression to the unit ball: 
\begin{equation*}
\frac{R^{\alpha+d}}{\alpha}\int_{B_1} |x-y|^\alpha \rho(Ry) dy - \frac{R^{\beta+d}}{\beta}\int_{B_1} |x-y|^\beta \rho(Ry) dy = E.
\end{equation*}
where $\bar{x} = Rx$ and $\bar{y} = Ry$ and $|y|,|x| \leq 1$. With its support normalized to the unit ball $B_1$, the density $\rho(Ry)$ is now assumed to be given in its expansion in weighted Jacobi polynomials
\begin{align*}
\rho	(Ry) = \sum_{n=0}^\infty \rho_n (1-|y|^2)^{\ell-\frac{\alpha+d}{2}} P_n^{(\ell-\frac{\alpha+d}{2},\frac{d-2}{2})}(2|y|^2-1).
\end{align*}
Plugging this into the governing equation we find
\begin{align*}
E &= \frac{R^{\alpha+d}}{\alpha}\sum_{n=0}^\infty \rho_n\int_{B_1} |x-y|^\alpha (1-|y|^2)^{\ell-\frac{\alpha+d}{2}} P_n^{(\ell-\frac{\alpha+d}{2},\frac{d-2}{2})}(2|y|^2-1) dy \\&- \frac{R^{\beta+d}}{\beta}\sum_{n=0}^\infty \rho_n\int_{B_1} |x-y|^\beta (1-|y|^2)^{\ell-\frac{\alpha+d}{2}} P_n^{(\ell-\frac{\alpha+d}{2},\frac{d-2}{2})}(2|y|^2-1) dy.
\end{align*}
Choosing instead to represent this as the action of operators on coefficient vectors as described in section \ref{sec:jacobi}, we obtain the linear system
\begin{align*}
\left(\frac{R^{\alpha+d}}{\alpha} U^{\alpha}_\alpha - \frac{R^{\beta+d}}{\beta} U^{\beta}_\alpha\right) \vec{\rho} = E \vec{e}_1,
\end{align*}
where the upper index in $U^{a}_b$ denotes the power of the kernel and the bottom index the power of the operator which is made banded through the choice of basis. We have seen in the previous section that $U^{\alpha}_\alpha$ is banded and $U^{\beta}_\alpha$ is approximately banded. This system can be uniquely solved for the coefficient vector $\vec{\rho}$ \emph{if} $R$ is known. Since $R$ is the radius of support of the equilibrium measure, it is in general not known. However, this approach nevertheless allows us to compute a unique corresponding measure for any $R$, meaning that we have reduced a complicated minimization problem over the space of positie measures to a minimization in the single scalar value $R$.\\
There is one further complication to address: obtaining the unique $\vec{\rho}$ corresponding to a given $R$ appears to require knowledge of $E(\rho)$, which is certainly not known. This hurdle is overcome by the mass condition. Rearranging the above to
\begin{align*}
\left(\frac{R^{\alpha+d}}{\alpha} U^{\alpha}_\alpha - \frac{R^{\beta+d}}{\beta} U^{\beta}_\alpha\right) \frac{\vec{\rho}}{E} = \vec{e}_1,
\end{align*}
allows us to solve for a coefficient vector which corresponds to $\frac{\rho(Ry)}{E}$. We can get rid of the constant $E$ by normalizing our measure according to the given mass constraint. This can be done with very high efficiency using the approach described in section \ref{sec:themasscondition}, ultimately allowing us to compute both a unique $\rho(y)$ and its corresponding energy $E$ for a given radius $R$.
\begin{remark}
$U^{\alpha}_\alpha$ and $U^{\beta}_\alpha$ are independent of $R$. Thus, when minimizing the energy by varying $R$, the operator does not need to be re-computed for each $R$. Instead, we store $U^{\alpha}_\alpha$ and $U^{\beta}_\alpha$ after the first generation and then simply compute $\left(\frac{R^{\alpha+d}}{\alpha} U^{\alpha}_\alpha - \frac{R^{\beta+d}}{\beta} U^{\beta}_\alpha\right)$.\end{remark}
\begin{remark} The dimension $d$ has no direct impact on the computational cost, meaning that due to the rotational symmetry this is a rare case where calculations on the $d$-hypersphere are just as efficient as those on the interval. This is in stark constrast to the widely used methods of discrete particle swarm simulations to approach these problems which scale catastrophically with $d$, making even two or three dimensional computations almost impossible in interesting parameter ranges, not to mention higher dimensional problems.
\end{remark}

\subsection{Notes on convergence and stability}
In this section we address the concern of numerical instability due to the fact that Fredholm integral equations of first kind posed on Banach spaces, such as the ones appearing in these equilibrium measure problems, are Hilbert-Schmidt and compact and thus cannot simply be inverted. In \cite{gutleb_computing_2020} we used a Tikhonov regularization \cite{tikhonov1963regularization,tikhonov1963solution} approach to overcome this problem and this also works for the $d$-dimensional generalization in this paper. We will only briefly sketch the idea behind Tikhonov regularization and leave the rest to the dedicated literature, see e.g. \cite{neggal_projected_2016,nair_linear_2009}.\\
The idea behind this approach is to instead solve a well-posed second kind Fredholm integral equation which is in a well-defined sense 'adjacent' to the actual problem we intend to find solutions for. Denoting the full operator we wish to invert simply by $\mathcal{F}$, the most straightforward Tikhonov regularization is to instead solve the problem
\begin{align*}
(s \mathcal{I} + \mathcal{F}^*\mathcal{F})\vec{\rho}_s = \mathcal{F}^*E\vec{e}_1,
\end{align*}
where $s$ is small. As mentioned in \cite{gutleb_computing_2020}, we note that the error of the $n$-th order $s$-regularized approximation $\rho_{s,n}$ incurred from such a modification can be estimated via
\begin{align*}
|\rho_{s,n} - \rho| \leq |\rho_{s,n}- \rho_{s}| + |\rho_{s} - \rho|,
\end{align*}
splitting into an error originated only from the Thikonov projection and an error caused by the spectral expansion of the regularized solution. We discuss the convergence properties of our implementation in section \ref{sec:numericalconvergence}, where we also showcase the substantially improved stability gained from this regularization.
\section{Numerical validation, convergence and experiments}\label{sec:numexp}

\subsection{Uniqueness of the obtained equilibrium measures}
Before comparing our method with analytic results and other numerical approaches, we explore whether our method produces a unique result for each set of parameters. It is currently not analytically known whether power law equilibrium measures are unique for general parameter ranges, so this investigation also provides numerical evidence that this is indeed the case.\\
The most straightforward way to explore this question is to plot the energy of the measures obtained as a function of the radius input, which we do in Figure \ref{fig:uniquesolutions} for two generic combinations of dimensions and parameters. Figure \ref{fig:uniquesolutions} also shows the corresponding computed equilibrium measures. The minimal energy positive measure is unique in each tested case, including all of the other problems considered in the later sections. The consistently observed lower energy states for higher radius values than the local minimum are not positive measures and thus not admissible for the problem. Similar structures were observed in \cite{gutleb_computing_2020} for the one-dimensional ultraspherical method. In practice the problem of finding the minimal positive measure is nice enough for a simple constrained optimization to succeed when started on a lower radius than the solution. Automatically probing the energy on a grid before setting up the constraints of the optimization problem can lead to significant performance improvements.
\begin{figure}
\centering
     \subfloat[$(\alpha, \beta, d)  = (1.2, 0.1993, 2)$]
    {{ \centering \includegraphics[width=4.1cm]{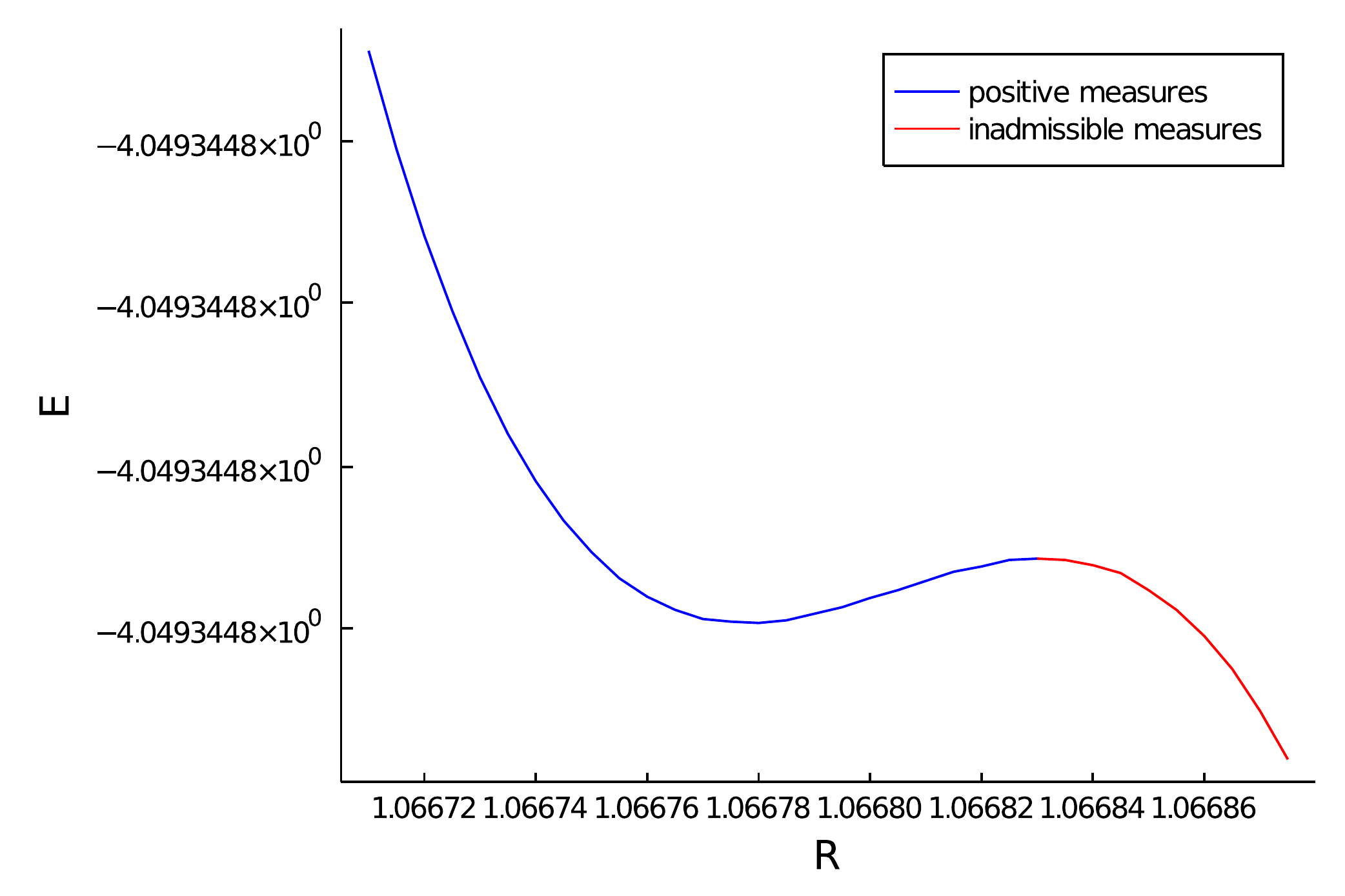} }}
         \subfloat[measure for (a)]
    {{ \centering \includegraphics[width=4.1cm]{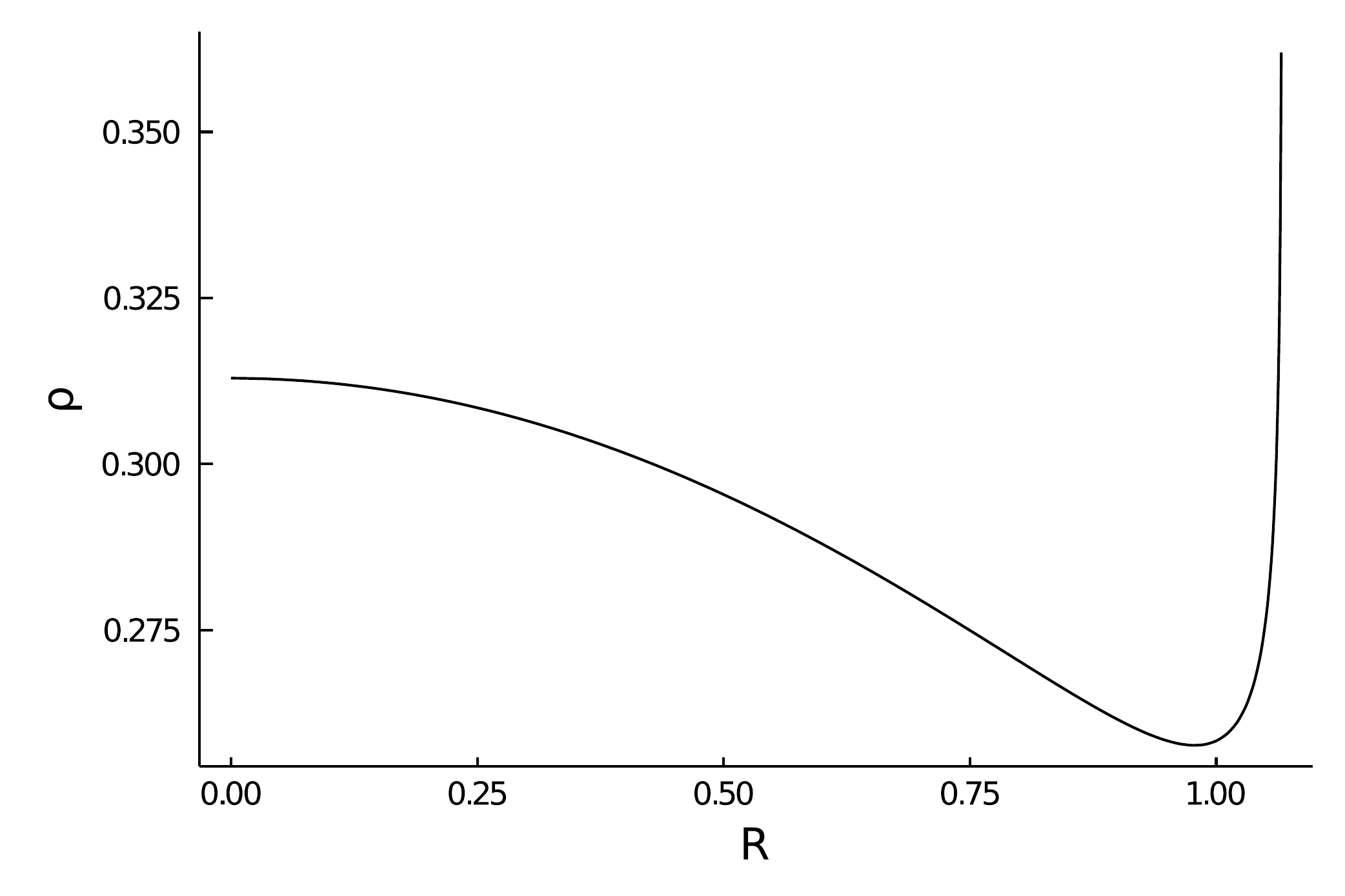} }}
             \subfloat[measure for (a) on disk]
    {{ \centering \includegraphics[width=4.1cm]{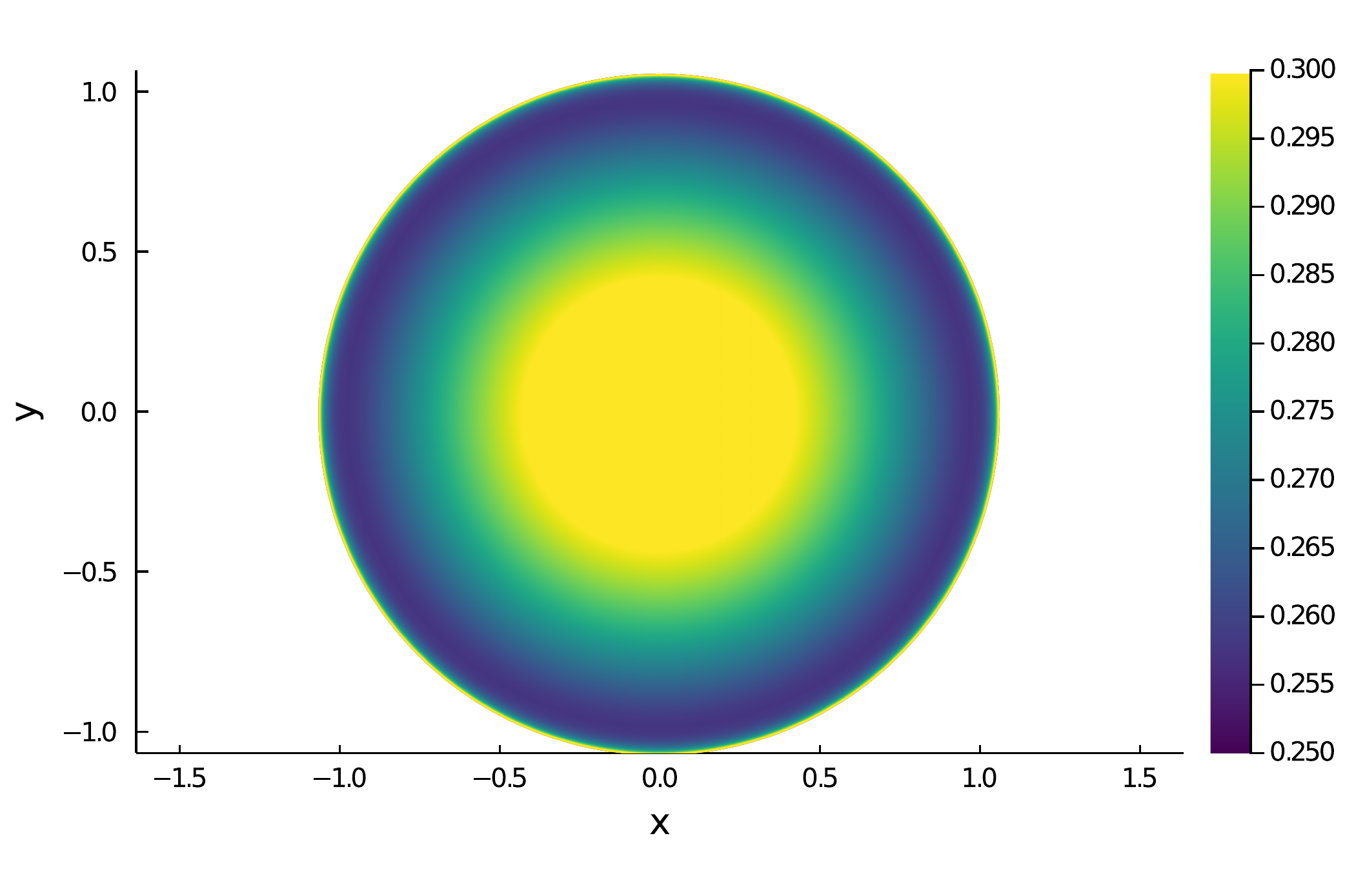} }}\\
         \subfloat[$(\alpha, \beta, d)  = (1.77,0.05,3)$]
    {{ \centering \includegraphics[width=4.1cm]{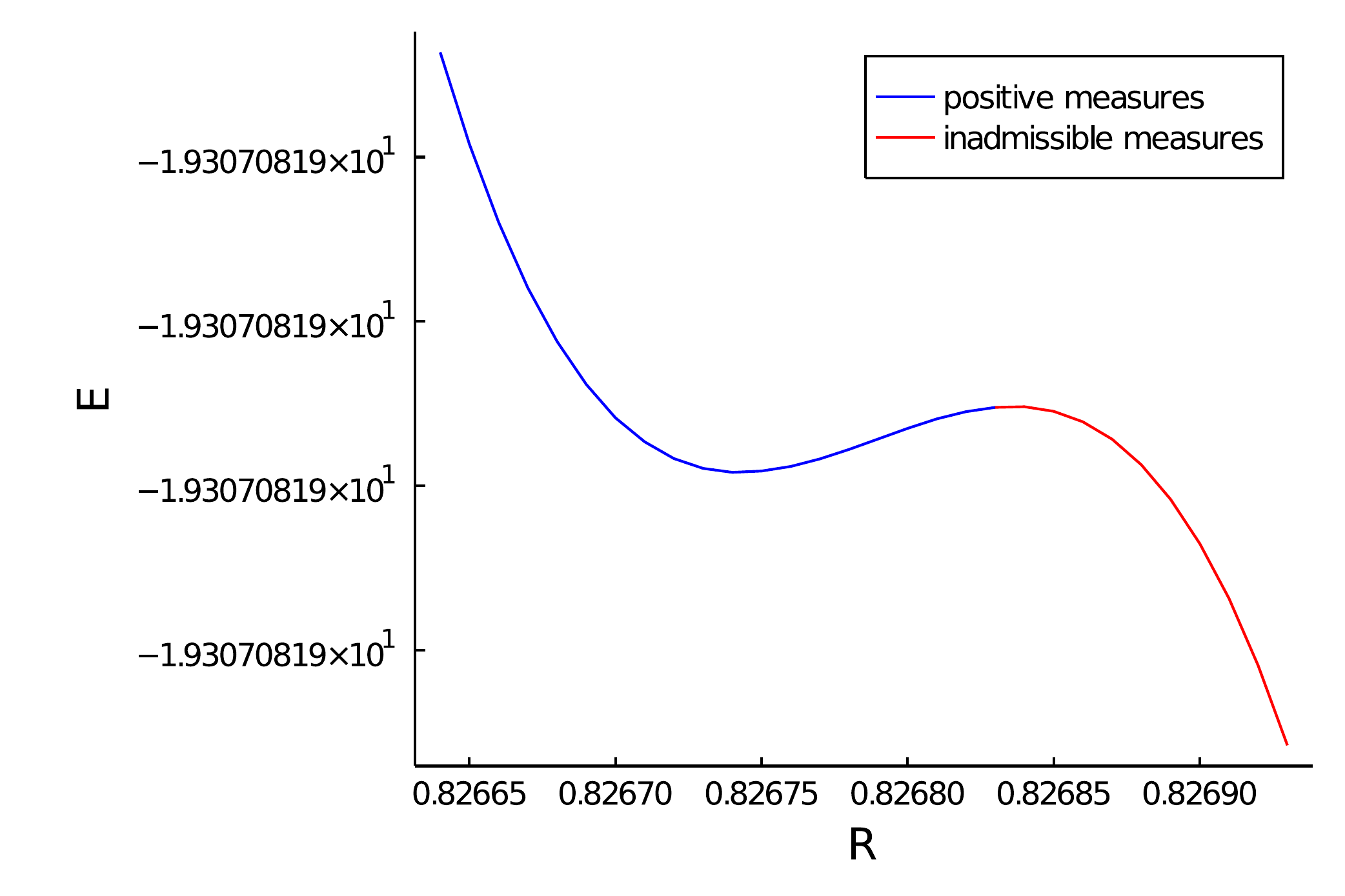} }}
         \subfloat[measure for (d)]
    {{ \centering \includegraphics[width=4.1cm]{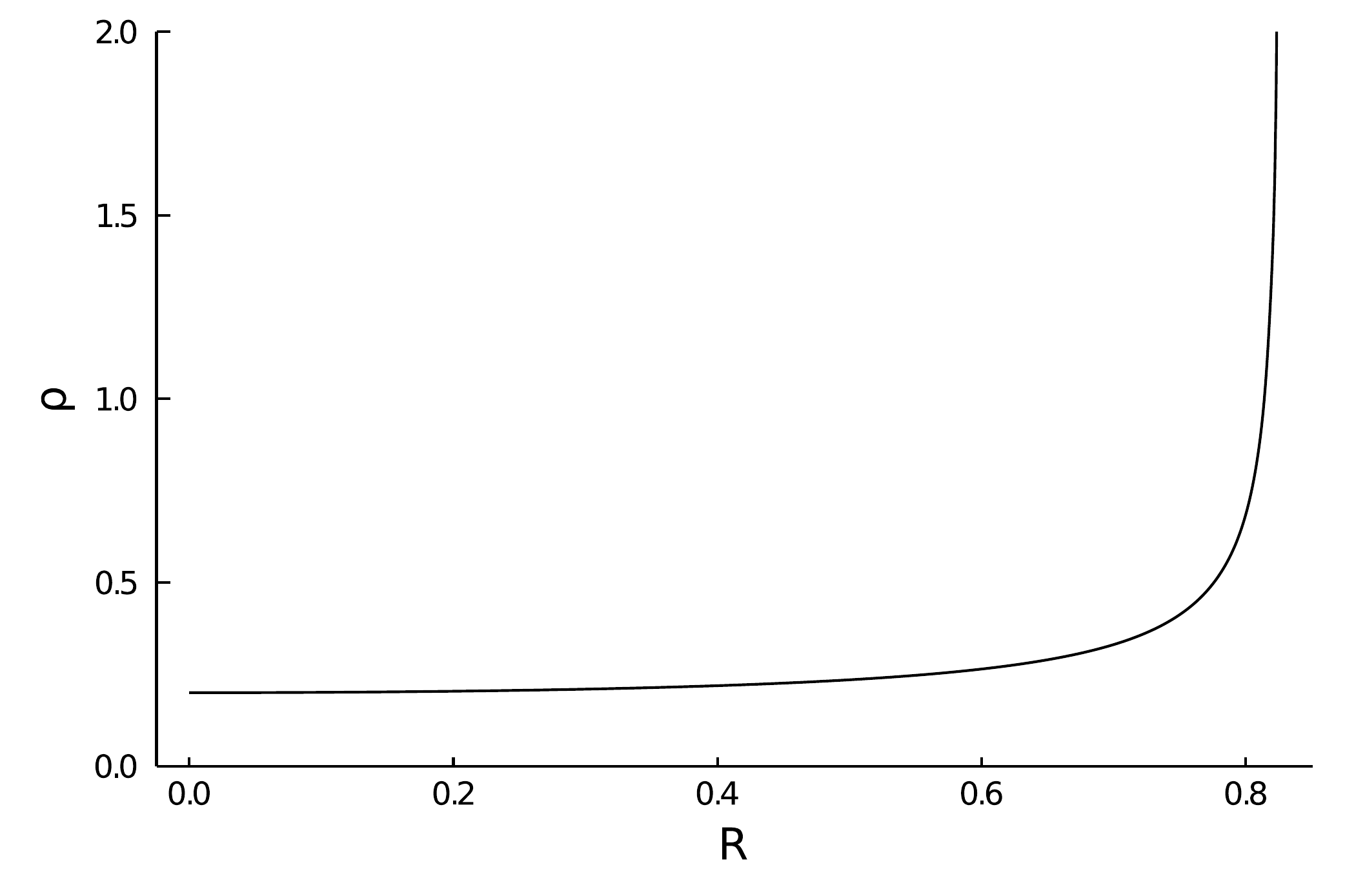} }}
    \caption{Left-most figures (a) and (d) show close-up of local energy minima for the stated example problem parameters. Second column figures (b) and (e) show the corresponding radially symmetric measures plotted as a function of $R=|y|$. Since (a) is a two-dimensional problem we also include a plot of the obtained measure on the full disk domain in (c).}%
    \label{fig:uniquesolutions}%
\end{figure}
\subsection{Comparison with analytic solutions in special cases}
In \cite{carrillo_explicit_2017}, the authors described a number of special case solutions in arbitrary dimension to the power law equilibrium measure problem where one of the powers is an even integer. In this section we use these solutions to verify the accuracy of our method and to provide a practical demonstration of its convergence.\\
We first consider the power law equilibrium measure problem defined by $\alpha = 2$. The solution to this problem was found in \cite{carrillo_explicit_2017} to be given by
\begin{align}
R =\left(\tfrac{\pi \Gamma \left(2-\frac{\beta }{2}\right) }{\sin \left(\frac{\pi  (\beta +d)}{2}\right)\Gamma \left(\frac{d}{2}+1\right) \Gamma \left(-\frac{d}{2}-\frac{\beta }{2}+1\right)}\right)^{\frac{1}{2-\beta }},\label{eq:a2radius}\\
\rho	(x) = -\tfrac{Md\Gamma\left(\frac{d}{2}\right) \sin \left(\frac{\pi  (\beta +d)}{2}\right)}{(\beta+d-2)\pi^{\frac{d+2}{2}}} (R^2-|x|^2)^{1-\frac{\beta+d}{2}}.\label{eq:a2measure}
\end{align}
In Figure \ref{fig:radiusa2} we plot the numerically computed energy with varying $R$, showing that the analytic solutions are exactly the local minima obtained via our method. Additionally, in Figure \ref{fig:analyticdiskcompare} we plot the computed equilibrium measure for a two dimensional example on the disk and include an absolute error heatmap.\\
\begin{figure}
     \subfloat[$(\alpha, \beta, d)  = (2, -0.44, 2)$]
    {{ \centering \includegraphics[width=4.1cm]{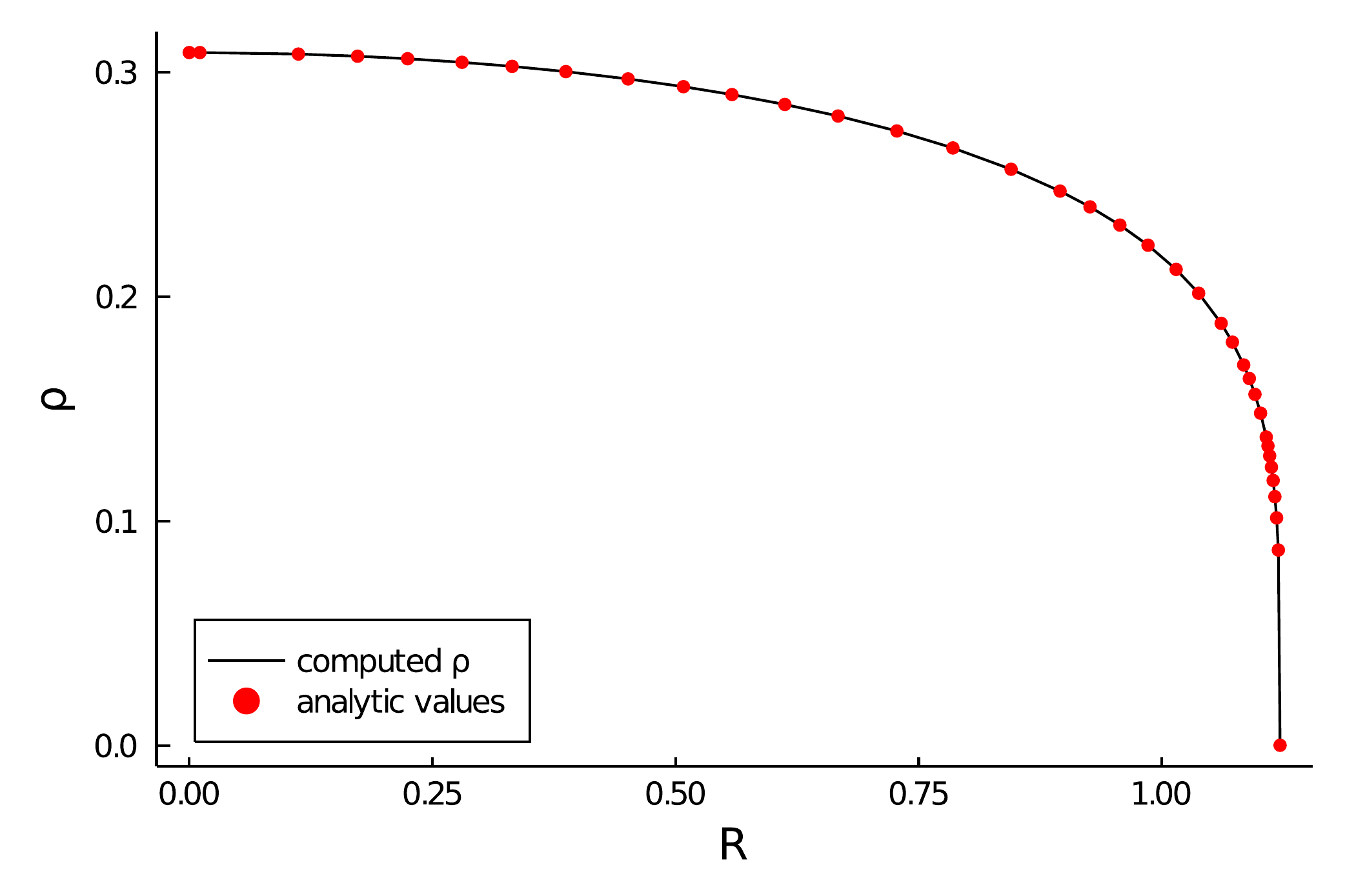} }}
         \subfloat[measure for (a) on disk]
    {{ \centering \includegraphics[width=4.1cm]{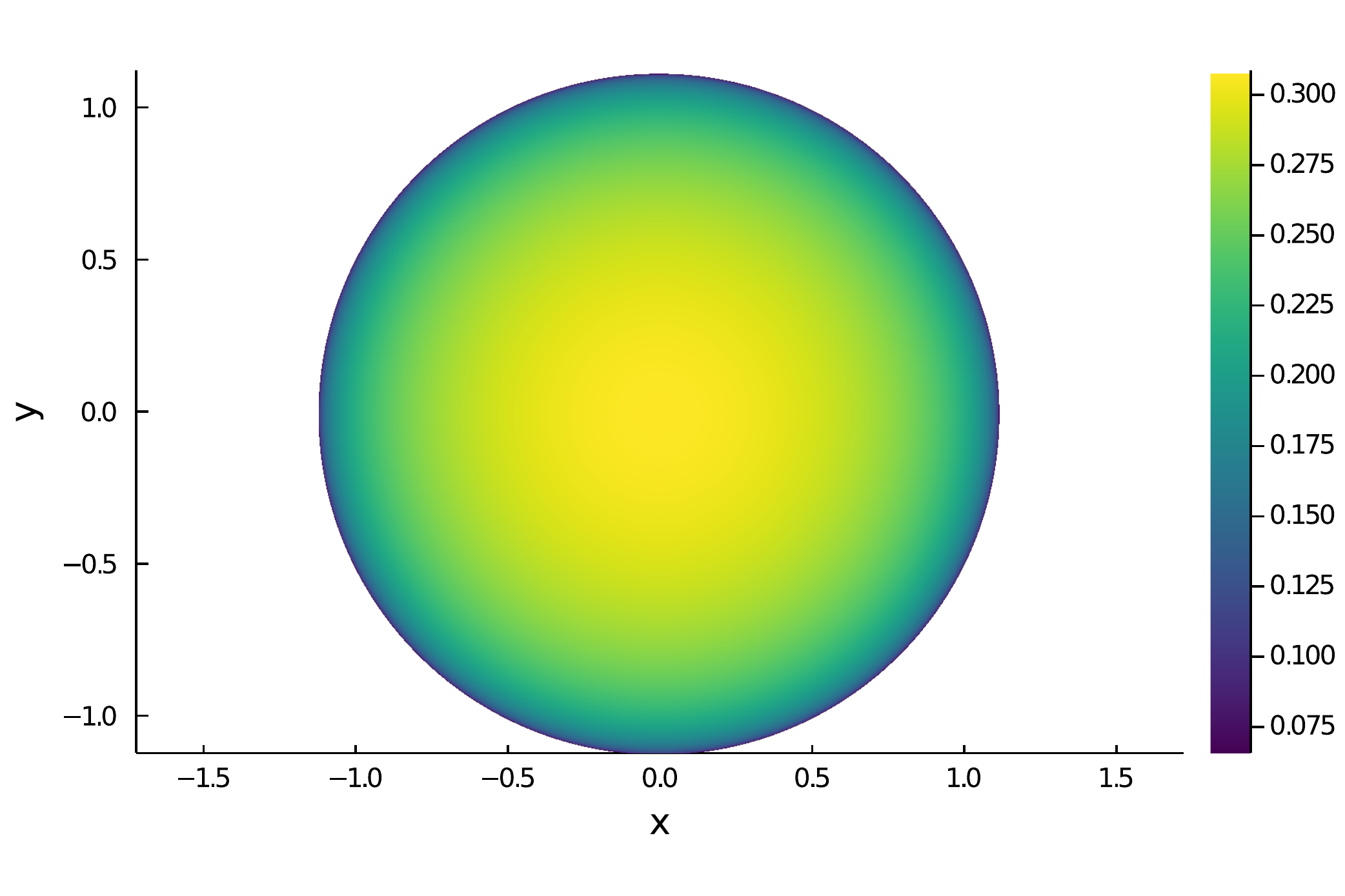} }}
         \subfloat[absolute errors for (a) on disk]
    {{ \centering \includegraphics[width=4.1cm]{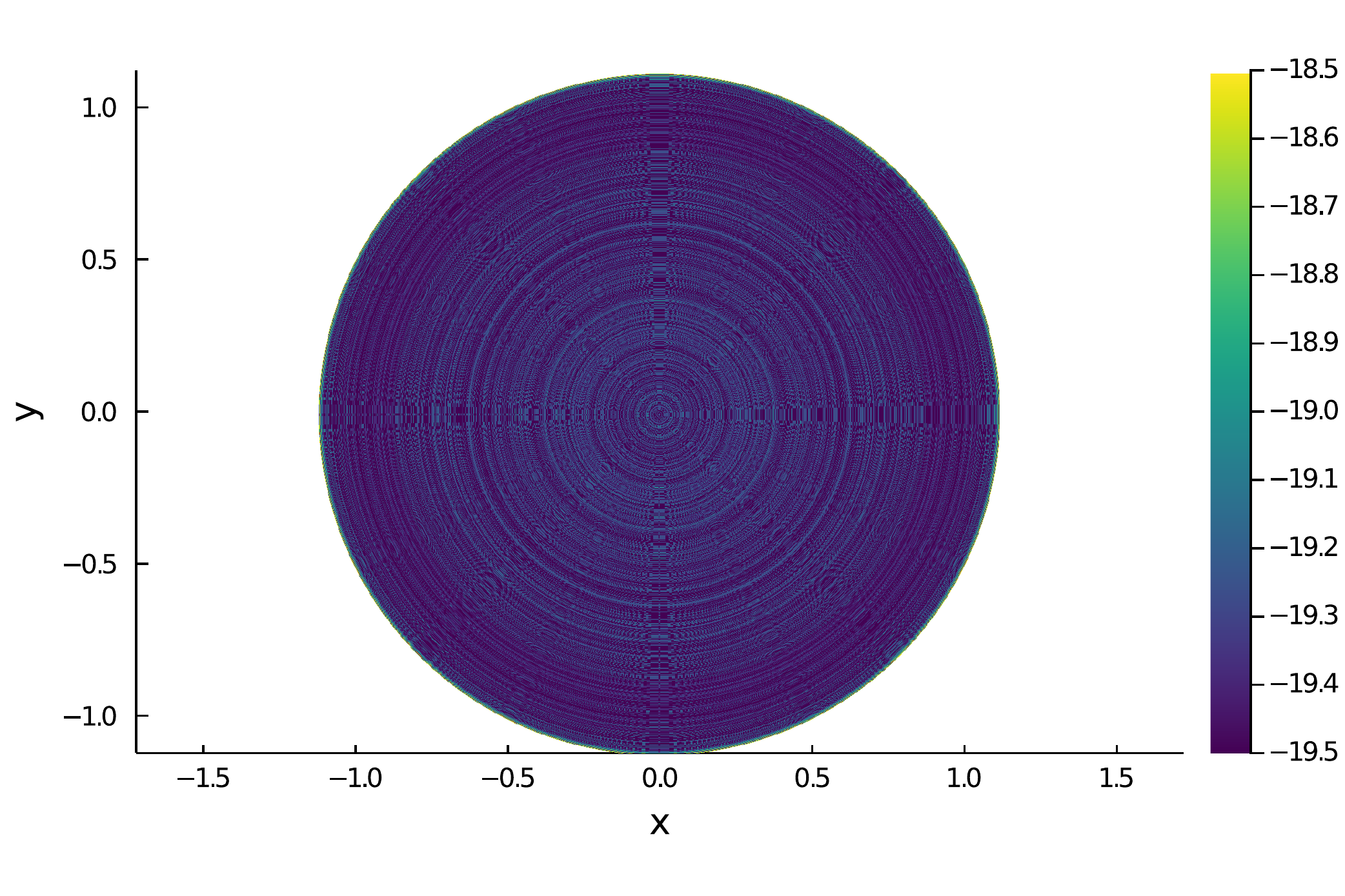} }}
    \caption{(a) shows the equilibrium measure for the stated example problem parameters as a radial plot including a comparison to the analytic solution, (b) shows the same computed equilibrium measure on its full disk domain and (c) shows a heatmap of the absolute errors on the disk domain, where the legend is logarithmic indicating order of magnitude.}%
    \label{fig:analyticdiskcompare}%
\end{figure}
\begin{figure}
     \subfloat[$(\alpha, \beta, d, M)  = (2, 1.2, 1, 1)$]
    {{ \centering \includegraphics[width=4.1cm]{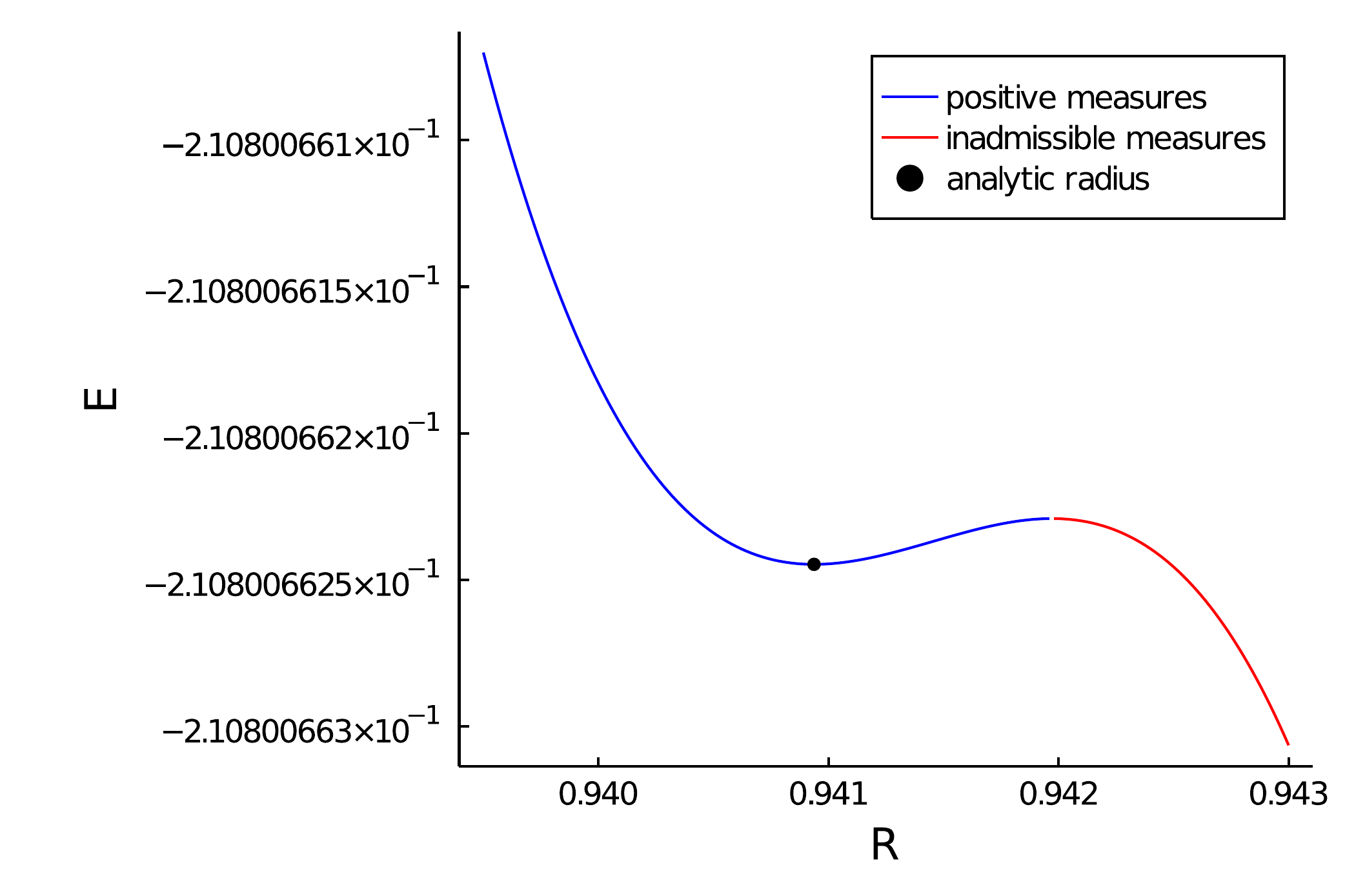} }}
         \subfloat[$(\alpha, \beta, d, M)  = (2, \frac{1}{3}, 2, 2.6)$]
    {{ \centering \includegraphics[width=4.1cm]{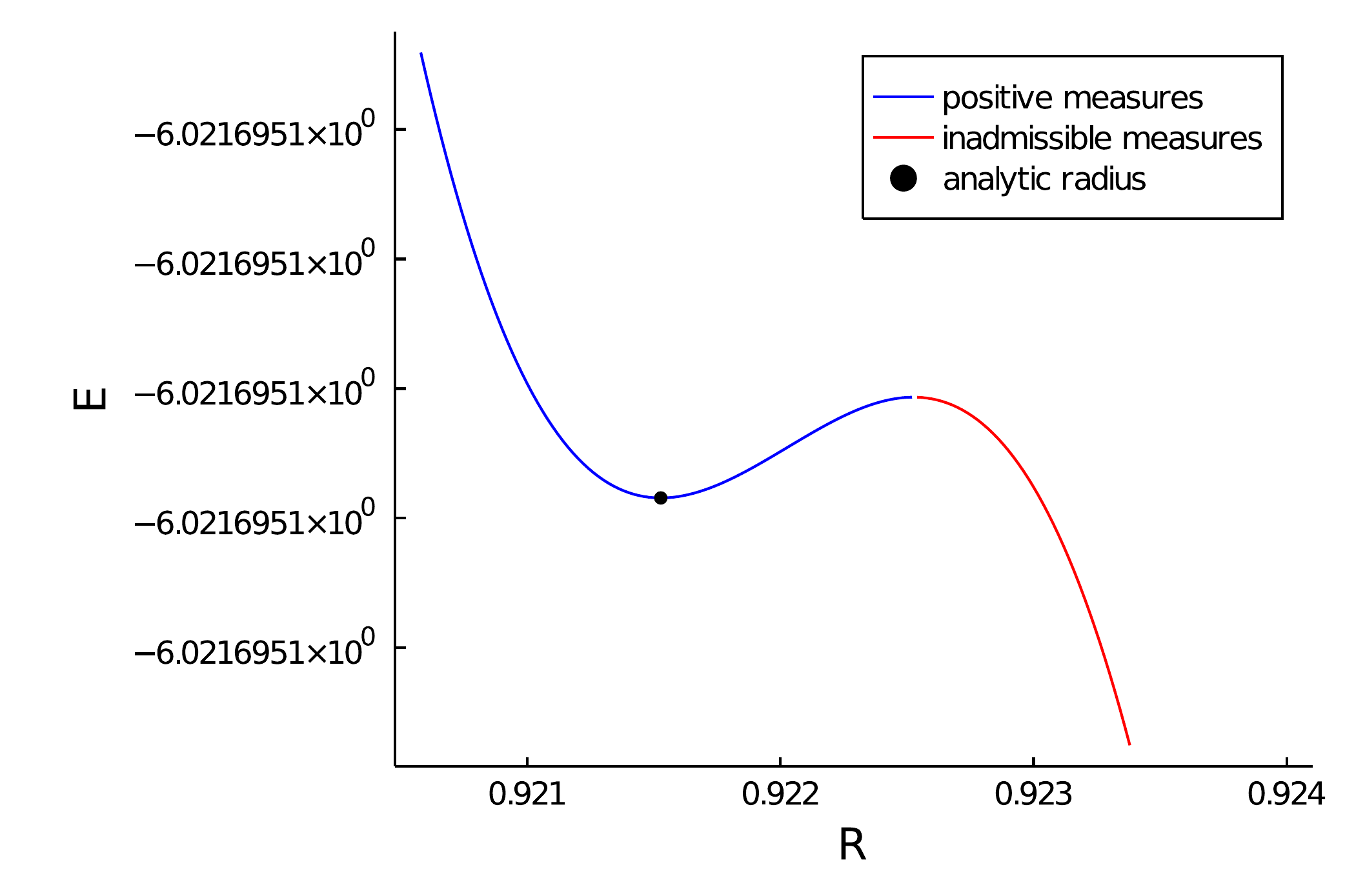} }}
             \subfloat[$(\alpha, \beta, d, M)  = (2, -0.5, 3, 1)$]
    {{ \centering \includegraphics[width=4.1cm]{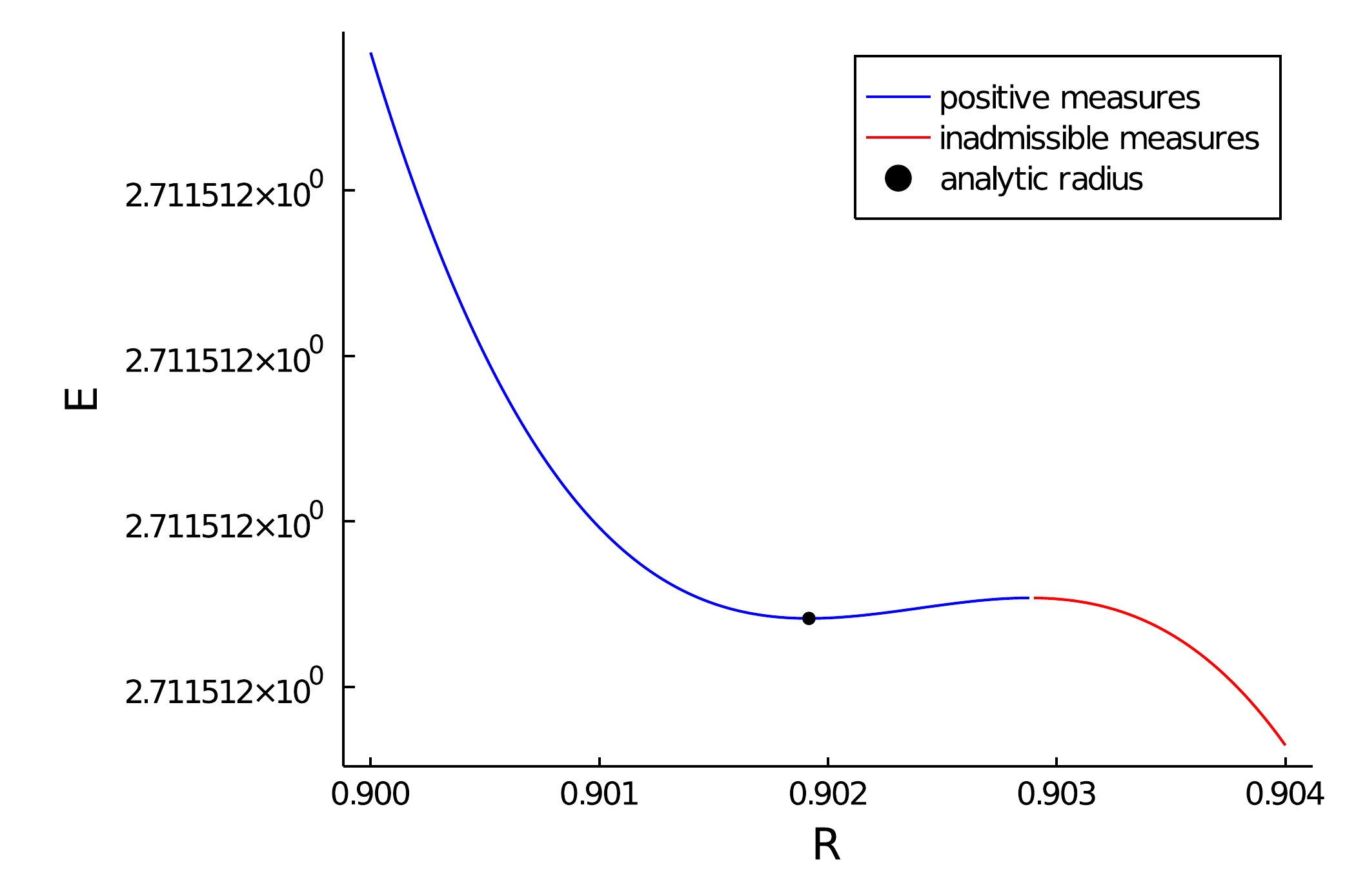} }}\\
         \subfloat[$(\alpha, \beta, d, M)  = (2, -2.5, 4, 0.5)$]
    {{ \centering \includegraphics[width=4.1cm]{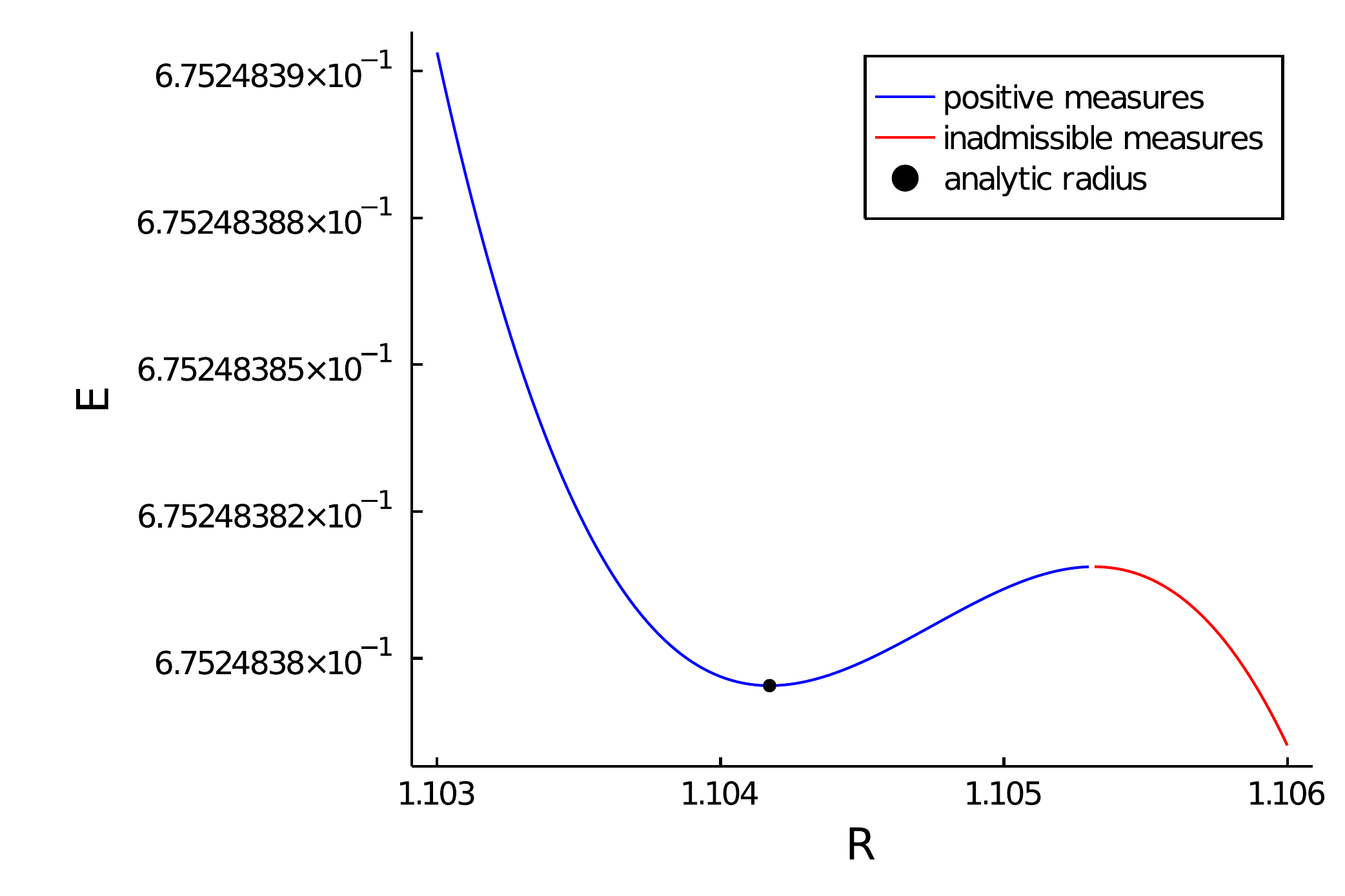} }}
         \subfloat[$(\alpha, \beta, d, M)  = (2, -\frac{4\pi}{5}, 5, 1)$]
    {{ \centering \includegraphics[width=4.1cm]{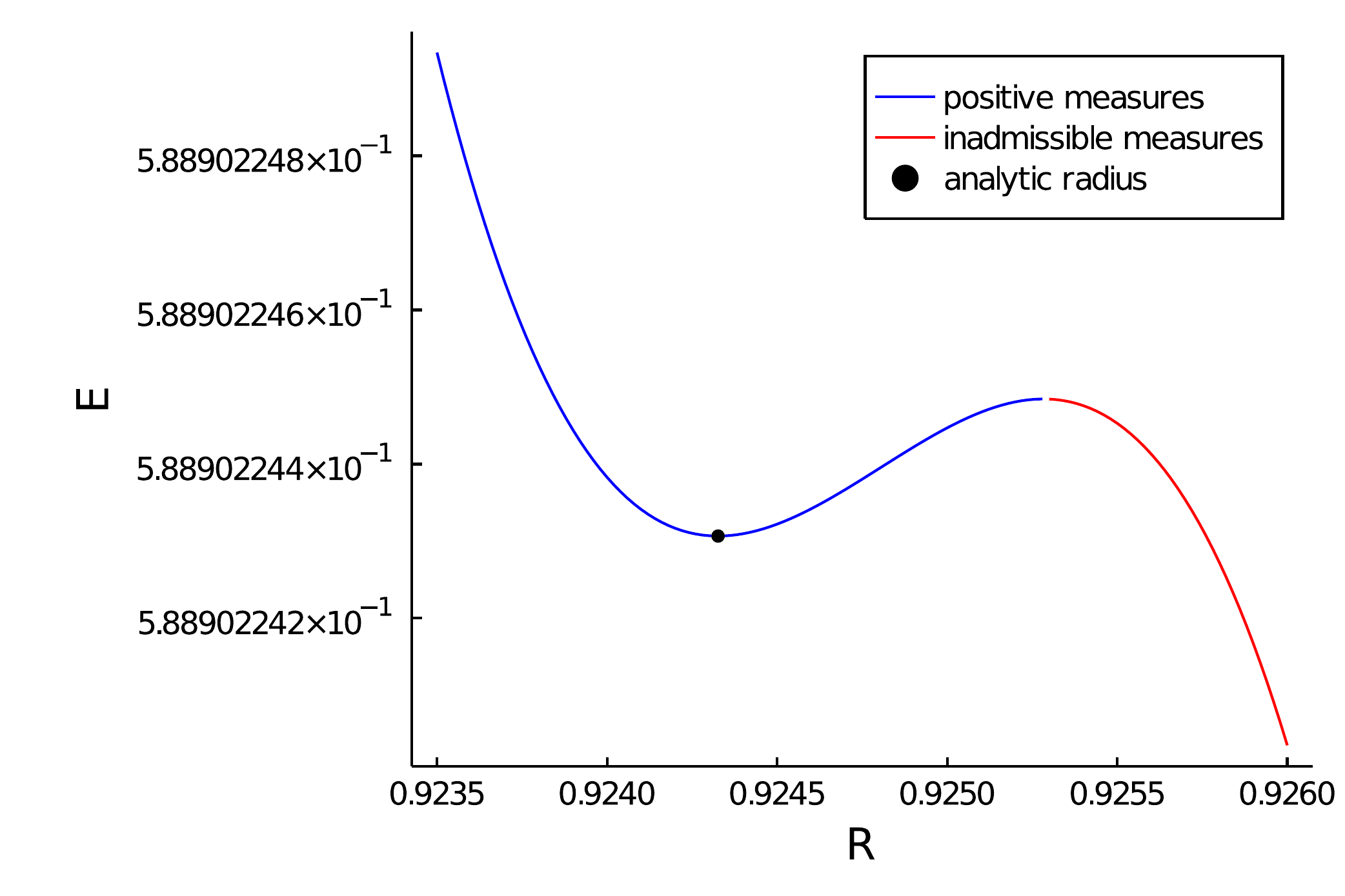} }}
             \subfloat[$(\alpha, \beta, d, M)  = (2, -3.2, 6, 1)$]
    {{ \centering \includegraphics[width=4.1cm]{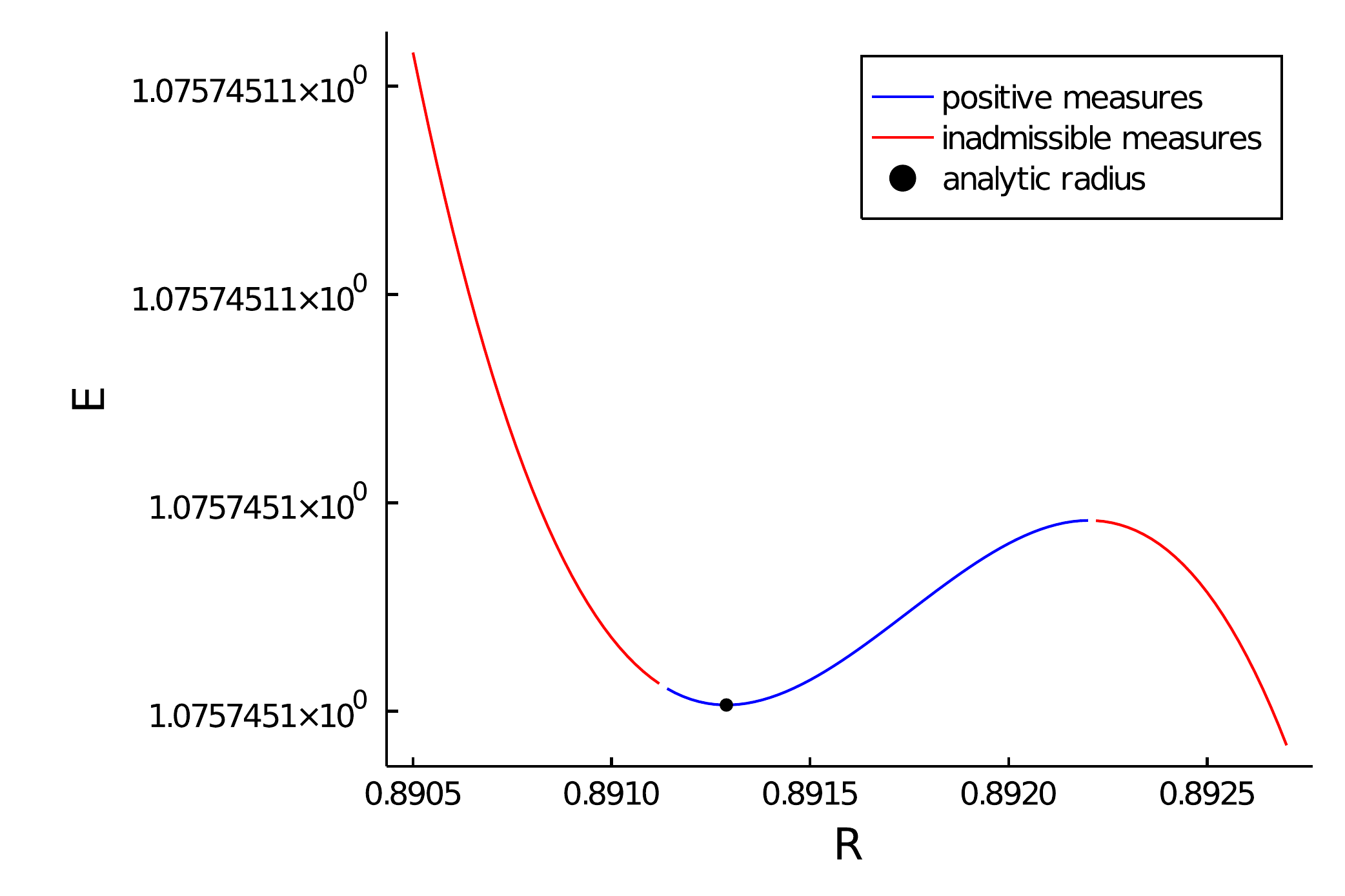} }}
    \caption{Close-up of local energy minima for the stated example problem parameters for $\alpha=2$ in dimensions $1$ to $6$, showing the radius of the analytic solution in Eq. \eqref{eq:a2radius} is consistent with the local minima obtained by our method.}%
    \label{fig:radiusa2}%
\end{figure}
Now for $\alpha = 4$, the analytic solution as found in \cite{carrillo_explicit_2017} may be written as
\begin{align}
R = \left[ \frac{d(d+2)\Gamma(\frac{d}{2})}{2\Gamma(\frac{\beta+d}{2})\Gamma(2-\frac{\beta}{2}))} \left(\frac{1}{4-\beta}+\frac{1}{\sqrt{(2-\beta)(6-\beta)}} \right) \right]^{-\frac{1}{4-\beta}},\label{eq:a4radius}\\
\rho(x) = (R^2-|x|^2)^{1-\frac{\beta+d}{2}}\left( A_1 R^2 + A_2 (R^2-|x|^2)\right)\label{eq:a4measure},
\end{align}
with the constants
\begin{align*}
A_1 &= \frac{\Gamma(\frac{d}{2})}{\pi^{\frac{d}{2}}} \frac{d(d+2)M}{2 B\left(\frac{\beta+d}{2},2-\frac{\beta+d}{2}\right)}\left[\frac{1}{\sqrt{(2-\beta)(6-\beta)}}+\frac{1}{2-\beta} \ \right],\\
A_2 &=  \frac{\Gamma(\frac{d}{2})}{\pi^{\frac{d}{2}}} \frac{d(d+2)M}{B\left(\frac{\beta+d}{2},3-\frac{\beta+d}{2} \right) 4 (\beta-2) M}.
\end{align*}
As before, we plot the numerically computed energy with varying $R$ along with the analytic radius in Figure \ref{fig:radiusa4}.\\
\begin{figure}
     \subfloat[$(\alpha, \beta, d, M)  = (4, 0.5, 2, 1)$]
    {{ \centering \includegraphics[width=4.1cm]{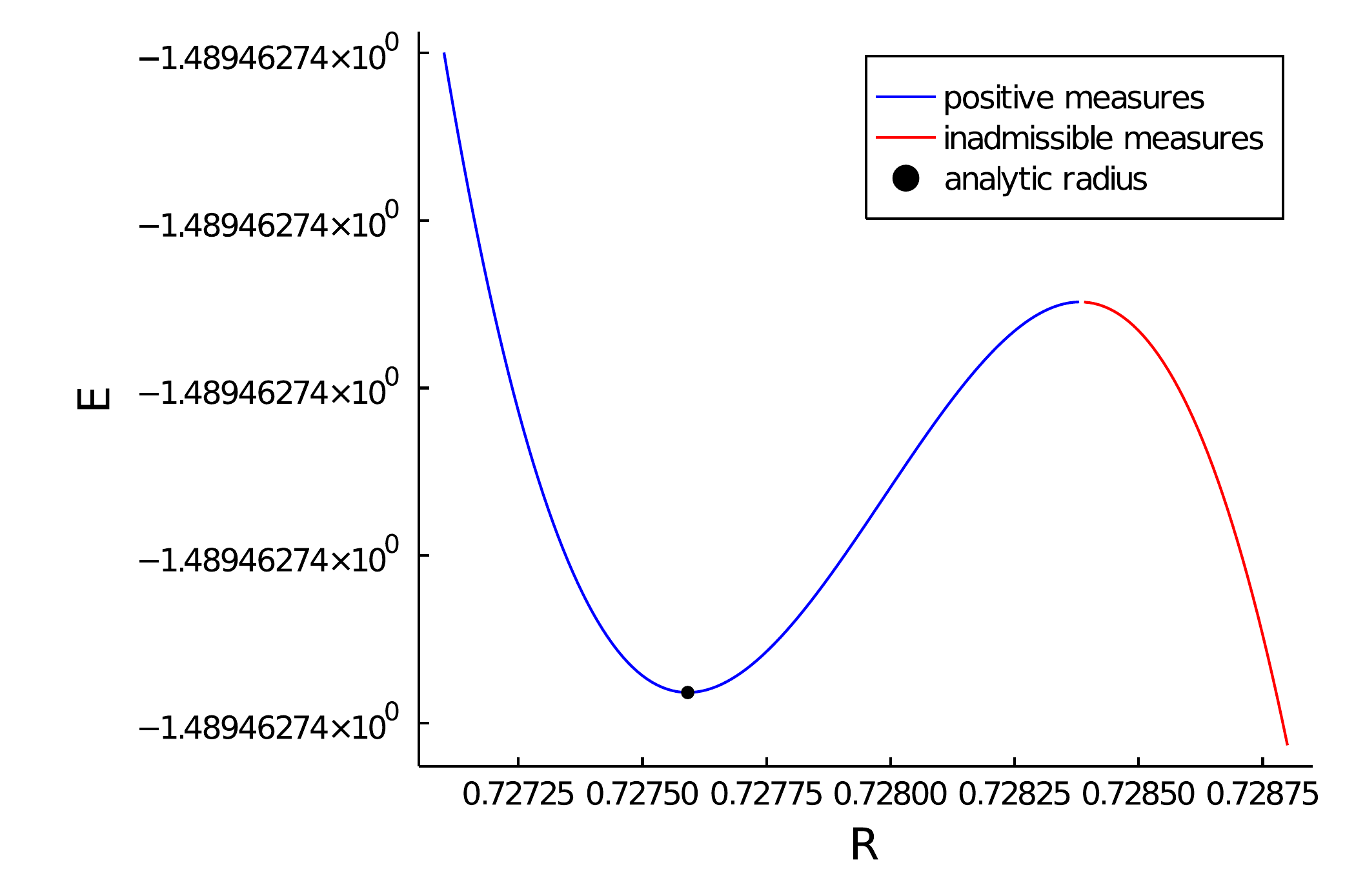} }}
         \subfloat[$(\alpha, \beta, d, M)  = (4, -1.1, 3, 1)$]
    {{ \centering \includegraphics[width=4.1cm]{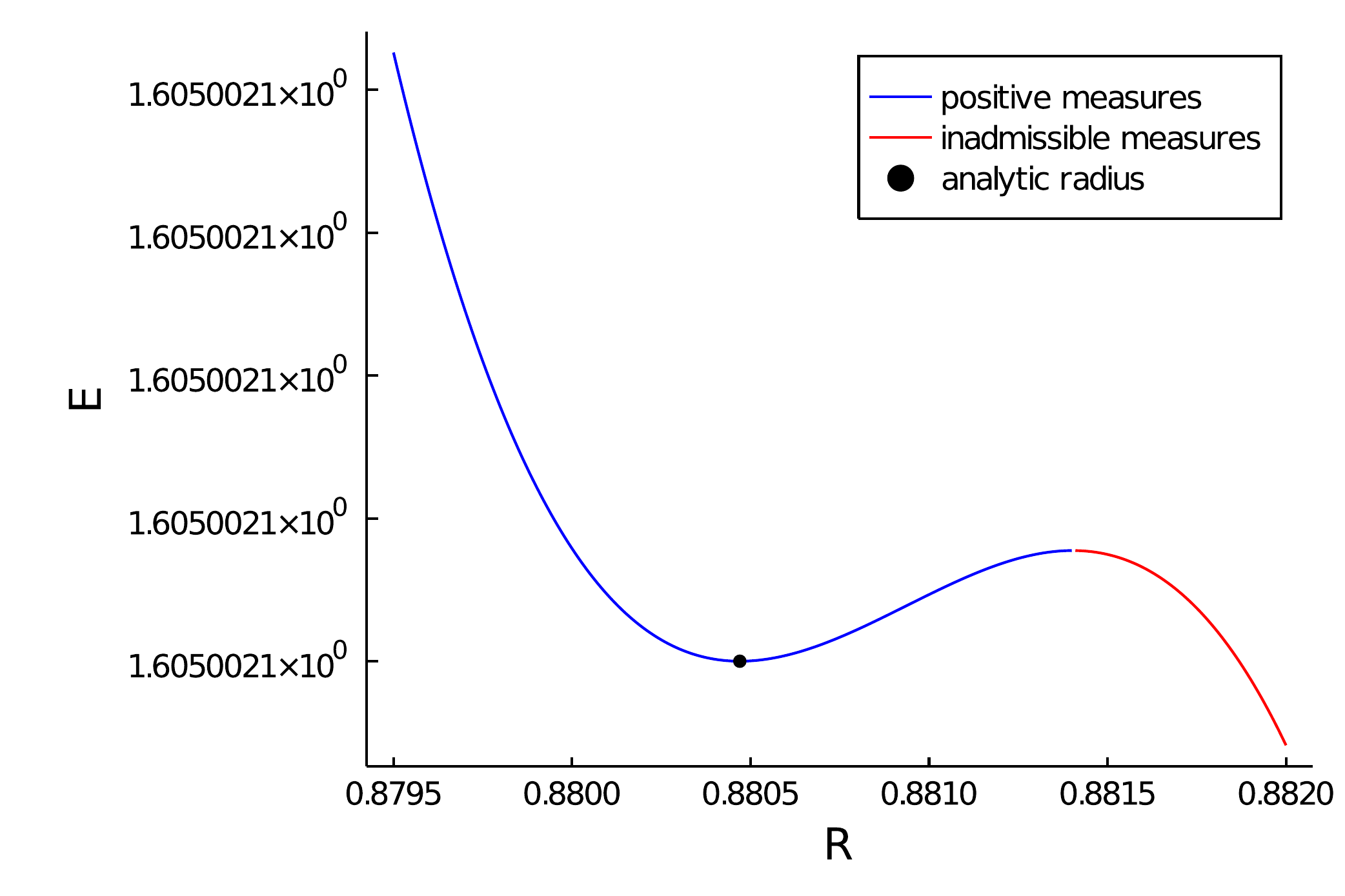} }}
             \subfloat[$(\alpha, \beta, d, M)  = (4, -3.9, 6, 2)$]
    {{ \centering \includegraphics[width=4.1cm]{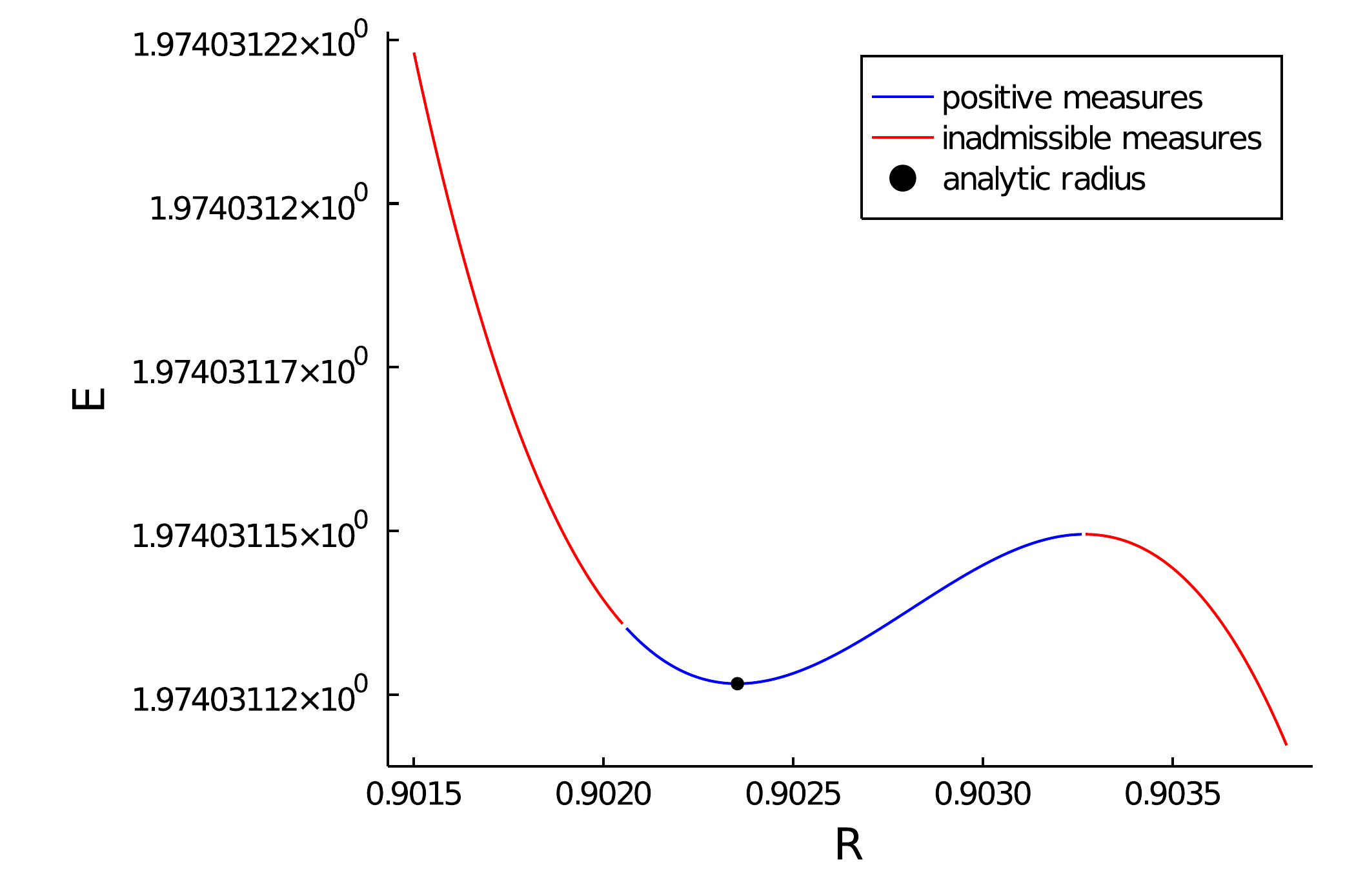} }}
    \caption{Close-up of local energy minima for the stated example problem parameters for $\alpha=4$ in different dimensions, showing the radius of the analytic solution in Eq. \eqref{eq:a4radius} is consistent with the local minima obtained by our method.}%
    \label{fig:radiusa4}%
\end{figure}
The reason these special cases have known analytic solutions is that they are particularly well-behaved -- in fact, using our method both of these solutions may be computed to high precision using arbitrary precision floating point calculations with only one or two polynomial orders of approximation, meaning that these solutions can be computed almost instantaneously. Note how the energy as a function of $R$ in Figure \ref{fig:radiusa2} and \ref{fig:radiusa4} show clear local minima at the analytic radius, further highlighting how these are special well-behaved cases compared to the general case. A glance at the form of the solutions tells us most of the work is already done by making the correct choice of weighted basis. To get a meaningful visualization of the convergence rate of our method, we can thus not rely on examples with presently known analytic solutions as the plot would simply show an almost instant drop to any precision of our choosing. We show an example of a convergence plot in Figure \ref{fig:analyticconvergence} for completeness for an example where $\alpha=4$. The problem of visualizing convergence meaningfully is thus left to the next section.\\
The fact that an analytic radius is known for these special cases does however give us a tool to visualize the error incurred by the precision chosen for the optimization. In Figure \ref{fig:radiusdelta} we thus plot the maximum absolute error incurred for the obtained measure when deviating from the true radius solutions. As was observed in \cite{gutleb_computing_2020} for the one-dimensional case, errors in the computation of the radius propagate linearly to the error in the measure. This can be used as a guiding principle for the convergence conditions of the optimization method to obtain a result with a particular desired accuracy.

\begin{figure}
     \centering \includegraphics[width=8cm]{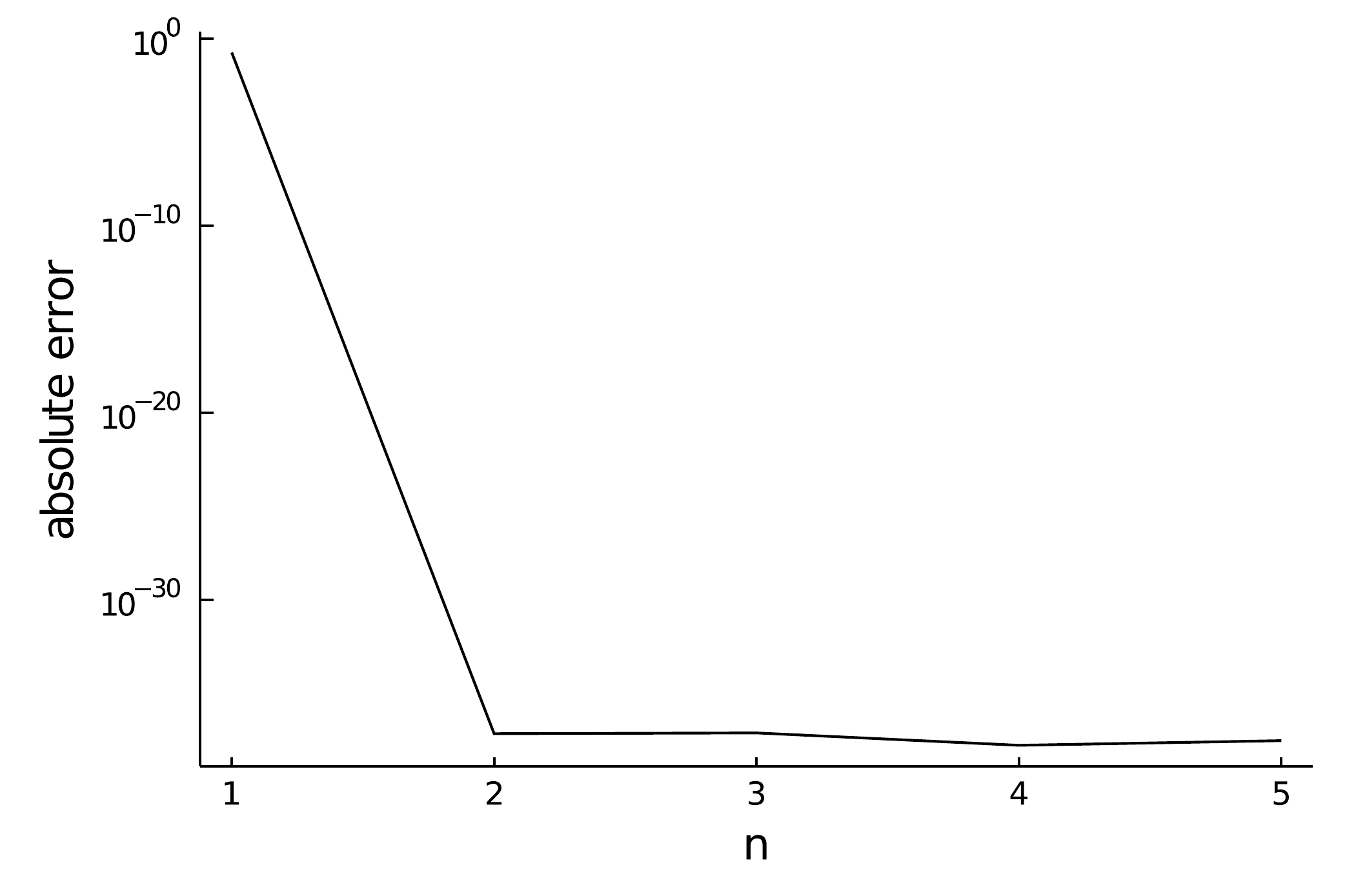}
    \caption{Semi-logarithmic convergence plot of maximum absolute error of the obtained measure for the example problem $(\alpha, \beta, d, M)  = (4, -3.9, 6, 2)$ compared to the analytic solution in Eq. \eqref{eq:a4measure}. Due to the nature of the solution for these special cases, our method almost instantly converges to arbitrary accuracy which may be increased via the chosen precision of arbitrary floating point arithmetic.}%
    \label{fig:analyticconvergence}%
\end{figure}

\begin{figure}
     \subfloat[$(\alpha, \beta, d, M)  = (2, -1.2, 3, 1)$]
    {{ \centering \includegraphics[width=6.1cm]{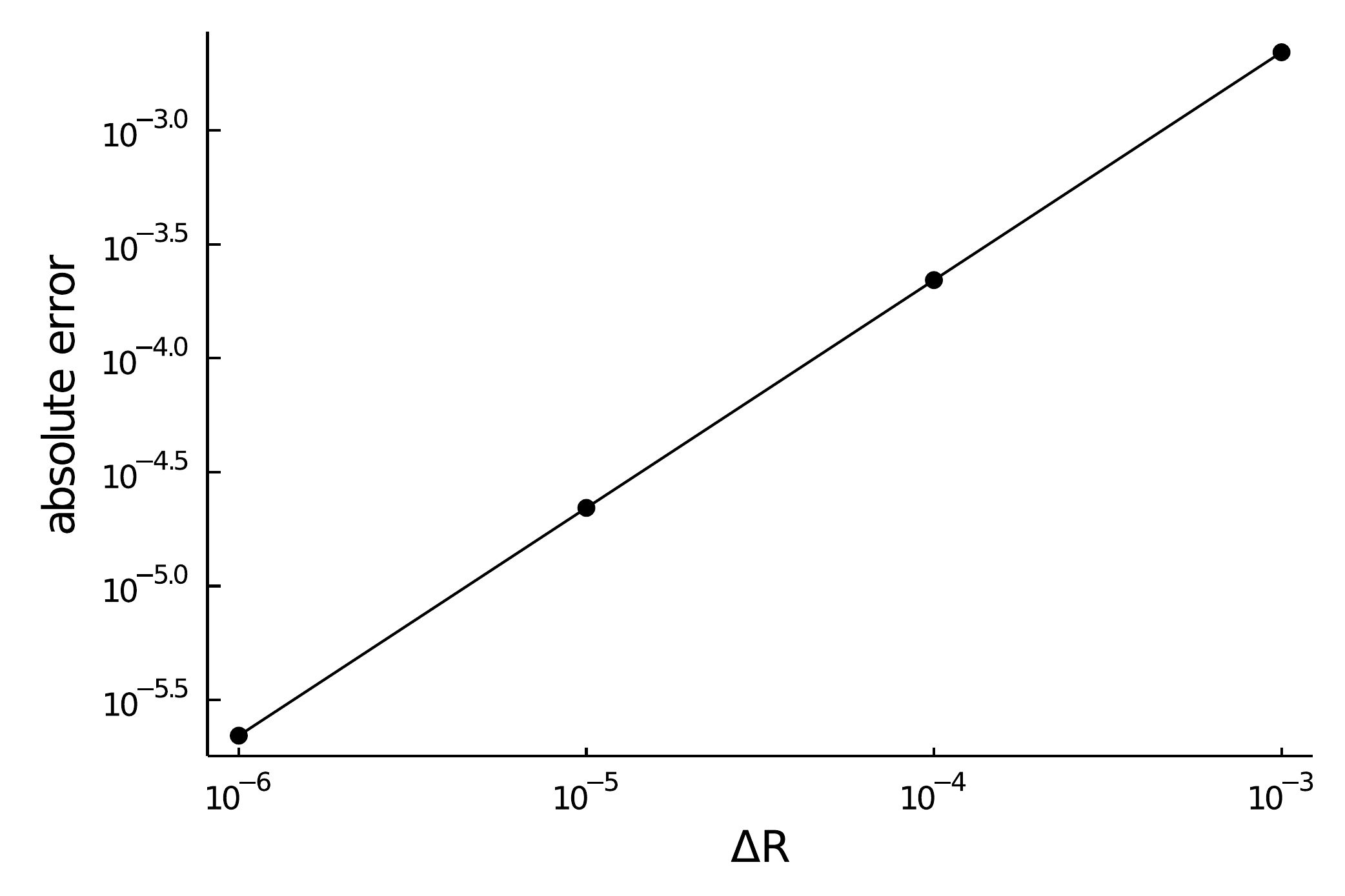} }}
         \subfloat[$(\alpha, \beta, d, M)  = (4, -3.7, 6, 2)$]
    {{ \centering \includegraphics[width=6.1cm]{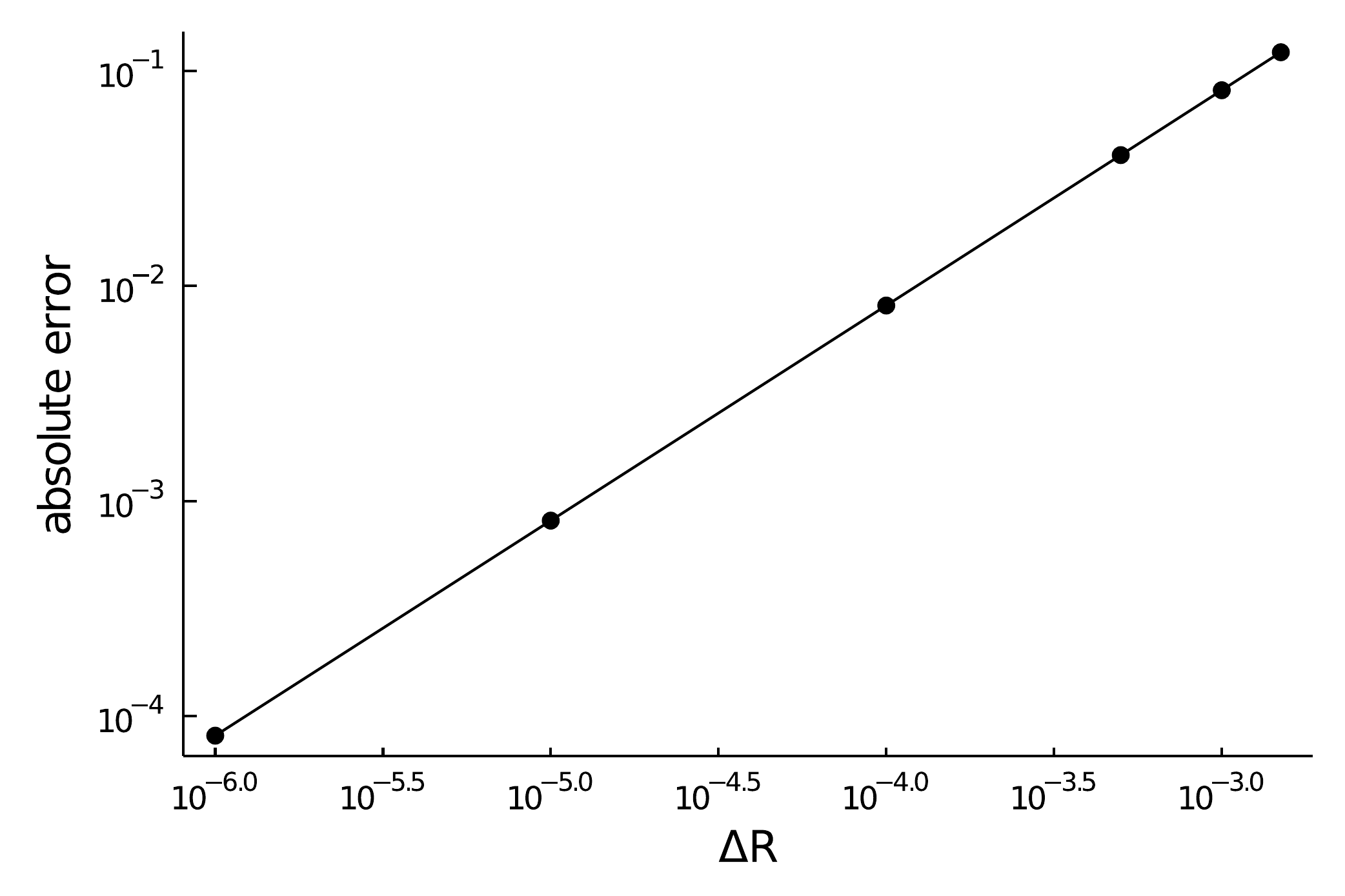} }}
    \caption{Logarithmic plots of the absolute deviation from the analytic solutions when computing the measure for the perturbed radius $R+\Delta R$, with $\Delta R$ on the $x$-axis. To obtain this error, the measures are compared in their normalized form on the unit ball. This shows the linear dependence of our method's accuracy on the chosen convergence cut-off of the optimization method.}%
    \label{fig:radiusdelta}%
\end{figure}

\subsection{Numerical convergence properties and regularization}\label{sec:numericalconvergence}
As seen in the previous section, convergence tests with known analytic solutions result in almost immediate convergence, making it difficult to sensibly visualize the properties of our method. We thus supplement the previous discussion with two example problems with no known analytic solution chosen to not fall into the well-behaved special cases. We furthermore explore the effect of Tikhonov regularization on the stability of the method. \\
A natural way to think about convergence for solutions computed in terms of polynomial expansions is to measure whether and how fast the coefficients of the computed solution decay as the order of approximation increases. For particularly well-behaved functions the decay of such coefficients is exponential. In cases where only a single power law integral with an external potential are present we obtain exponential convergence in $n$ as the right hand side hypergeometric functions are just polynomials when choosing the appropriate basis. Likewise, in the special attractive-repulsive case in which one of the powers is an even integer, the right-hand side of both power law integrals can simultaneously be brought into polynomial form, again leading to exponential convergence.\\
\begin{figure}
     \subfloat[$(\alpha, \beta, d)  = (\pi, -\frac{4\pi}{5}, 5)$]
    {{ \centering \includegraphics[width=4.1cm]{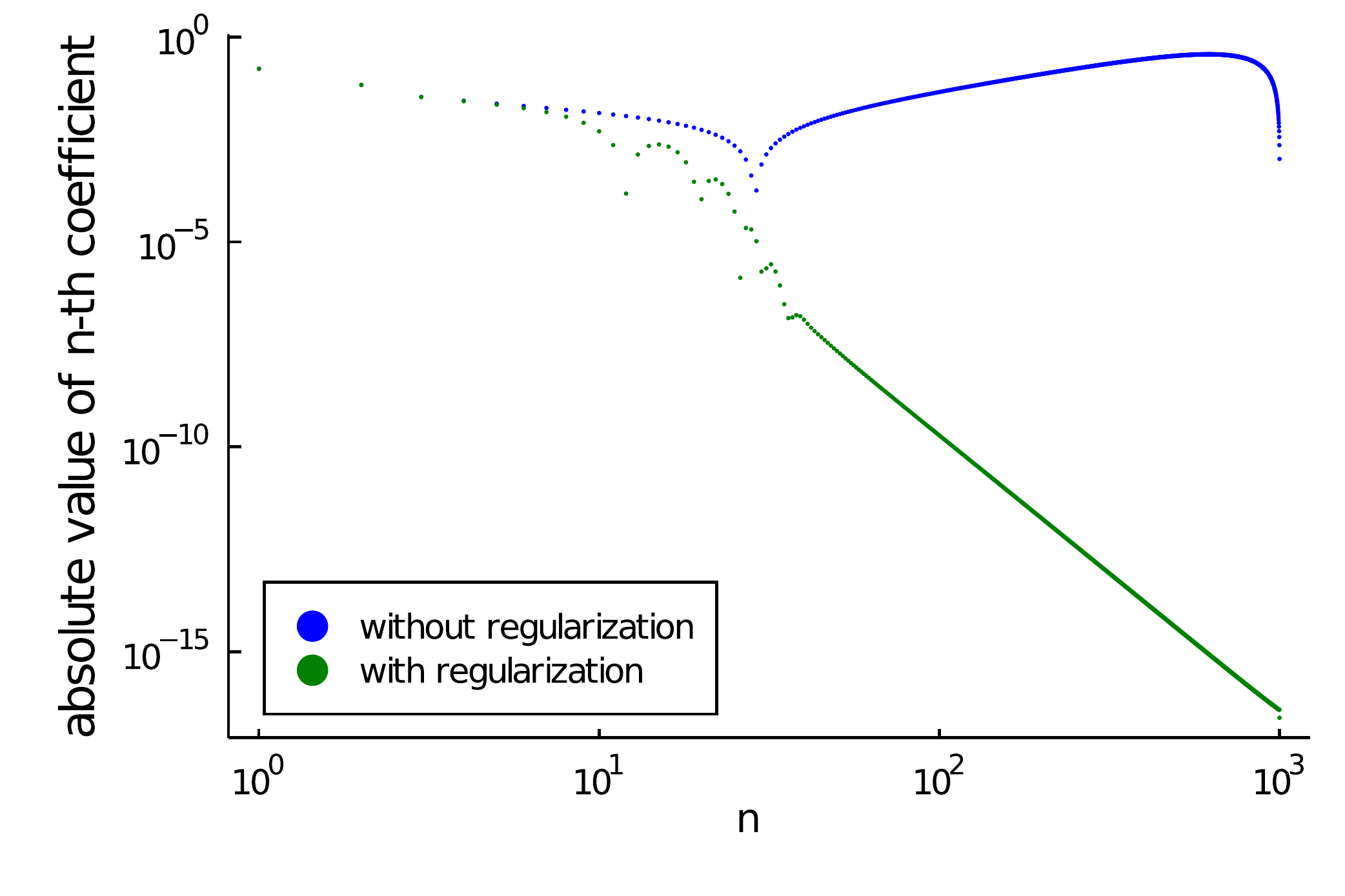} }}
         \subfloat[$(\alpha, \beta, d)  = (1, 0.3, 2)$]
    {{ \centering \includegraphics[width=4.1cm]{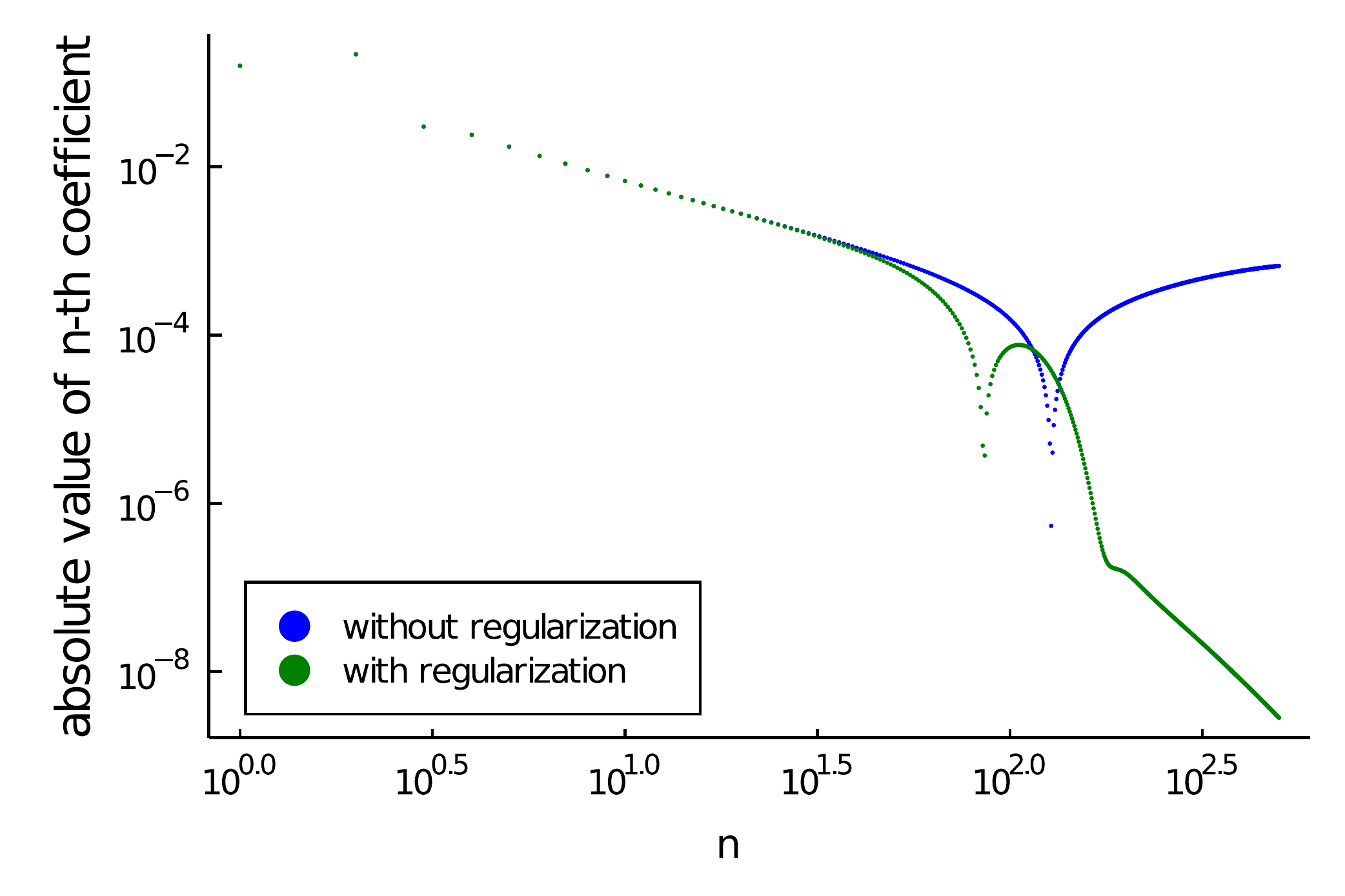} }}
             \subfloat[measures in (b) near origin]
    {{ \centering \includegraphics[width=4.1cm]{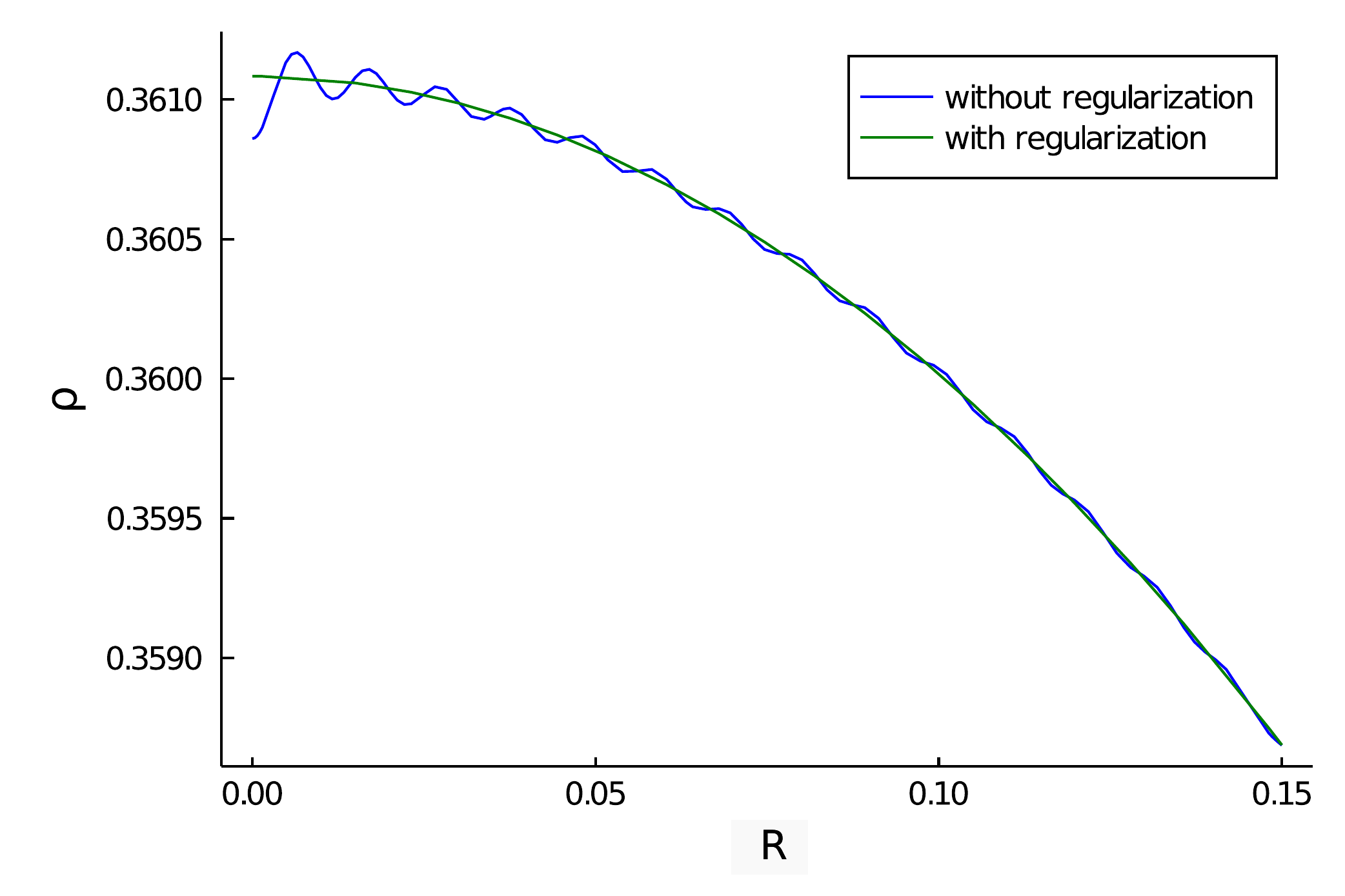} }}
    \caption{(a) and (b) show absolute value of the $n$-th coefficient in the computed solutions of the indicated equilibrium measure problems with and without using Tikhonov regularization. The observed instability in the coefficients without regularization in (b) is realized in the measure as incorrect oscillations near the origin, as seen in the zoomed-in segment in (c).}%
    \label{fig:coeffconvergence}%
\end{figure}
The convergence behaviour of the coefficients computed for $\rho$ may be changed by choosing a different basis under the integrals, in particular via Remark \ref{rem:genericweightparam}. The natural choices are those which lead to one of the two operators being banded such that the bandwidth is minimized. In Figure \ref{fig:coeffconvergence}(a) and (b) we plot the absolute value of the coefficients for two generic examples for the regularized and non-regularized methods. The regularized method as well as the early behaviour of the non-regularized method shows linear convergence in the coefficients for the generic attractive-repulsive. For large $n$, the instability of the coefficients in the non-regularized method is significant enough to be apparent in the resulting measures themselves. As seen in the example in Figure \ref{fig:coeffconvergence}(c) this instability causes incorrect oscillations to appear near the origin which grow with the order of approximation. These oscillations are not observed in the regularized method which converges consistently.\\
The linear decay behaviour of the coefficients with the regularized method as seen in Figure \ref{fig:coeffconvergence} is expected due to the mismatch of the singularity of the Jacobi polynomial basis with the singularity of the kernel in the generic attractive-repulsive case. In the above-mentioned cases showing exponential convergence we are able to exactly resolve the analytic singularities.\\
Finally, it should be noted that our method via Theorem \ref{theorem:hypergeomgeneral} and \ref{cor:generalrecurrence} does not inherently require the basis on the right hand side to be a particular set of Jacobi polynomials. As long as the operator mapping is accurately and consistently taken care of using the above results, the measure $\rho$ will be obtained in a weighted Jacobi polynomial basis with any choice of right-hand side basis functions. As the basis in which the measure $\rho$ is obtained does not change when doing this, only the computational efficiency in generating the operators may be affected by changing the right-hand side basis, not the $n$-convergence rate of the coefficients of $\rho$ itself.

\subsection{Comparison with particle swarm simulations}
As mentioned in the introduction, the current go-to methods for numerically approximating equilibrium measure solutions are particle swarm simulations, see e.g. \cite{carrillo2010particle} where this approach and the observed phenomenon of gap formation for certain parameter ranges are explored. In the interesting parameter ranges, these simulations require some thousands to ten thousands of particles in one-dimension to meaningfully converge. Already in two dimensions, particle swarm simulations are close to the limit of what is computationally feasible. In contrast, our method converges independent of the dimension, which allows investigating problems far out of reach of the conventional methods. In this section, we compare the results of our method with a standard version of these particle swarm simulations in two dimensions. We also present some comparisons with lower density particle simulations in three dimensions, which have to be interpreted with care. In all of these cases, the results of our method are consistent with what is obtained via particle simulations.\\
In Figure \ref{fig:particleanalyticcompare} we compare the particle solution to our spectral method for an example intentionally chosen to be a special case in which an analytic solution is also known via (\ref{eq:a2radius}-\ref{eq:a2measure}), allowing a three-way comparison via Figure  \ref{fig:analyticdiskcompare}. We present a further generic case one-dimensional and two dimensional example in Figure \ref{fig:particlecomparegeneric}, without known analytic solutions.
\begin{figure}
\centering
     \subfloat[$(\alpha, \beta, d)  = (2, -0.44, 2)$]
    {{ \centering \includegraphics[width=6cm]{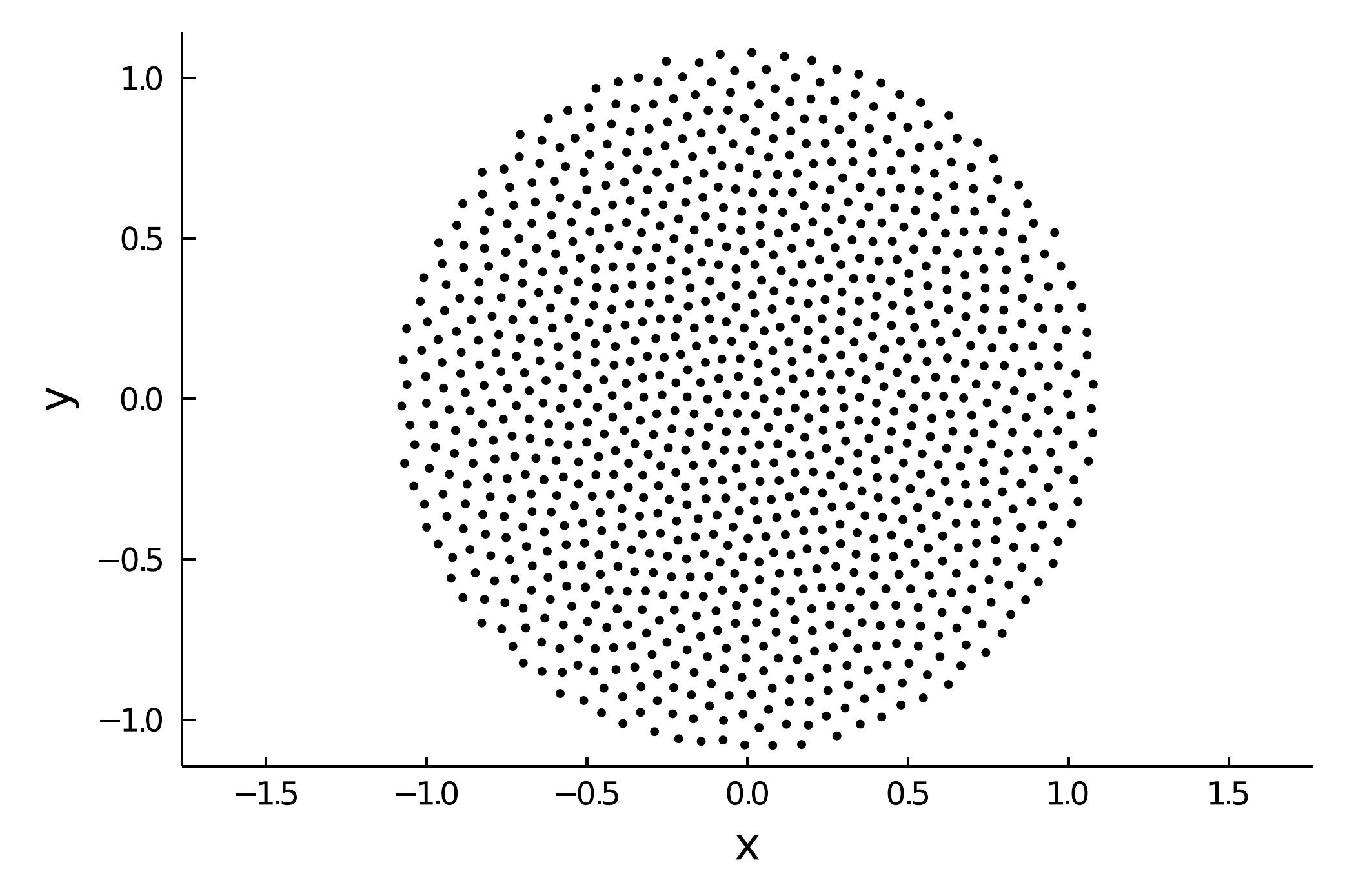} }}
         \subfloat[2D histogram based on particles in (a)]
    {{ \centering \includegraphics[width=6.1cm]{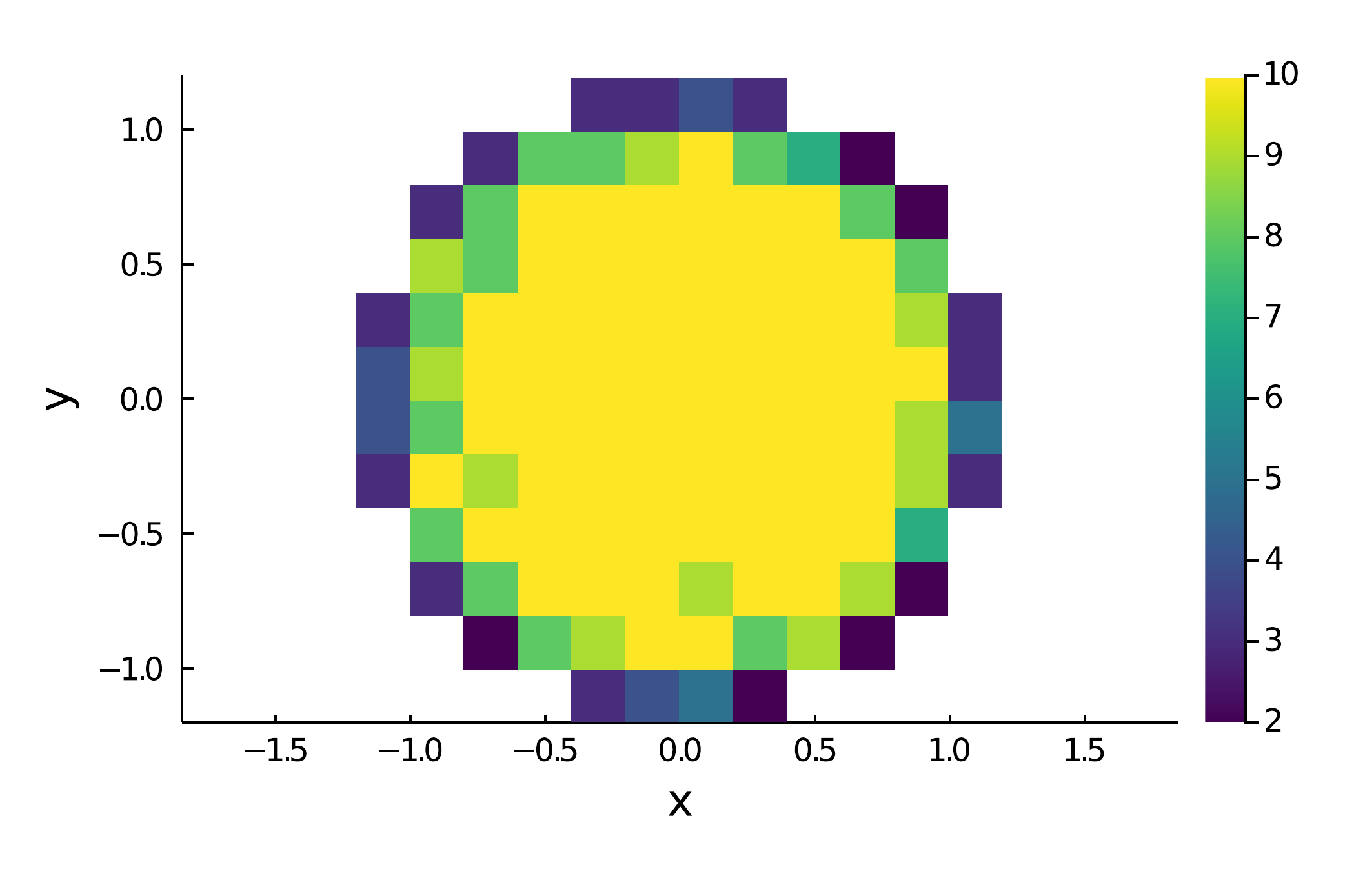} }}
    \caption{(a) shows the results of a particle simulation for the indicated equilibrium measure problem with $1000$ initially randomly distributed particles. (b) shows a two-dimensional histogram approximating the density based on the particle simulation. Note that the problem parameters here are intentionally the same as in Figure \ref{fig:analyticdiskcompare} and that an analytic solution is known in this special case.}%
    \label{fig:particleanalyticcompare}%
\end{figure}

\begin{figure}
\centering
     \subfloat[$(\alpha, \beta, d)  = (1.3, 1.1, 2)$]
    {{ \centering \includegraphics[width=4cm]{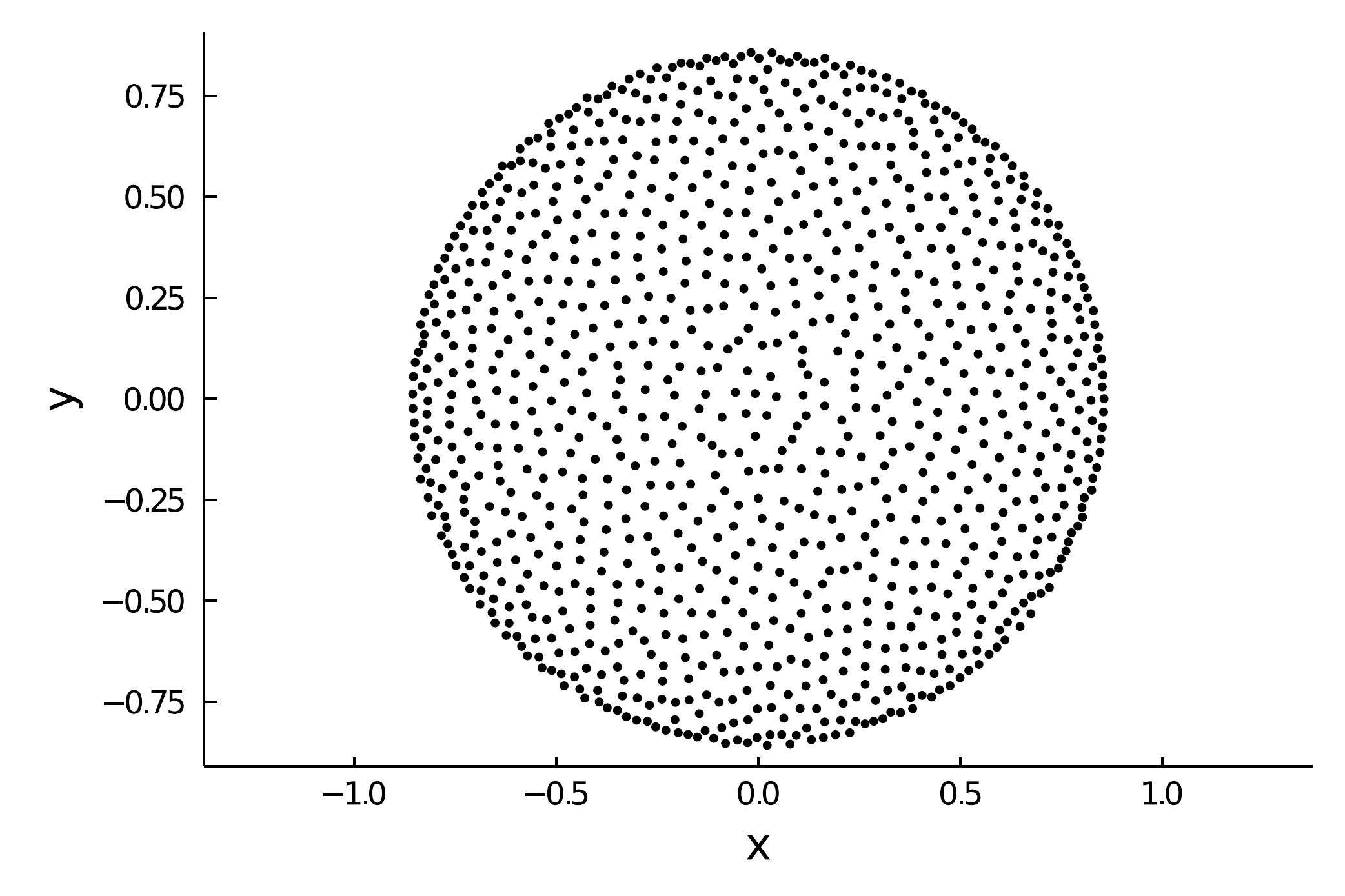} }}
         \subfloat[2D histogram based on (a)]
    {{ \centering \includegraphics[width=4.2cm]{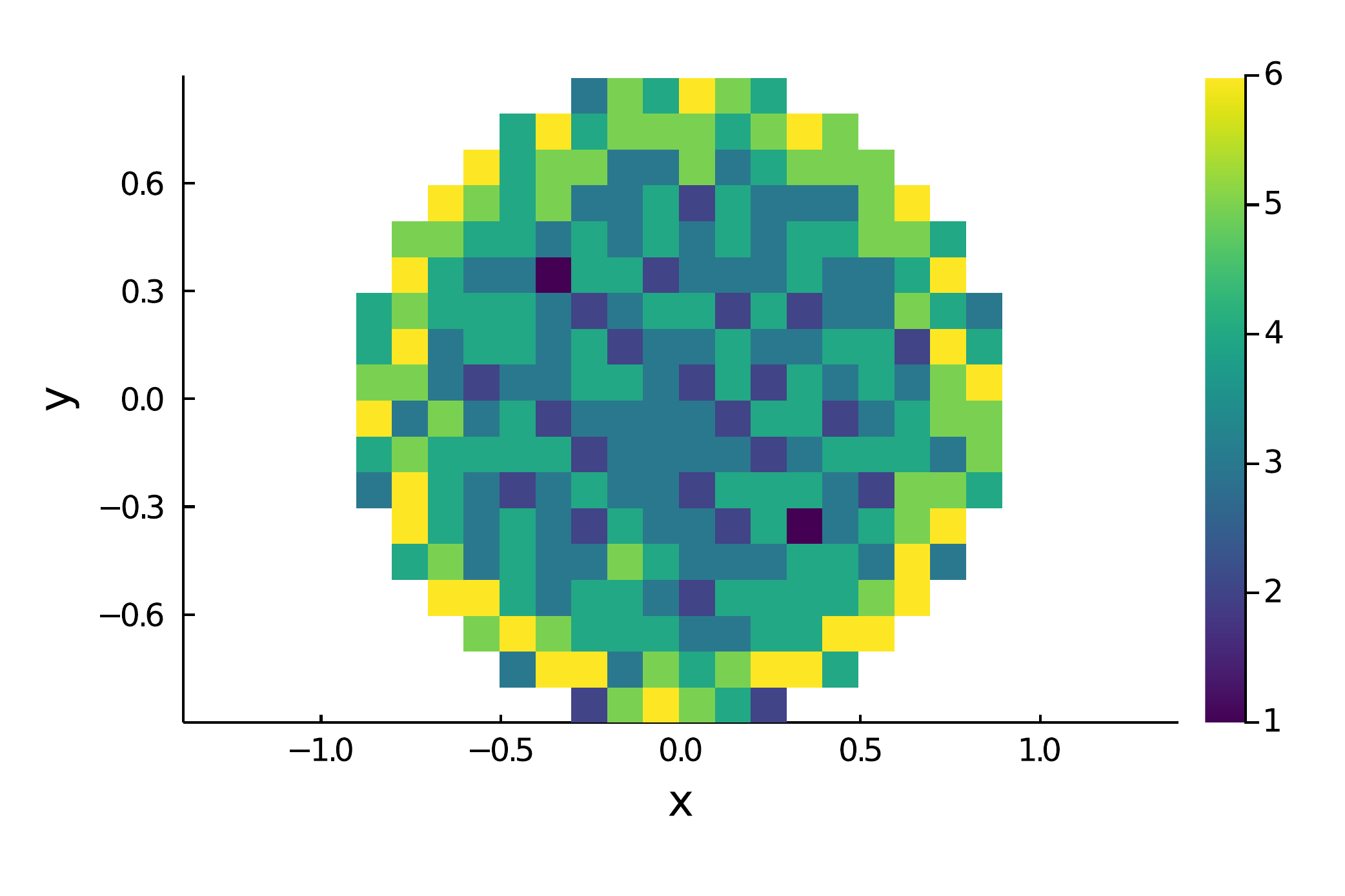} }}
             \subfloat[computed measure]
    {{ \centering \includegraphics[width=4.1cm]{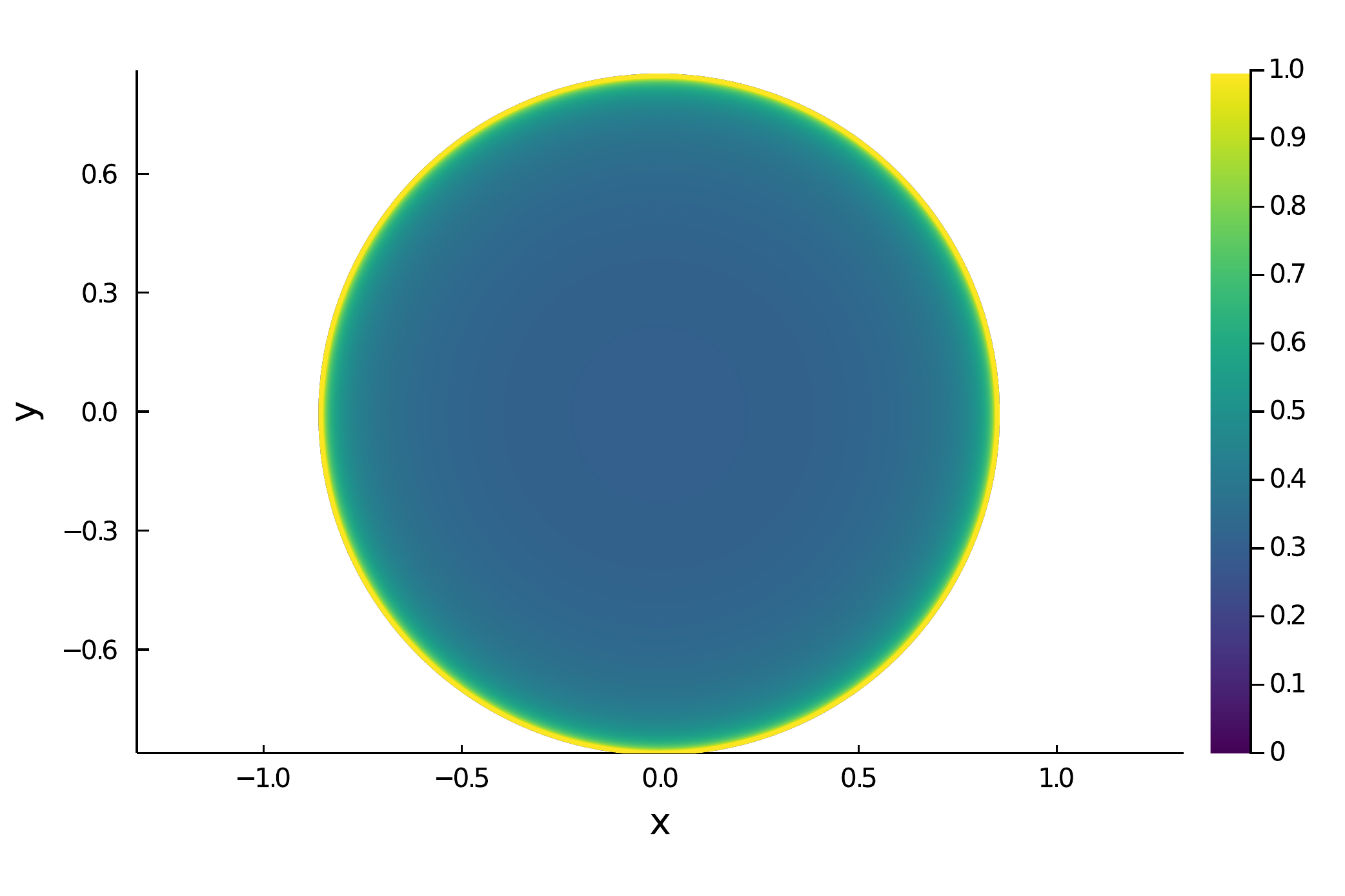} }}
    \caption{(a) shows the results of a particle simulation for the indicated equilibrium measure problem with $1000$ initially randomly distributed particles. (b) shows a two-dimensional histogram approximating the density based on the particle simulation. (c) shows the computed equilibrium measure using the method we introduced in this paper.}%
    \label{fig:particlecomparegeneric}%
\end{figure}

\subsection{Tracing the gap formation boundary}
It has been observed before \cite{balague2013dimensionality,carrillo_explicit_2017,gutleb_computing_2020} that for certain values of $\alpha$ and $\beta$, the assumption of the support of the equilibrium measure being a ball breaks down. For example, when $\alpha=4$, and $\beta > \frac{2+2d-d^2}{d+1}$ the analytic solution when assuming ball 
shaped support shows negative values at the origin \cite{carrillo_explicit_2017}, making the solution inadmissible. This gap formation phenomenon has been confirmed more generally by particle simulations in low dimensions \cite{balague2013dimensionality} and recently via an ultraspherical spectral method in one-dimension in \cite{gutleb_computing_2020}. In Figure \ref{fig:particlevoid} we show an example of this gap formation phenomenon in a two dimensional particle simulation. In \cite{gutleb_computing_2020} the authors developed a two interval approach method, which was able to obtain admissible solutions beyond the gap formation boundary. Likewise, it is expected that in higher dimensions the correct support assumption for the measure is an annulus in two dimensions and hyperspherical shells in higher dimensions. While in one-dimension this problem can be solved with a two interval approach, this is significantly more complicated in higher dimensions as hyperspherical shells are not simply comprised of two $d$-dimensional balls. \\
\begin{figure}
 \centering 
 \includegraphics[width=7.1cm]{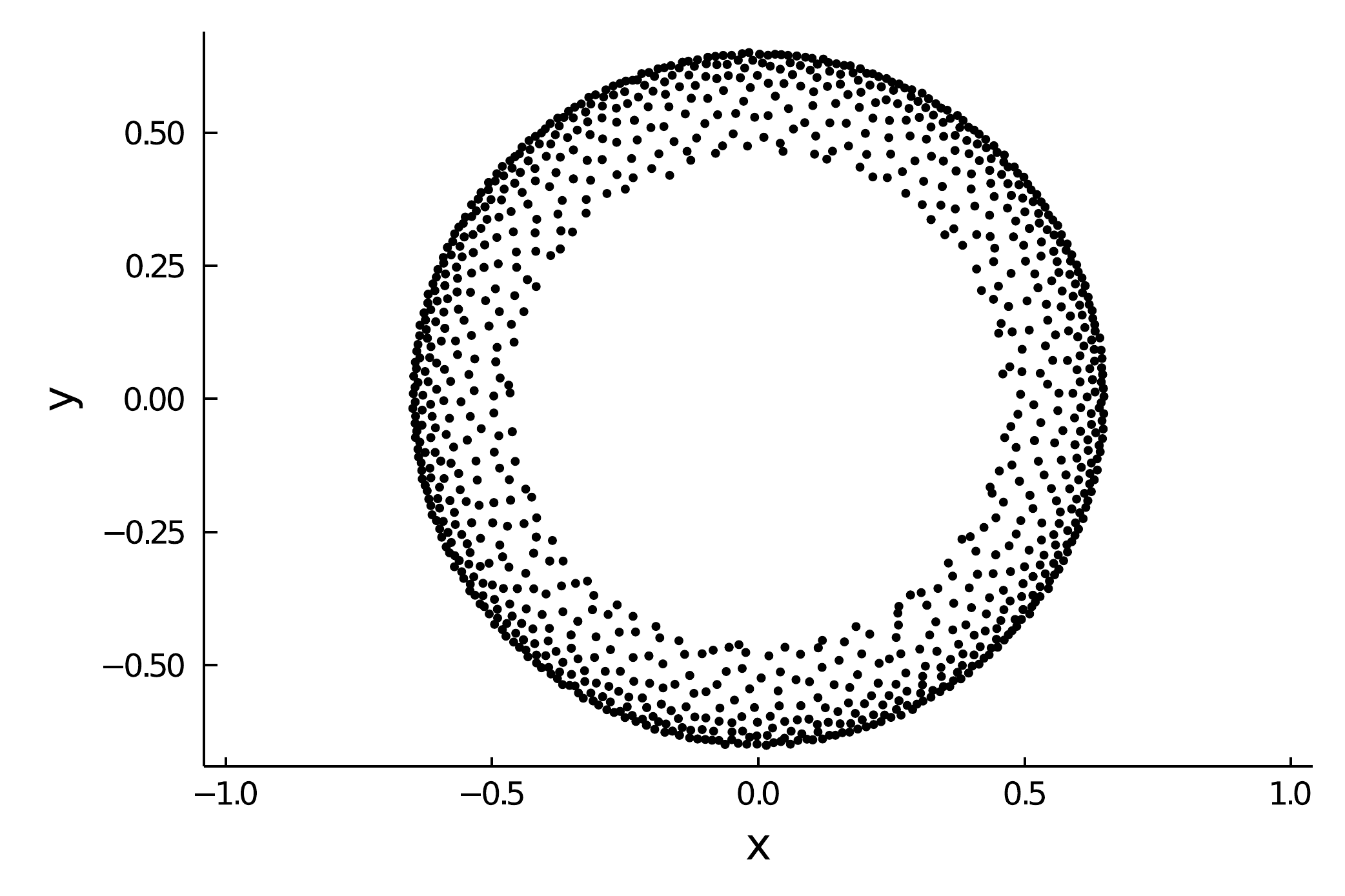}
    \caption{Particle simulation with $1000$ particles for the equilibrium measure problem with parameters $(\alpha, \beta, d, M) = (4.2, 0.85, 2, 1)$ showing an example of the gap or void formation behaviour around the origin previously observed in \cite{balague2013dimensionality,carrillo_explicit_2017,gutleb_computing_2020}.}%
    \label{fig:particlevoid}%
\end{figure}
As is, the method proposed in this paper can thus not solve equilibrium measure problems past the gap formation boundary, although we intend to work on an extension of this approach for that case in the future. The present method can, however, help with understanding the shape of said gap formation boundary similarly to what was done in \cite{gutleb_computing_2020} in one-dimension, as the obtained measures will begin to show negative values at the origin. While the boundary point is known for a very limited number of special cases, such as $\alpha=4$ mentioned above, the general form of this boundary is presently not understood and is very difficult to impossible to explore with particle simulations even in just one or two dimensions due to slow convergence in the regions of low density around the origin immediately preceeding gap formation. In Figure \ref{fig:voidcontours} we show an exploration of the boundary using our method which can be performed in any dimension.

\begin{figure}
\centering
     \subfloat[$d = 2$]
    {{ \centering \includegraphics[width=6cm]{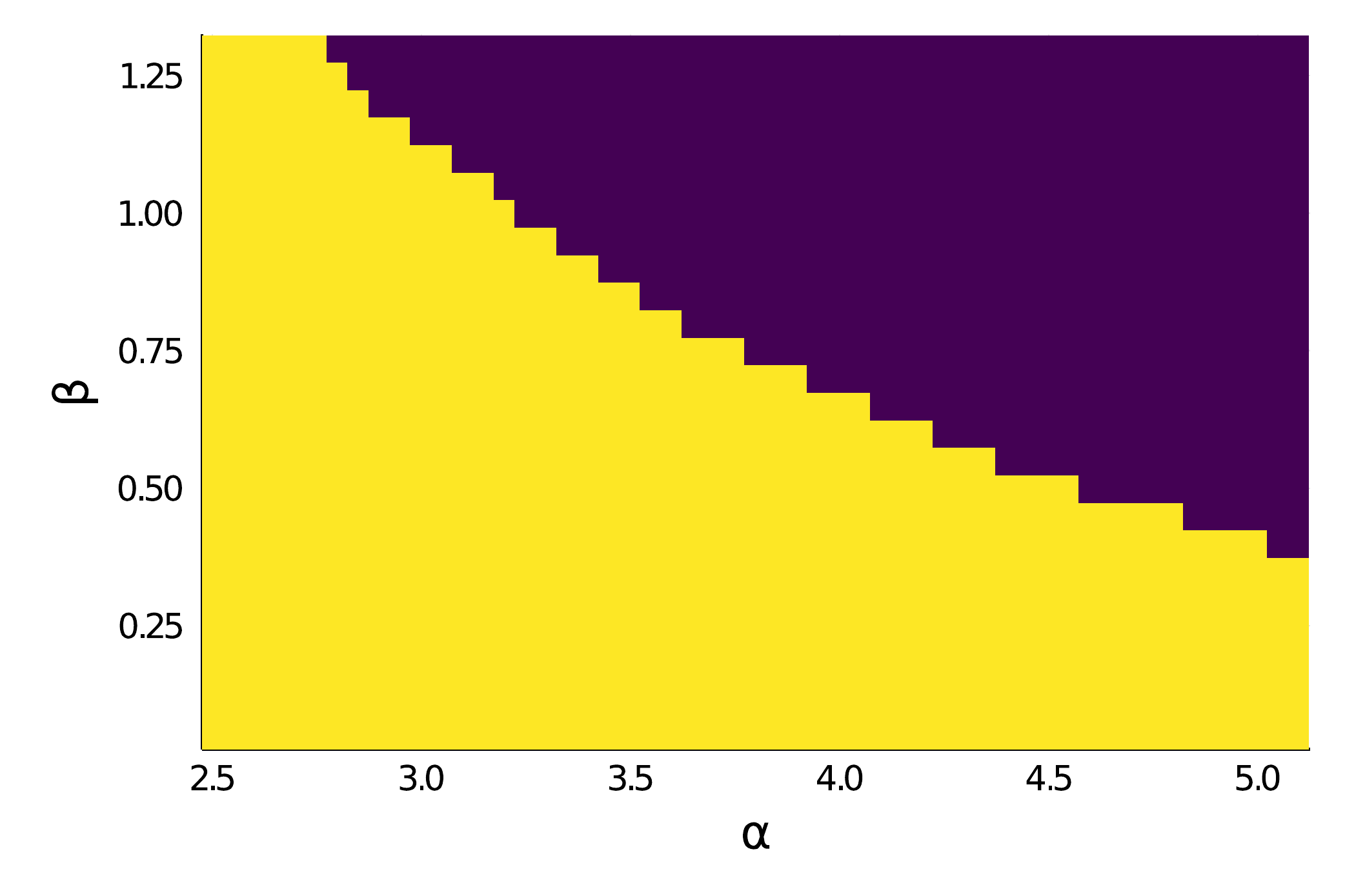} }}
         \subfloat[$d = 3$]
    {{ \centering \includegraphics[width=6.1cm]{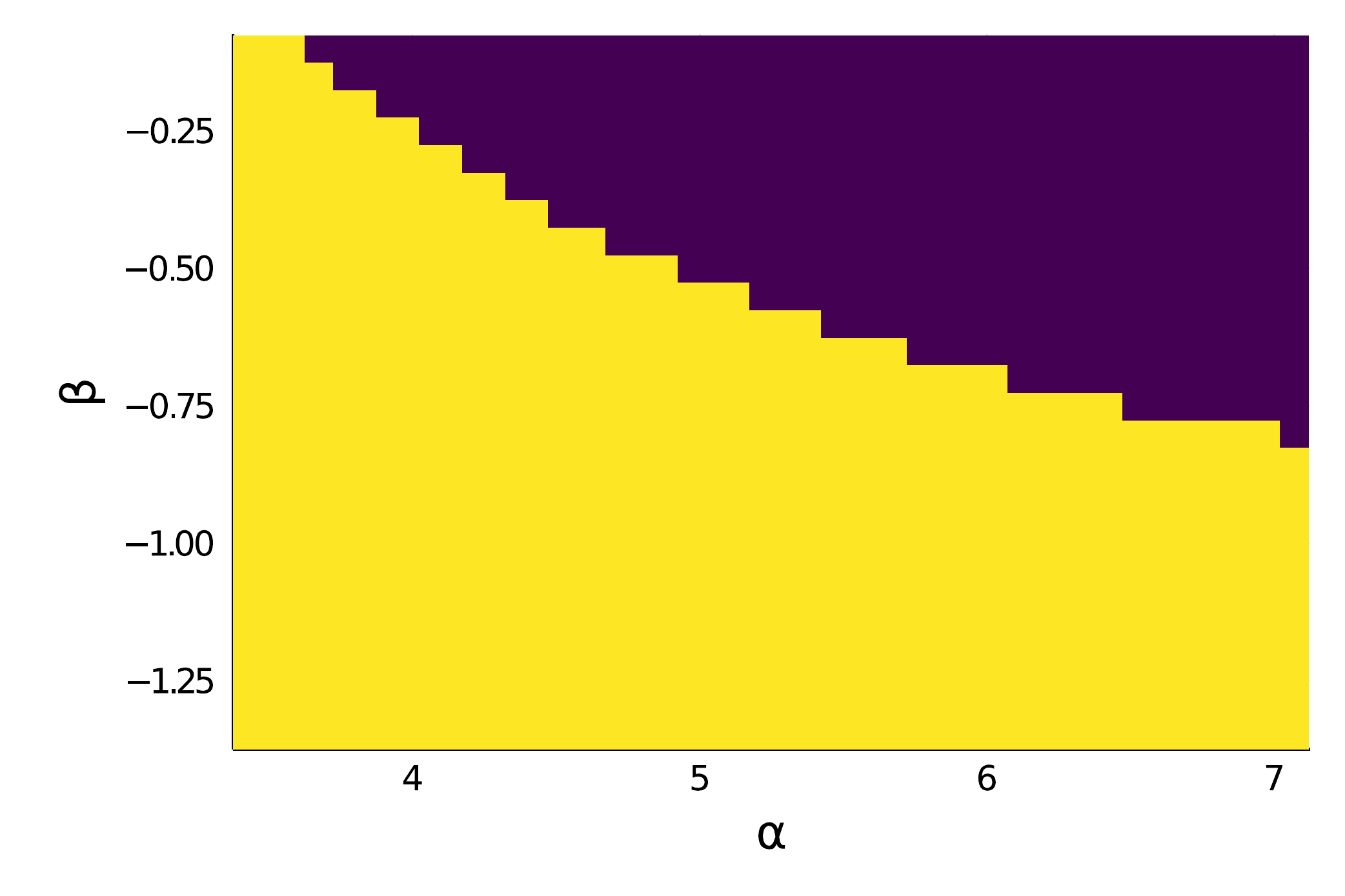} }}
    \caption{Visualization of parts of the boundary in values of $\alpha$ and $\beta$ for which no single ball non-negative measures can be found in two and three dimensions, cf. the one-dimensional equivalent discussed in \cite{gutleb_computing_2020}. Bright areas indicate existence of single ball non-negative measures, while dark areas indicate that no such measure could be found. The resolution of the measure search and heatmap was set to steps of $0.05$ but may be performed at arbitrary precision.}%
    \label{fig:voidcontours}%
\end{figure}

\section{Discussion} \label{sec:discussion}
In this paper, we have introduced a new approach to numerically solving power law equilibrium measures in arbitrary dimension for the case of ball-shaped support. Our method is based on both existing as well as to our knowledge new recurrence results for radial Jacobi polynomials and Gaussian hypergeometric functions under the action of Riesz potentials. Aspects of these results, such as Theorem \ref{theorem:hypergeomgeneral} may eventually be useful in analytic or computer-based proof aproaches to power law equilibrium measure problems, cf. \cite{carrillo_explicit_2017}. In constrast to the conventional approach -- particle simulations -- our method's complexity is independent of the dimension $d$, making convergent arbitrary precision arithmetic computations possible in any dimension and for all parameter ranges where the assumption of ball-shaped support holds. Our method also allows the exploration of where this assumption breaks down, by tracing the gap formation boundary for parameters for which negative values around the origin appear. Future research will generalize this approach to solve equilibrium measure problems past the gap formation boundary on annuli and hyperspherical shells.

\section*{Acknowledgments}

The authors were partially supported by the EPSRC grant number EP/T022132/1.
JAC was supported the Advanced Grant Nonlocal-CPD (Nonlocal PDEs for Complex Particle Dynamics: 	Phase Transitions, Patterns and Synchronization) of the European Research Council Executive Agency (ERC) under the European Union's Horizon 2020 research and innovation programme (grant agreement No. 883363). SO was supported by the Leverhulme Trust Research Project Grant RPG-2019-144.

\section{Appendices}
\subsection{A -- Extension of a Dyda-Kuznetsov-Kwaśnicki formula for fractional Laplacians}\label{app:dydaformula}
As mentioned in section \ref{sec:laplace} Dyda, Kuznetsov and Kwaśnicki proved that certain weighted Jacobi polynomials form a complete basis for the eigenfunctions of the fractional Laplacian on the $d$-dimensional unit ball \cite{dyda_fractional_2017}. We stated their result in \eqref{eq:eigenjacobi}. The domain of validity they prove for their formula is $\gamma>0$ but as we will show in this section, the formula also holds in the range $\gamma \in (-2,0)$. To prove this, we follow effectively the same proof Dyda, Kuznetsov and Kwaśnicki used, but at a critical point use a different theorem of theirs. The theorem we prove in this section is thus effectively an extension of Theorem 3 in \cite{dyda_fractional_2017}.\\
As in \cite{dyda_fractional_2017}, in the following $V_{l,m}(x)$ denotes a linear basis of the finite dimensional space with dimension
\begin{align*}
M_{d,l} = \frac{d+2l-2}{d+l-2} \begin{pmatrix}
2+l-2\\l
\end{pmatrix}
\end{align*}
spanned by solid harmonic polynomials with degree $l\geq 0$. See section \ref{sec:spectralondisk} as well as \cite{dyda_fractional_2017,dunkl_orthogonal_2014} for more details.
\begin{theorem}
Assume that $\gamma \in (-2,0)$, $l,n \geq 0$ and $1\leq m \leq M_{d,l}$. Then
\begin{align*}
(-\Delta)^{\frac{\gamma}{2}} (1-|x|^2)^{\frac{\gamma}{2}}&V_{l,m}(x)P_n^{\left(\frac{\gamma}{2},\frac{d-2}{2}+l\right)}(2|x|^2-1) \\&= \tfrac{2^\gamma \Gamma\left(1+\frac{\gamma}{2}+n \right)\Gamma \left( \frac{\delta+\alpha}{2}+n \right)}{n! \Gamma \left( \frac{\delta}{2}+n \right)}V_{l,m}(x)P_n^{\left(\frac{\gamma}{2},\frac{d-2}{2}+l\right)}(2|x|^2-1),
\end{align*}
for all $x$ such that $|x|<1$ and where $\delta := d+2l$.
\end{theorem}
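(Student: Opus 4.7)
The plan is to mimic the derivation of Theorem 3 in Dyda--Kuznetsov--Kwa\'snicki, but to replace the step that uses $\gamma>0$ with a Meijer $G$ reduction appropriate to $\gamma\in(-2,0)$. Concretely, start from the general transformation formula quoted in Section \ref{sec:laplace},
\begin{equation*}
(-\Delta)^{\gamma/2}\bigl[V(x)\,G^{m,n}_{p,q}(|x|^2;\mathbf{a};\mathbf{b})\bigr]
= 2^{\gamma}V(x)\,G^{m+1,n+1}_{p+2,q+2}\!\left(|x|^2;\begin{matrix}1-\tfrac{d+2l+\gamma}{2},\ \mathbf{a}-\tfrac{\gamma}{2},\ -\tfrac{\gamma}{2}\\ 0,\ \mathbf{b}-\tfrac{\gamma}{2},\ 1-\tfrac{d+2l}{2}\end{matrix}\right),
\end{equation*}
whose range of validity is $\gamma>-d$, which comfortably contains $(-2,0)$ for any $d\ge 1$. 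The idea is to feed into this formula the specific $G$-function representation of $(1-t)^{\gamma/2}P_n^{(\gamma/2,\delta/2-1)}(2t-1)$ at $t=|x|^2$, apply the transformation, then collapse the resulting $G^{m+1,n+1}_{p+2,q+2}$ back to the weighted Jacobi polynomial on the right-hand side.

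First I would use the hypergeometric representation \eqref{eq:jacobihyper} to write
\begin{equation*}
(1-t)^{\gamma/2}P_n^{(\gamma/2,\delta/2-1)}(2t-1) = \frac{(\delta/2)_n}{n!}(1-t)^{\gamma/2}\,\F(-n,n+\tfrac{\gamma+\delta}{2};\tfrac{\delta}{2};t),
\end{equation*}
and convert the Gauss hypergeometric into a canonical $G^{1,2}_{2,2}$ with parameters dictated by $-n$, $n+(\gamma+\delta)/2$, $\delta/2$, multiplied by the prefactor $(1-t)^{\gamma/2}$ (absorbed into the $\mathbf{a},\mathbf{b}$ data via a standard shift). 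Then I apply the DKK transformation; the output is a $G^{2,3}_{4,4}$ with explicit parameters depending on $\gamma,\delta,n$. For $\gamma>0$, DKK simplify this by matching an upper and lower parameter so that the $G^{2,3}_{4,4}$ drops to a $G^{1,2}_{3,3}$ and ultimately to a $\,_3F_2$ which, after a Saalsch\"utz-type application, collapses to the target weighted Jacobi polynomial.

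For $\gamma\in(-2,0)$ the same cancellation is available, but the pair of coinciding parameters is different because the role of $-\gamma/2$ and $1-(d+2l+\gamma)/2$ flips sign relative to the $\gamma>0$ regime; hence one appeals to the companion reduction lemma in \cite{dyda_fractional_2017} (the one used to handle the symmetric counterpart of Theorem 3 in that paper) instead of the one used in the original proof. After this substitution, the residue/contour argument that identifies the remaining $G$-function with a constant multiple of $(1-|x|^2)^{\gamma/2}V_{l,m}(x)P_n^{(\gamma/2,(d-2)/2+l)}(2|x|^2-1)$ proceeds verbatim, and the prefactor is computed by tracking the Gamma functions picked up in each reduction step, which collapse via $\Gamma(1+\gamma/2+n)\Gamma((\delta+\gamma)/2+n)/(n!\,\Gamma(\delta/2+n))$.

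The main obstacle is the bookkeeping in the Meijer $G$ reduction: one must verify that the specific parameter coincidence used to contract $G^{2,3}_{4,4}\to G^{1,2}_{3,3}$ is still valid for $\gamma\in(-2,0)$, in particular that no pole of a Gamma factor in the contour representation crosses the integration path when $\gamma$ becomes negative. This requires checking that the relevant Mellin--Barnes contour can be deformed continuously as $\gamma$ moves from $(0,\infty)$ to $(-2,0)$ without encountering singularities, which amounts to confirming that none of the parameter sums $\delta/2+n$, $1+\gamma/2+n$, $(\delta+\gamma)/2+n$ hit a non-positive integer in the stated range. Once this contour argument is secured, the eigenvalue identity and its constant follow exactly as in the $\gamma>0$ case.
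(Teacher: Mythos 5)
Your proposal follows essentially the same route as the paper's Appendix A: represent the weighted Jacobi polynomial as a Meijer $G$-function, apply the Dyda--Kuznetsov--Kwa\'snicki transformation in the regime where $\gamma$ is negative (the paper makes this precise by invoking their Theorem 1 for Riesz potentials, valid for $\gamma\in(-d,0)$, in place of their Theorem 2 which requires $\gamma>0$), and contract the resulting $G$-function back to a weighted Jacobi polynomial via parameter cancellation. The only notable difference is that the paper's restriction to $\gamma\in(-2,0)$ arises simply from requiring the Jacobi parameter $\gamma/2>-1$ at the final conversion step, rather than from the contour-deformation check you describe.
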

\begin{proof}
We begin by following the same proof structure as in \cite[Theorem 3]{dyda_fractional_2017}: We write $V(x)=V_{l,m}(x)$. Then by the explicit hypergeometric function representation of the shifted radial Jacobi polynomials we find
\begin{align*}
\frac{n!}{\Gamma(1+\frac{\gamma}{2}+n)} (1-|x|^2)^{\frac{\gamma}{2}}&V(x)P_n^{\left(\frac{\gamma}{2},\frac{d-2}{2}+l\right)}(2|x|^2-1) \\&= V(x)(1-|x|^2)^{\frac{\gamma}{2}} {}_2 F_1\left(\begin{matrix}-n,\quad \frac{\delta+\gamma}{2}+n \\ 1+\frac{\gamma}{2}\end{matrix};1-|x|^2 \right)
\\&= V(x) G^{2,0}_{2,2}\Big( 
\,|x|^2;\begin{matrix}
1+\tfrac{\gamma}{2}+n, & 1-\tfrac{\delta}{2}-n \\ 1-\frac{\delta}{2}, &0
\end{matrix}\,
\Big ),
\\&= (-1)^n V(x)G^{1,1}_{2,2}\Big( 
\,|x|^2;\begin{matrix}
1-\tfrac{\delta}{2}-n, & 1+\tfrac{\gamma}{2}+n \\ 0, &1-\frac{\delta}{2}
\end{matrix}\ 
\Big ).
\end{align*}
Above, the second and third equalities follow from known relations for the Meijer-G function, see \cite[(8.4.49.22)]{prudnikov2003integrals} and \cite[Equation (51)]{dyda_fractional_2017}. At this point Dyda, Kuznetsov and Kwaśnicki apply the fractional Laplacian $(-\Delta)^{\frac{\gamma}{2}}$ and use Theorem $2$ in their paper which has range of validity $\gamma>0$ to ultimately arrive at \eqref{eq:eigenjacobi}. Instead of doing this, we will instead use Theorem $1$ from their paper, which is an analogous result to their Theorem $2$ but for Riesz potentials and has range of validity $\gamma \in (-d,0)$. Using \cite[Theorem 1]{dyda_fractional_2017} in the first step and then applying cancellation formulas for the Meijer-G function, cf. \cite[Equation (21--22)]{dyda_fractional_2017} yields
\begin{align*}
(-\Delta)^{\frac{\gamma}{2}} &\frac{n!}{\Gamma(1+\frac{\gamma}{2}+n)} (1-|x|^2)^{\frac{\gamma}{2}}V(x)P_n^{\left(\frac{\gamma}{2},\frac{d-2}{2}+l\right)}(2|x|^2-1)\\
&= (-1)^n 2^\gamma V(x)G^{2,2}_{4,4}\Big( 
\,|x|^2;\begin{matrix}1-\tfrac{\delta+\gamma}{2}, &
1-\tfrac{\delta+\gamma}{2}-n, & 1+n, & -\tfrac{\gamma}{2} \\0, & -\tfrac{\gamma}{2}, &1-\frac{\delta+\gamma}{2}, & 1-\tfrac{\delta}{2}
\end{matrix}\,
\Big )\\
&=(-1)^n 2^\gamma V(x)G^{1,1}_{2,2}\Big( 
\,|x|^2;\begin{matrix}
1-\tfrac{\delta+\gamma}{2}-n, & 1+n \\ 0, &1-\frac{\delta}{2}
\end{matrix}\,
\Big )\\
&=\tfrac{(-1)^n 2^\gamma V(x) \Gamma \left( \frac{\delta+\gamma}{2}+n \right)}{n!} {}_2 F_1\left(\begin{matrix}\tfrac{\delta+\gamma}{2}+n, \quad -n \\ \tfrac{\delta}{2} \end{matrix} ; |x|^2 \right),
\end{align*}
which is valid for $|x|<1$. Multiplying with the appropriate constants to simplify the left-hand side and converting the hypergeometric function into Jacobi polynomial form then finally yields the stated result. This last step restricts us to $\gamma \in (-2,0)$ for classical Jacobi polynomials which require both parameters to be greater than $-1$.
\end{proof}
\begin{remark}
Just as noted for \cite[Theorem 3]{dyda_fractional_2017} for the fractional Laplacian, this result implies that the stated Jacobi polynomials form a complete orthogonal system of eigenfunctions for the weighted Riesz operator in the stated ranges.
\end{remark}

\subsection{B -- Derived properties of radially shifted Jacobi polynomials}\label{app:radialjacobi}
For ease of reference, this section concisely lists some of the basic properties for the radial Jacobi polynomials $P_n^{(a,b)}(2|x|^2-1)$, where $|x|\in(0,1)$. They follow directly from the respective properties of the ordinary Jacobi polynomials, cf. \cite[18.9]{nist_2018}.
\subsubsection*{Classical recurrence relationship}
\begin{equation}\label{eq:radialjacobirecclassical}
P_{n+1}^{(a,b)}(2|x|^2-1)=(2A_{n}|x|^2+(B_{n}-A_{n}))P^{(a,b)}_{n}(2|x|^2-1)-C_{n}P^{(a,b)}_{n-1}(2|x|^2-1),
\end{equation}
with $A_n$, $B_n$ and $C_n$ as in section \ref{sec:jacobi}.
\subsubsection*{Explicit representations}
\begin{align}\label{eq:radialseriesrep}
P^{(a,b)}_{n}\left(2|x|^2-1\right)&= \sum_{k=0}^{n} (-1)^{n+k} \tfrac{{\left(n+a+%
b+1\right)_{k}}{\left(b+k+1\right)_{n-k}}}{k!\;(n-k)!}%
|x|^{2k},\\\nonumber
&=\tfrac{\Gamma (a+n+1)}{n!\,\Gamma (a+b+n+1)} \sum_{k=0}^n (-1)^k \binom{n}{k} \tfrac{\Gamma (a + b + n + k + 1)}{\Gamma (a + k + 1)} \left(1-|x|^2\right)^k,\\
&= \frac{(a+1)_n}{n!} {}_2F_1\left(\begin{matrix}-n,\quad n+a+b+1 \\ a+1 \end{matrix} ;1-|x|^2 \right)\nonumber \\
&= (-1)^n \frac{(b+1)_n}{n!} {}_2F_1\left( \begin{matrix} -n, \quad n+a+b+1 \\ b+1 \end{matrix} ;|x|^2 \right).\nonumber
\end{align}
\subsubsection*{Symmetry}
\begin{align}\label{eq:radialsymm}
P_n^{(a,b)}(1-2|x|^2) = (-1)^n P_n^{(b,a)}(2|x|^2-1).
\end{align}
\subsubsection*{Basis conversion}
\begin{align}\label{eq:radialbasisconversionjacobi1}
P^{(a,b)}_{n}\left(2|x|^2-1\right)=\tfrac{(n+a+b+1)}{(2n+a+b+1)}P^{(%
a+1,b)}_{n}\left(2|x|^2-1\right)-\tfrac{(n+b)}{(2n+a+b+1)}P^{(a+1,b)}_{n-1}\left(2|x|^2-1%
\right),\\
P^{(a,b)}_{n}\left(2|x|^2-1\right)=\tfrac{(n+a+b+1)}{(2n+a+b+1)}P^{(%
a,b+1)}_{n}\left(2|x|^2-1\right)+\tfrac{(n+a)}{(2n+a+b+1)}P^{(a,b+1)}_{n-1}\left(2|x|^2-1%
\right).\nonumber
\end{align}
\begin{align}
|x|^2P^{(a,b+1)}_{n}\left(2|x|^2-1%
\right)=K_n P^{(a,b)}_{n+1}\left(2|x|^2-1\right)+\tfrac{(n+b+1)}{2(n+\frac{a}{2}+\frac{b}{2}+1)}P^{(a,%
b)}_{n}\left(2|x|^2-1\right),\label{eq:radialjacobiraisingB}\\
(|x|^2-1)P^{(a+1,b)}_{n}\left(2|x|^2-1%
\right)=K_n P^{(a,b)}_{n+1}\left(2|x|^2-1\right)-\tfrac{(n+a+1)}{2(n+\frac{a}{2}+\frac{b}{2}+1)}P^{(a,%
b)}_{n}\left(2|x|^2-1\right),\nonumber
\end{align}
with $K_n = \tfrac{(n+1)}{2(n+\frac{a}{2}+\frac{b}{2}+1)}$.

\bibliographystyle{siamplain}
\bibliography{references}

\end{document}